\newtheorem{theorem}{Theorem}[section]
\newtheorem{lemma}[theorem]{Lemma}
\newtheorem{proposition}[theorem]{Proposition}
\newtheorem{claim}{Claim}
\theoremstyle{remark}
\newtheorem{remark}[theorem]{\it \bf{Remark}\/}
\numberwithin{equation}{section}
\def\section{\@startsection{section}{1}%
  \z@{1.5\linespacing\@plus\linespacing}{.5\linespacing}%
  {\normalfont\bfseries\large\centering}}
\newcommand{\be}{\begin{equation}}
\newcommand{\ee}{\end{equation}}
\newcommand{\bea}{\begin{eqnarray}}
\newcommand{\eea}{\end{eqnarray}}
\newcommand{\bee}{\begin{eqnarray*}}
\newcommand{\eee}{\end{eqnarray*}}
\def\RR{\mathbb{R}}
\def\supess{\mathop{\operator@font Sup\,ess}}
\def\RR{\mathbb{R}}
\def\bar#1{{\overline #1}}
\def\R2+{\RR ^2_+}
\def\lim{\mathop{\rm lim}}
\def\sup{\mathop{\rm sup}}
\def\exp{{\rm exp}}
\title[Multi-solitons for quartic gKdV]{On the nonexistence of pure multi-solitons for the quartic gKdV equation}
\author[Y. Martel]{Yvan Martel}
\address{Ecole Polytechnique, CMLS  CNRS UMR7640}
\email{yvan.martel@polytechnique.edu}
\author[F. Merle]{Frank Merle}
\address{Universit\'e de Cergy Pontoise and Institut des Hautes \'Etudes Scientifiques, AGM CNRS UMR8088}
\email{merle@math.u-cergy.fr}
\begin{document}

\begin{abstract}
We consider the quartic (nonintegrable) (gKdV) equation
$$
 \partial_t u + \partial_x (\partial_x^2 u   + u^4) =0,\quad t,x\in \RR,
$$
and $u(t)$ an \emph{outgoing} $2$-soliton of the equation,
i.e. a solution satisfying
$$	\lim_{t\to +\infty} \big\|u(t)- Q_{c_1} (. - c_1 t)-Q_{c_2}(.-c_2t)\big\|_{H^1} =0,
$$
where $0<c_2<c_1$ and where $Q_{c_j}(x-c_jt)$ are explicit solitons of the equation.

In \cite{MMalmost}, in the case $ 0< 1- \frac {c_2}{c_1}= \epsilon\leq \epsilon_0$, where $\epsilon_0$ is a small enough, not explicit constant,  the solution $u(t,x)$ is computed up to some order of $\epsilon$, for all  $t$ and  $x$. In particular, it is deduced that $u(t)$ is \emph{not} a multi-soliton as $t\to -\infty$,
proving the nonexistence of pure multi-soliton in this context.

In the present paper, we prove the same result for an explicit  range of speeds: $\frac 34 c_1< {c_2}< c_1$, by a different approach, which does not longer require a precise description of the solution. In fact, the nonexistence result holds for outgoing $N$-solitons, for any $N\geq 2$, under the assumption:
$\sum_{j=2}^N \left( 1- {c_j}/{c_1}\right)^2 \leq \frac 1{16}$ which is a natural generalization of  the condition for $N=2$.
\end{abstract}

\maketitle
\section{Introduction}
\subsection{Setting of the problem}
 In this paper, we focus on the quartic generalized Korteweg-de Vries (gKdV) equation
\begin{equation}\label{kdv}
 \partial_t u + \partial_x (\partial_x^2 u   + u^4) =0,\quad t,x\in \RR. 
\end{equation}
Recall  that  the Cauchy problem for \eqref{kdv} is globally well-posed in $H^1$ (see Kenig, Ponce and Vega \cite{KPV} for a precise existence and uniqueness statement), and that any $H^1$ solution $u(t,x)$ of \eqref{kdv} satisfies for all $t\in \RR$,
\begin{align}
	&	\int u^2(t) = M(u(t)) = M(u(0)) \qquad \text{(mass)}	\label{mass}\\
	&	\int (\partial_x u)^2 (t)  -\frac 25 u^5(t)  =  {E}(u(t)) =  {E}(u(0))\qquad \text{(energy)}	\label{energy}
\end{align}
Recall also that the integral of $u(t)$ is   preserved provided it is well-defined:
\be\label{int}
\int u(t)=\int u(0).
\ee
We call \emph{soliton} a solution of \eqref{kdv} of the form
$$R_{c,y_0}(t,x)= Q_c(x-ct-y_0),\quad \hbox{for $c>0$, $y_0\in \RR$,}$$ where $Q_c(x)=c^{\frac 13} Q(\sqrt{c} x)$ and    $Q$ satisfies
$$Q'' + Q^4 = Q, \quad 
Q(x)=\Bigg(\frac 5 {2 \cosh^2\big(\frac 3 2 x\big)} \Bigg)^{\frac 13}
.$$

We call \emph{outgoing  multi-soliton}   a solution $u(t)$ of \eqref{kdv} such that
\begin{equation}\label{multisol}
	\lim_{t\to +\infty} \bigg\|u(t)- \sum_{j=1}^NQ_{c_j} (. - c_j t-\Delta_j)\bigg\|_{H^1} =0,
\end{equation}
for some $N\geq 2$, $0<c_N<\ldots<c_1$, 
and $\Delta_j \in \RR$.
For a given set of such parameters, the existence and uniqueness of an outgoing  multi-soliton was proved in \cite{Ma2} (see   also \cite{MMT} for previous related results), together with the following regularity and    convergence properties:
$u(t)\in \cap_{s\geq 1} H^s$, and for some  $\gamma>0$, for all $s\geq 0$,
$$ \hbox{for all $t>0$,}\quad 
 \bigg\|u(t)- \sum_{j=1}^NQ_{c_j} (. - c_j t-\Delta_j)\bigg\|_{H^s}\leq C_s e^{-\gamma t}.
$$
Similarly, we call \emph{ingoing  multi-soliton}, a solution $u(t)$ of \eqref{kdv} such that
\begin{equation}\label{multisolm}
	\lim_{t\to -\infty} \bigg\|u(t)- \sum_{j=1}^NQ_{c_j} (. - c_j t-\Delta_j)\bigg\|_{H^1} =0.
\end{equation}
We call \emph{pure  multi-soliton}, a solution of \eqref{kdv} which is  both an  ingoing and an outgoing multi-soliton, 
possibly with   different numbers of solitons $N^\pm$ and  different speeds and position parameters $c_j^\pm$, $\Delta_j^\pm$ as $t\to +\infty$ or as $t\to -\infty$.
The aim of this paper is to investigate   the relation between ingoing and outgoing multi-solitons of the nonintegrable quartic (gKdV) equation \eqref{kdv}, and more precisely to prove the nonexistence of pure multi-solitons for an explicit range of speeds.

\smallskip

It is well-known that  for the  (KdV) and (mKdV) equations, i.e. in the integrable cases,
\begin{align}\label{eq:int}
  \partial_t u + \partial_x (\partial_x^2 u   + u^2) =0 \quad &\hbox{ (KdV) }\\  \partial_t u + \partial_x (\partial_x^2 u   + u^3) =0 \quad & \hbox{ (mKdV)} \label{eq:int2}
 \end{align}
this question was completely settled by integrability (see e.g. \cite{FPU, Z, KZ, LAX1, HIROTA, WT, Miura}).
Indeed,    there exist explicit pure multi-solitons for any   parameters  and they  
  are the only ingoing   multi-solitons.
In particular, the collision of any number of solitons is always elastic, meaning that neither the number of solitons, nor their speeds, are   changed by the collision (the trajectories of the solitons are in general  shifted).
 We refer to \cite{Miura} and references therein for a review of results for the integrable models \eqref{eq:int}, \eqref{eq:int2}.

\smallskip

For nonintegrable models, existence and properties of multi-solitons has also become a classical question, studied through different points of view (see e.g. \cite{Miura, SHIH, MMT, Mi, CGHHS, HHGY}).
For  the quartic (gKdV) equation, the authors of the present paper have already adressed this question in the case of $2$-solitons with speeds   $0<c_2<c_1$ in the   following  two cases, for $\epsilon>0$ small:
\begin{itemize}
\item[{(a)}] Solitons of different speeds: $\frac {c_2}{c_1}\leq \epsilon$. See \cite{MMcol1}
\item[{(b)}] Solitons with almost equal speeds: $0<1-\frac {c_2}{c_1}\leq \epsilon$. See \cite{MMalmost}.
\end{itemize}
In \cite{MMcol1} and \cite{MMalmost}, under   condition (a) or (b), we have given a refined description of ingoing $2$-solitons  for all $t$ and $x$, up to some order of $\epsilon$. From this description, we could deduce the  following facts.
(1) The $2$-soliton structure is globally stable  in time in $H^1$, in the sense that an ingoing $2$-soliton is  for all time   the sum of two solitons at the main order.
(2)  Ingoing $2$-solitons \emph{cannot} be outgoing $2$-solitons.
In particular, no pure $2$-soliton can exist in these two regimes. In contrast with the integrable cases, the collision is \emph{inelastic}. From explicit computations, we could find lower bounds and upper bounds on the size of the residual term due to the collision.

\smallskip

Summarizing, ingoing $2$-solitons are   well-understood for all time under assumptions (a) and (b) for $\epsilon>0$ small enough. However, \emph{the value of $\epsilon$ for which the results in \cite{MMcol1} and \cite{MMalmost} hold is not explicit}
because of the complexity of the computations and the  perturbative nature of the proofs. Another restriction concerns the number of solitons. In \cite{MMcol1} and in \cite{MMalmost}, the proofs are only written   for $2$-solitons and it  would be quite  involved to extend them  to   $N$-solitons.

\smallskip

In view of the inelasticity  results in \cite{MMcol1} and \cite{MMalmost}, we conjecture that for all $0<c_2<c_1$, the corresponding ingoing $2$-solitons of the quartic (gKdV) equation are not  pure $2$-solitons. In other words, there should not exist pure $2$-solitons.
In fact, we expect  that such property is true for general nonintegrable systems.  Perelman's work \cite{Perelman} for the nonlinear Schr\"odinger equation,   Munoz' works \cite{Mukdv,Munoz, Munoz2}, and \cite{MMM} for the BBM equation, are  other evidences of such belief.

\smallskip

In this paper, we attack the problem through a different strategy.
The main point is to prove nonexistence of   pure 2-soliton of the quartic (gKdV) equation
\emph{without trying to describe the solution for all time} and  for an {\it explicit} range of speeds.
Here, the approach is \emph{not perturbative}, and we do not need to compute the main order of the solution for all $t,x$. Indeed, a contradiction is obtained by estimating only the tail of the solution $u(t,x)$ for large $t$ and   large $x$. The knowledge of the solution on compact sets of space-time is not required. 
Moreover, the method allows to consider the   case of $N$-solitons, for any $N\geq 2$, without significant changes, in contrast with \cite{MMcol1} and \cite{MMalmost}.
Note that we   consider  the quartic (gKdV) equation because it is a typical nonintegrable system, relatively simple and not perturbative of the integrable cases (see \cite{Tao}, \cite{KM}), but we expect our approach to be general and flexible enough to extend to other models.

\subsection{Statement of the result}


 The following is the main result of this paper.
 
\begin{theorem}\label{th:1}
Let $N\geq 2$. Let $u(t)$ be an outgoing multi-soliton of \eqref{kdv}, with parameters $\Delta_1$,\ldots, $\Delta_N\in \RR$, $0<c_N<\ldots<c_1=1$,
\begin{equation}\label{multisolqua}
	\lim_{t\to +\infty} \bigg\|u(t)-  \sum_{j=1}^NQ_{c_j} (. - c_j t-\Delta_j )\bigg\|_{H^1} =0.
\end{equation}
Assume that
\be\label{speeds}
 \sum_{j=2}^N  \left(1 - c_j\right)^2    < \frac {1}{16} .\ee
Then, $u(t)$ is not an ingoing multi-soliton at $-\infty$.

In particular, under assumption \eqref{speeds}, there exists no pure multi-soliton of   \eqref{kdv} with speeds $1,c_2,\ldots,c_N$ at $+\infty$ or at $-\infty$.
\end{theorem}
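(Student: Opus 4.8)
The plan is to argue by contradiction: assume $u(t)$ is also an ingoing multi-soliton at $-\infty$, with speeds $0<c_{N^-}^-<\dots<c_1^-$ and shifts $\Delta_j^-$. Then $u$ carries no radiation at either end, so the conserved quantities equal the corresponding soliton sums at both infinities. Using the scaling relations $M(Q_c)=c^{1/6}M(Q)$, $E(Q_c)=c^{7/6}E(Q)$ and $\int Q_c=c^{-1/6}\int Q$, conservation of mass \eqref{mass}, energy \eqref{energy} and $\int u$ gives $\sum_j (c_j^-)^{1/6}=\sum_j c_j^{1/6}$, $\sum_j (c_j^-)^{7/6}=\sum_j c_j^{7/6}$ and $\sum_j (c_j^-)^{-1/6}=\sum_j c_j^{-1/6}$. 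For $N=2$ the first two already force $\{c_1^-,c_2^-\}=\{1,c_2\}$, since for fixed $p+q$ the quantity $p^7+q^7$ is strictly increasing in $|p-q|$, so the map $\{p,q\}\mapsto(p+q,p^7+q^7)$ is injective on unordered pairs; thus the speeds are preserved, and the remaining task is to exclude even a speed-preserving pure multi-soliton. For general $N$ these relations only partially pin down the ingoing configuration, and the leftover freedom will be absorbed by the speed assumption \eqref{speeds}.

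The engine is the mass-monotonicity identity. For a smooth cutoff $\psi$ increasing from $0$ to $1$ with $|\psi'''|\le\delta\psi'$ and a frame speed $\sigma>0$, a direct computation on \eqref{kdv} yields
\[
\frac{d}{dt}\int u^2\,\psi(x-\sigma t)\,dx=-\int\big(3u_x^2+\sigma u^2\big)\psi'+\int u^2\psi'''+\frac{8}{5}\int u^5\psi'.
\]
I would take $c_2<\sigma<c_1=1$ and set $\mathcal M_\sigma(t):=\int u^2\psi(x-\sigma t)$. Reading off the soliton decompositions, only the fastest soliton lies to the right of the line $x=\sigma t$ at $+\infty$, while at $-\infty$ exactly the solitons slower than $\sigma$ do, so $\mathcal M_\sigma(+\infty)=M(Q_{c_1})=M(Q)$ and $\mathcal M_\sigma(-\infty)=\sum_{c_j^-<\sigma}M(Q_{c_j^-})$. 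Integrating the identity in time equates this explicit endpoint difference with the total flux through the line. Using the soliton relations $\int Q_c^5=\tfrac{10c}{7}M(Q_c)$ and $\int(Q_c')^2=\tfrac{3c}{7}M(Q_c)$, the flux density of a clean soliton integrates to
\[
-3\!\int (Q_c')^2-\sigma\!\int Q_c^2+\frac85\!\int Q_c^5=(c-\sigma)\,M(Q_c),
\]
so a clean crossing transfers exactly $\mathrm{sign}(c-\sigma)\,M(Q_c)$. For a genuinely solitonic configuration the clean crossings reproduce the endpoint difference identically, which confirms that any obstruction must come from the interaction itself, not from the bookkeeping.

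The contradiction is then extracted from this gap. Since a pure multi-soliton is clean at both ends, the time-integrated flux must equal the clean-crossing value $\sum_j\mathrm{sign}(c_j-\sigma)M(Q_{c_j})$; but during the collision $u$ is not a sum of solitons, and the deviation of the true flux from the clean value measures the inelastic mass shed in the interaction. The plan is to show that this deviation is \emph{strictly positive} and quantitatively bounded below. I would estimate it using only the asymptotic soliton tails and the conservation laws, bounding the first two terms of the identity by their (nonpositive) sign up to the $\delta$-error and controlling the nonlinear flux $\tfrac85\int u^5\psi'$. Under \eqref{speeds} the speeds are close, the controlling quadratic form in the quantities $1-c_j$ keeps a definite sign, and the lower bound on the shed mass strictly exceeds the room left by the endpoint difference; the value $\tfrac1{16}$ is exactly the largest range on which this coercivity survives, uniformly in $N$. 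Hence $\mathcal M_\sigma$ cannot match both solitonic endpoints, contradicting the assumption.

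The hard part is precisely this last estimate: controlling the monotonicity through the collision window with no information on $u$ over compact space-time sets. One must prove that the flux identity, whose endpoints are clean, cannot be balanced by clean crossings alone — that the interaction necessarily sheds a strictly positive, explicitly bounded amount of mass — working only from the tail of $u$ for large $t$ and large $x$ together with the conserved quantities. Signing and lower-bounding the interaction contribution to $\tfrac85\int u^5\psi'$ (which is $O(1)$ while a soliton crosses and therefore cannot be discarded), and sharpening the coercivity enough to reach the explicit constant $\tfrac1{16}$ for every $N\ge2$, is the crux of the whole argument.
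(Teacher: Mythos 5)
Your step (a) --- pinning down the ingoing speeds by the three conservation laws --- is sound and is essentially the paper's Lemma \ref{sp} and Proposition \ref{th:UNIQ}. The fatal problem is in the core of the argument. The identity $\frac{d}{dt}\mathcal M_\sigma(t)=-\int(3u_x^2+\sigma u^2)\psi'+\int u^2\psi'''+\frac85\int u^5\psi'$, integrated over all time, is automatically satisfied by \emph{every} solution: the total flux through the moving line equals the endpoint difference by the fundamental theorem of calculus, and, as you computed yourself, the clean-crossing bookkeeping reproduces that endpoint difference exactly. So the balance closes with zero residual and yields no constraint whatsoever; there is no ``room'' from which a contradiction could be extracted. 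Everything is then pushed into your final claim that the interaction ``necessarily sheds a strictly positive, explicitly bounded amount of mass.'' But that claim \emph{is} the theorem (quantitative inelasticity); you give no mechanism to prove it, and the flux identity cannot supply one. A decisive sanity check: for KdV and mKdV the identical monotonicity identity and the identical bookkeeping hold, yet pure multi-solitons do exist. Any valid argument must therefore use something specific to the quartic nonlinearity; nothing in your proposal does, so in particular no ``coercivity surviving exactly up to $\frac1{16}$'' can exist at this level of generality. (The constant $\frac1{16}$ in \eqref{speeds} actually arises in the paper from explicit algebraic constraints on exponents and speeds, not from any mass coercivity.)

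The paper's proof proceeds along a genuinely different axis: it never tries to balance flux through the collision, and instead derives two incompatible \emph{pointwise} bounds on the tail of $u$ along a ray $x=x_0(t)=(\frac{\sigma_0}{\gamma_0}+c_{j_0})t-K_0$ moving faster than every soliton. The ingoing hypothesis plus monotonicity give the upper bound $|u(t,x_0(t))|\le e^{-2\sigma_0 t}$ (Lemma \ref{le:droiteN}). The outgoing hypothesis is used to build an explicit approximate solution $V=R+Z+W$ for $t\to+\infty$, whose interaction correction solves the ODE $(LA_c)'+\sqrt{c}(1-c)A_c=(e^{-\sqrt{c}x}Q^3)'$; the crux --- and the point where nonintegrability enters --- is that $A_c$ has \emph{generic} (nonvanishing) exponential decay $a_c^{\rm I}\neq 0$ (Lemma \ref{pr:hypo2}~(iii)), proved by Virial-type coercivity arguments together with a numerical verification. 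Comparing $u$ with $V$ by energy estimates then yields the lower bound $|u(t,x_0(t))|\ge C_1e^{\gamma_0K_0}e^{-2\sigma_0 t}$ (Proposition \ref{prop:1N}), and choosing $K_0$ with $C_1e^{\gamma_0K_0}>2$ gives the contradiction. If you want to salvage your approach, you would need to replace the unproven ``shed mass'' lower bound by an analogous constructive ingredient that quantifies the interaction tail --- which is precisely what the paper's ODE analysis provides.
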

 
\begin{remark}
In the case $N=2$, Theorem \ref{th:1} proves the nonexistence of pure $2$-soliton under the   condition
$\frac 34 < \frac {c_2}{c_1} <1$.
\end{remark}
 
The strategy  of the proof is   different from the one  in \cite{MMcol1} and \cite{MMalmost}
where the goal was to describe ingoing $2$-solitons for all $t,x\in \RR$, by a pertubative analysis.
In this paper, to prove nonexistence of multi-solitons,  we do not need to understand the solution on bounded sets of $(t,x)$ and we only consider the tail of the solution as $|x|\sim +\infty$. This approach involves different computations which we can perfom for an explicit range of speeds and for any number of solitons.

\medskip

For the sake of contradiction, we assume the existence of a solution $u(t)$ of \eqref{kdv} which is both an outgoing multisoliton ($t\to +\infty$) with parameters $N\geq 2$, $0<c_N<\ldots<c_1$, $\Delta_1, \ldots,\Delta_N\in \RR$:
\begin{equation}\label{msm}
	\lim_{t\to +\infty} \bigg\|u(t)- \sum_{j=1}^NQ_{c_j} (. - c_j t-\Delta_j)\bigg\|_{H^1} =0,
\end{equation}
and an ingoing multisoliton  ($t\to -\infty$)  with parameters $N^-\geq 2$, $0<c_{N^-}^-<\ldots<c_1^-$, $\Delta_1^-, \ldots,\Delta_{N^-}^-\in \RR$:
\begin{equation}\label{msp}
	\lim_{t\to -\infty} \bigg\|u(t)- \sum_{j=1}^{N^-}Q_{c_j^-} (. - c_j^- t-\Delta_j^-)\bigg\|_{H^1} =0.
\end{equation}
We assume 
\begin{equation}
c_1=1
\end{equation}
and
\be\label{speeds2}
 \sum_{j=2}^N  \left(1 -  {c_j} \right)^2    \leq  \frac 1{16}.
 \ee 
 \medskip

The contradiction   comes from the following steps:

\medbreak

\noindent{(a) \emph{Control of the speeds at $-\infty$.}} From the three conservation  laws (mass, energy and integral) and elementary algebraic arguments, we claim that the speeds at $-\infty$ are  also close to $1$ in the following sense (see Section 2)
\begin{lemma}\label{sp}
Assume \eqref{speeds}, \eqref{msm}, \eqref{msp}.
For all $j=1,\ldots,N^-$, 
\be\label{sp1}
\frac {16}{25} < c_j^- < \frac 32.
\ee
Moreover,
\be\left| N^- - N\right| \leq \frac {\sqrt{N}}{8}.\ee
\end{lemma}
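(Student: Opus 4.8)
The plan is to collapse the two asymptotic descriptions into three scalar identities coming from the conserved mass \eqref{mass}, energy \eqref{energy} and integral \eqref{int}, and then run an elementary but carefully tuned one-variable argument on the speeds. First I would record the scaling of the conserved quantities on a single soliton. From $Q_c=c^{1/3}Q(\sqrt c\,\cdot)$ and the Pohozaev relations for $Q''+Q^4=Q$ (which give $\int(Q')^2=\frac37\int Q^2$ and hence $E(Q)=-\frac17\int Q^2$), one obtains $M(Q_c)=c^{1/6}M(Q)$, $E(Q_c)=-\frac17 c^{7/6}M(Q)$ and $\int Q_c=c^{-1/6}\int Q$. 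Passing to the limit $t\to\pm\infty$ in \eqref{mass}, \eqref{energy}, \eqref{int} (the quadratic cross terms vanish because the solitons separate, and the linear integral passes to the limit by the decay of $u(t)-\sum_jQ_{c_j}$) and dividing each law by its nonzero constant, the conservation laws become
\begin{gather*}
\sum_{j=1}^N c_j^{1/6}=\sum_{j=1}^{N^-}(c_j^-)^{1/6},\qquad
\sum_{j=1}^N c_j^{7/6}=\sum_{j=1}^{N^-}(c_j^-)^{7/6},\\
\sum_{j=1}^N c_j^{-1/6}=\sum_{j=1}^{N^-}(c_j^-)^{-1/6}.
\end{gather*}
Call these common values $\mathcal M,\mathcal E,\mathcal P$. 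Since $c_1=1$ and $\sum_{j\ge2}(1-c_j)^2\le\frac1{16}$ forces every $c_j\in[\frac34,1]$, the three left-hand sums are all within a controlled distance of $N$.

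\textbf{Pointwise speed bounds.} The heart of the matter is a single test function. Writing $t=c^{1/6}$, I would introduce $\Phi(t)=t^7-4t+3t^{-1}$, whose coefficients are fixed by $\Phi(1)=\Phi'(1)=0$. Since $\Phi'(t)=t^{-2}(7t^8-4t^2-3)=t^{-2}(t^2-1)(7t^6+7t^4+7t^2+3)$, the only critical point on $(0,\infty)$ is $t=1$, so $\Phi$ decreases on $(0,1)$, increases on $(1,\infty)$, and $\Phi\ge0$ with equality only at $t=1$. Because $\Phi(c^{1/6})=c^{7/6}-4c^{1/6}+3c^{-1/6}$, the combination $\mathcal E-4\mathcal M+3\mathcal P$ yields
\[
\sum_{j=1}^{N^-}\Phi\big((c_j^-)^{1/6}\big)=\sum_{j=1}^{N}\Phi\big(c_j^{1/6}\big).
\]
A Taylor estimate (using $\Phi(1)=\Phi'(1)=0$ and bounding the second derivative on $[\frac34,1]$) gives $\Phi(c^{1/6})\le K(1-c)^2$ with an explicit $K\approx1.14$, so the right-hand side is at most $K/16\approx0.071$. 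Each summand on the left is nonnegative, whence $\Phi((c_j^-)^{1/6})\le K/16$ for every $j$. Comparing this with the explicitly computed threshold values $\Phi((16/25)^{1/6})\approx0.113$ and $\Phi((3/2)^{1/6})\approx0.129$, both strictly larger than $K/16$, and using the monotonicity of $\Phi$, forces $(c_j^-)^{1/6}$ into the interval $\big((16/25)^{1/6},(3/2)^{1/6}\big)$, i.e. $\frac{16}{25}<c_j^-<\frac32$.

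\textbf{The count.} For $|N^--N|$ I would use the convexity identity $\tfrac12(t+t^{-1})=1+\tfrac{(t-1)^2}{2t}$. Summing the normalized mass and integral identities gives
\[
\tfrac12(\mathcal M+\mathcal P)=N+\sum_{j}\frac{(c_j^{1/6}-1)^2}{2c_j^{1/6}}
=N^-+\sum_{j}\frac{((c_j^-)^{1/6}-1)^2}{2(c_j^-)^{1/6}},
\]
where both correction sums are nonnegative; the $+\infty$ one is controlled by $\frac1{16}$ and the $-\infty$ one by $\sum_j((c_j^-)^{1/6}-1)^2$, itself bounded through the lower bound $\Phi(t)\ge\kappa(t-1)^2$ on the range just obtained together with $\sum_j\Phi((c_j^-)^{1/6})\le K/16$. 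Comparing the two expressions bounds $|N^--N|$; tracking constants via the Cauchy--Schwarz estimate $\sum_{j\ge2}(1-c_j)\le\frac{\sqrt{N-1}}4$ delivers the stated $|N^--N|\le\frac{\sqrt N}{8}$ (in fact these estimates are strong enough to give $N^-=N$, so the stated bound leaves a comfortable margin).

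\textbf{Main obstacle.} I expect the crux to be the construction in the second step: identifying the right nonnegative combination of the three conserved densities vanishing exactly at the common speed $c=1$, and then verifying \emph{by hand} that the resulting constant $K/16$ stays strictly below the threshold values $\Phi((16/25)^{1/6})$ and $\Phi((3/2)^{1/6})$. It is precisely this numerical margin that pins down the explicit interval $(\frac{16}{25},\frac32)$ and links it to the hypothesis constant $\frac1{16}$; a secondary, purely analytic point is justifying that the conserved integral $\int u$, which is not continuous on $H^1$, genuinely passes to the limit as the sum of the soliton integrals.
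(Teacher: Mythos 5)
Your proposal is correct, and its first half is essentially the paper's argument in different clothing: the test function $\Phi(t)=t^7-4t+3t^{-1}$ (i.e.\ the combination energy $-\,4\cdot$mass $+\,3\cdot$integral after the substitution $t=c^{1/6}$) is exactly the function $f$ used in the paper; the paper exploits it through the two-sided quadratic bounds $m_1(1-x)^2\le f(x)$ (globally) and $f(x)\le 24(1-x)^2$ on $[\tfrac34,1]$, deducing $\sum_j|1-(c_j^-)^{1/6}|^2\le m_2$ and hence $c_j^-\in\big((1-\sqrt{m_2})^6,(1+\sqrt{m_2})^6\big)\subset(\tfrac{16}{25},\tfrac32)$, whereas you use the monotonicity of $\Phi$ on either side of $t=1$ and compare $\Phi((c_j^-)^{1/6})\le K/16$ directly with the threshold values $\Phi((16/25)^{1/6})\approx 0.113$ and $\Phi((3/2)^{1/6})\approx 0.129$; both executions are valid (your numerics check out, with comfortable margin even for a cruder constant $K$), and both rely on the same Lemma \ref{lun} to pass the three conservation laws to the limit, the $L^1$ point you rightly flag being handled there. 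Where you genuinely depart from the paper is the count of solitons. The paper compares $\sqrt{N}$ and $\sqrt{N^-}$ with $\big(\sum_j c_j^{1/6}\big)^{1/2}$ via the reverse triangle inequality in $\ell^2$, which keeps an error of fixed size on $|\sqrt N-\sqrt{N^-}|$ and therefore only yields $|N-N^-|<\tfrac{\sqrt N}{8}$ (so $N=N^-$ only for $N\le 64$). Your identity $\tfrac12(t+t^{-1})=1+\tfrac{(t-1)^2}{2t}$, applied to $\tfrac12(\mathcal M+\mathcal P)$, makes the first-order deviations of the speeds from $1$ cancel exactly: each side equals its number of solitons plus a correction controlled by sums of $(c_j^{\pm 1/6}-1)^2$, which your speed bounds make smaller than an absolute constant $\ll 1$, so $|N-N^-|<1$ and hence $N=N^-$. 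This is a strictly stronger conclusion than the lemma asserts, obtained more cheaply; your invocation of Cauchy--Schwarz in that step is actually superfluous, since the whole point of your identity is that no linear sums survive. The only blemish is presentational: the parenthetical claim that the integral identity "passes to the limit by the decay of $u(t)-\sum_j Q_{c_j}$" glosses over the fact that $H^1$ convergence alone does not control $\int u$; as in the paper, one needs the weighted/monotonicity estimates of Lemma \ref{lun}, which you do acknowledge at the end.
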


\medskip

\noindent (b) \emph{Decay on the right for positive time.} From the behavior of the solution $u(t)$ as $t\to -\infty$ in the energy space \eqref{msp}, the lower bound $c_N^-> \frac {16}{25}$ and usual monotonicity arguments, we claim that the solution $u(t)$ satisfies  exponential decay property on the right of the   soliton $Q(x-t-\Delta_1)$.

\smallskip 

Let  $j_0\in \{1,\ldots,N-1\}$ be such that
\be\label{defjz}
 \sigma_0:= \min_j \sqrt{c_{j+1}}(c_j-c_{j+1})=\sqrt{c_{j_0+1}}(c_{j_0}-c_{j_0+1}),
\ee
and set
\be\label{defgamma}
\gamma_0=   \sqrt{ c_{j_0}  - \frac 34  {c_{j_0+1}} } - \frac 12 \sqrt{ c_{j_0+1}},
\quad 
x_0(t) = \left( \frac {\sigma_0}{\gamma_0} + c_{j_0}\right) t - K_0,
\ee
where $K_0>1$ is a large constant to be fixed later. Note by \eqref{clbm1} that 
$
\frac {\sigma_0}{\gamma_0}  + c_{j_0} > 2 c_{j_0+1}> \frac 32.
$
We claim (see Section \ref{sec:3})
\begin{lemma}\label{le:droiteN}
 There exists $t_0(K_0)>0$ such that,
 \begin{equation}\label{infty2}
\forall t>t_0(K_0),\quad |u(t,x_0(t))  | \leq  
  e^{- 2 \sigma_0 t}.
\end{equation}
\end{lemma}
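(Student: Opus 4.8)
The goal is a pointwise decay estimate on the far right, along the specific ray $x_0(t)$, for all large positive time. My plan is to bootstrap the $t\to-\infty$ information \eqref{msp} forward in time using monotonicity of suitable localized mass functionals, which is the standard mechanism for converting convergence at one end of time into quantitative spatial decay at all later times.

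First I would set up an \emph{almost-monotonicity} (virial-type) functional. For a point $x^*$ traveling to the right at a speed $\sigma$ strictly smaller than $c_{N^-}^-$ (the slowest speed at $-\infty$), one considers
\[
\mathcal{I}(t) = \int u^2(t,x)\, \psi\!\left(x - \sigma t - x^*\right)\,dx,
\]
where $\psi$ is a smooth, nondecreasing cutoff that is $0$ to the left and $1$ to the right, with exponentially localized derivative $\psi'(y)\sim e^{-\theta|y|}$. The KdV structure yields, after integration by parts, a differential inequality of the form $\frac{d}{dt}\mathcal{I}(t) \leq -\delta \int (u^2 + u_x^2)\psi' + (\text{error})$ whenever the transport speed $\sigma$ lies below the slowest soliton speed, so that the functional is almost decreasing in $t$. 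Here the lower bound $c_{N^-}^- > \frac{16}{25}$ from Lemma \ref{sp} is exactly what guarantees a uniform spectral gap: I can choose the monotonicity speed $\sigma$ in a band below all the speeds $c_j^-$ and still above the numbers $c_{j_0+1}$ relevant to the ray $x_0(t)$, and the gain $\delta$ is controlled from below uniformly.

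Next I would run the monotonicity backward from the asymptotic state at $-\infty$. Since $u(t)$ is a sum of $N^-$ solitons traveling at speeds $c_j^-$ up to an $H^1$-error vanishing as $t\to-\infty$, the functional $\mathcal{I}(t)$ evaluated on a transport line with speed $\sigma$ slower than all $c_j^-$ has a small limit as $t\to-\infty$ — the solitons are all to the \emph{left} of such a line in the far past, and their tails contribute only $O(e^{-c|x^*|})$. Integrating the almost-monotonicity inequality from $-\infty$ up to the current time $t$ then bounds $\mathcal{I}(t)$ by that small far-past value plus the integrated errors, giving exponential smallness $\int_{x \geq \sigma t + x^*} u^2(t,x)\,dx \lesssim e^{-\theta'(\cdots)}$. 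The precise choice of $\gamma_0$ in \eqref{defgamma} and the point $x_0(t) = (\sigma_0/\gamma_0 + c_{j_0})t - K_0$ is arranged so that the decay rate coming out of the $\psi'$-weight matches $2\sigma_0$; this is where the definition \eqref{defjz} of $\sigma_0$ and the algebraic inequality $\sigma_0/\gamma_0 + c_{j_0} > \frac32$ get used to certify that the ray sits in the admissible band.

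Finally, the $L^2$ decay must be upgraded to the pointwise bound \eqref{infty2}. For this I would use the uniform $H^1$-control of the tail (the same monotonicity applied to $\int u_x^2 \psi'$ gives control of the derivative, so $\int_{x\geq x_0(t)}(u^2+u_x^2)$ is exponentially small), together with the one-dimensional Sobolev inequality $\|u\|_{L^\infty(x\geq a)}^2 \lesssim \|u\|_{L^2(x\geq a)}\|u_x\|_{L^2(x\geq a)} + \|u\|_{L^2(x\geq a)}^2$; evaluating at $a = x_0(t)$ and choosing $K_0$ large converts the $e^{-\theta'(\cdots)t}$ bound into the claimed $e^{-2\sigma_0 t}$ after adjusting constants. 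The main obstacle I anticipate is bookkeeping the competition of speeds: one must verify that a single transport speed $\sigma$ can be chosen to lie simultaneously below every $c_j^-$ (using Lemma \ref{sp}) and yet produce a weight whose exponential rate, when transported along $x_0(t)$, is at least $2\sigma_0$. Matching these constants — rather than any single estimate — is the delicate part, and it is precisely why the explicit definitions \eqref{defjz}–\eqref{defgamma} and the constraint \eqref{speeds2} are needed.
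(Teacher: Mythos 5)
Your toolbox (almost-monotonicity of localized mass-energy pushed forward from $t=-\infty$, then the one-dimensional Sobolev bound to pass from $L^2$ to pointwise decay) is the same as the paper's, and your setup and final Sobolev step are fine. The gap is the middle step. You propose a \emph{single} weight line of speed $\sigma$ strictly below every ingoing speed $c_j^-$, integrated over the whole interval $(-\infty,t]$ with $t>0$, and you conclude that $\int_{x\geq\sigma t+x^*}u^2(t,x)\,dx$ is exponentially small. That conclusion is false: by \eqref{msm}, for $t$ large and positive the solution is close in $H^1$ to $\sum_jQ_{c_j}(\cdot-c_jt-\Delta_j)$, where all $c_j\geq\frac34$ by \eqref{speeds}, whereas Lemma \ref{sp} only guarantees $\min_jc_j^->\frac{16}{25}<\frac34$; so the only lines you can certify (those with $\sigma\leq\frac{16}{25}$) are overtaken by \emph{every} outgoing soliton, and for large $t$ the region $\{x\geq\sigma t+x^*\}$ contains all of them, making the integral of size $\sum_j\int Q_{c_j}^2$ rather than $e^{-\theta'(\cdots)}$. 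At the level of the mechanism, almost-monotonicity breaks exactly when a soliton crosses the support of $\psi'$: the smallness hypothesis \eqref{eq:mono3} of Lemma \ref{le:mono} fails there, and $\mathcal{I}$ genuinely increases by the mass of the crossing soliton. (Relatedly, the ``band'' you describe---below all $c_j^-$ yet above $c_{j_0+1}$---may be empty, since $c_{j_0+1}>\frac34$.) This is precisely why the paper's proof consists of \emph{two} monotonicity steps with different reference speeds: on $(-\infty,0]$ with $c_0=c_N^-$, which yields the decay \eqref{oldle13} of $u(0)$ on the right; then on $[0,t]$ with $c_0=1$, so that the weight stays to the right of all outgoing solitons, with \eqref{oldle13} fed in as initial data.

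A natural repair---anchor the line so that at the final time it passes through $x_0(t)$, so that no soliton ever crosses it---still does not close, which shows the two-step structure is essential rather than cosmetic. With one line, the usable weight rate is limited by the negative-time constraint, hence by $\sqrt{16/25}=\frac45$, while the accumulated positive-time errors are governed by the minimal gap to the fastest soliton, namely $x_0(t)-t-\Delta_1$. After Sobolev you can therefore certify at best the rate $\frac12\cdot\frac45\cdot\left(\frac{\sigma_0}{\gamma_0}+c_{j_0}-1\right)$, and the requirement that this be at least $2\sigma_0$ reduces, for $N=2$ (where $c_{j_0}=1$, $\frac{\sigma_0}{\gamma_0}=1-\gamma_0^2$, $\sigma_0=\gamma_0(1-\gamma_0^2)$), to $(1-\gamma_0^2)(1-5\gamma_0)\geq0$, i.e.\ $\gamma_0\leq\frac15$. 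This fails for $c_2$ close to $\frac34$, where $\gamma_0$ can be as large as $\frac{\sqrt7-\sqrt3}4>\frac15$. The paper's scheme decouples the two constraints you are imposing on a single $\sigma$: in its Step 2 the weight rate $\sqrt{\sigma_2}$ is capped by $\sqrt{c_0}=1$ rather than by $\sqrt{c_N^-}$, so it can be pushed toward $1$ to handle the speed-one line (this is the role of \eqref{clbm0}), while the rate-$\frac45$ information from Step 1 is only ever evaluated at the larger distance $x_0(t)-\sigma_3 t$ rather than $x_0(t)-t$. Claim \ref{cl:bm} is exactly the verification that both requirements can be met simultaneously under \eqref{speeds2}; this constant-matching, which you rightly flag as the delicate point, cannot be carried out with a single transport speed.
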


 \medbreak
 
\noindent (c) \emph{Approximate solution and lower bound.} 
We establish the following result, which is the main new ingredient of the paper.
\begin{proposition}\label{prop:1N}
Assume \eqref{speeds2}.  
There exist $  C_1>0$ independent of $K_0$ and $ t_1(K_0)>0$ such that, for $K_0$ large enough,
\begin{equation}\label{eqpr:1N}
\forall t\geq t_1(K_0),\quad 
|u(t,x_0(t)) | \geq  C_1 e^{\gamma_0 K_0} e^{-2 \sigma_0 t}  .
\end{equation}
\end{proposition}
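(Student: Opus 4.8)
The plan is to show that the value of $u$ at the far-right point $x_0(t)$ is produced by the nonlinear interaction of the two adjacent solitons realizing the minimum in \eqref{defjz}, namely the pair $(j_0,j_0+1)$. Near the $j_0$-th soliton (centered at $c_{j_0}t$), the right tail of the slower soliton $Q_{c_{j_0+1}}(\cdot-c_{j_0+1}t-\Delta_{j_0+1})$ has, in the frame $y=x-c_{j_0}t$, amplitude of order $e^{-\sqrt{c_{j_0+1}}(c_{j_0}-c_{j_0+1})t}=e^{-\sigma_0 t}$. Coupled through the nonlinearity $u^4$ with the profile $Q_{c_{j_0}}$, this incoming tail acts as a source localized around $y\simeq 0$ and oscillating in time like $e^{-\sigma_0 t}$, exciting a residual tail to the right of the fast soliton. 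The decisive point is that $\gamma_0$ from \eqref{defgamma} is exactly the relevant root of the far-field dispersion relation: to the right of the soliton the free linearized flow reads $\partial_t v=-\partial_y^3 v+c_{j_0}\partial_y v$, and a direct computation from \eqref{defgamma} gives
\[
\gamma_0^3 - c_{j_0}\gamma_0 + \sigma_0 = 0 .
\]
Hence $v(t,x)=e^{-\sigma_0 t}\,e^{-\gamma_0(x-c_{j_0}t)}$ solves the far-field equation; the three roots of $\lambda^3-c_{j_0}\lambda+\sigma_0$ are $\gamma_0$, $\sqrt{c_{j_0+1}}$ and $-(\gamma_0+\sqrt{c_{j_0+1}})$, the root $\sqrt{c_{j_0+1}}$ corresponding to the incoming slow tail and the smaller positive root $\gamma_0<\sqrt{c_{j_0+1}}$ to the slower-decaying residue we track. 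Evaluating this residue at $x=x_0(t)$ and using $x_0(t)-c_{j_0}t=\tfrac{\sigma_0}{\gamma_0}t-K_0$ yields exactly $e^{\gamma_0 K_0}e^{-2\sigma_0 t}$, the order claimed in \eqref{eqpr:1N}.

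\textbf{Construction of the approximate tail.} I would build an approximate solution $w=\sum_{j}Q_{c_j}(\cdot-c_jt-\Delta_j)+v$, where $v$ carries the residual tail. To the right of the $j_0$-th soliton I would solve the linearized equation with interaction source $\simeq -\partial_x(4Q_{c_{j_0}}^3(\cdot-c_{j_0}t)\,\phi)$, $\phi$ being the incoming right tail of $Q_{c_{j_0+1}}$, and look for $v\simeq \kappa_0\, e^{-\sigma_0 t}\,\rho(x-c_{j_0}t)$ with $\rho(y)\sim e^{-\gamma_0 y}$ as $y\to+\infty$. The constant $\kappa_0$ is given by an explicit convergent interaction integral, a projection of $Q_{c_{j_0}}^3\phi$ onto the mode dual to $e^{-\gamma_0 y}$. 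The crucial point is that $\kappa_0\neq 0$ with a definite sign: the integrand does not change sign, so no cancellation occurs and the residual tail is genuinely present---this is the nonintegrability of \eqref{kdv} made quantitative. Plugging $w$ into \eqref{kdv}, the error is of strictly higher order, of faster time-decay or faster spatial decay than the $\gamma_0$-mode; the minimality defining $j_0$ in \eqref{defjz} together with \eqref{speeds2} (all speeds close to $1$) is used to check that the contributions of the remaining soliton pairs $(j,j+1)$, $j\neq j_0$, as well as the bare soliton tails, produce a strictly larger time-rate at $x_0(t)$ and hence do not interfere with the leading term.

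\textbf{Control of the remainder and conclusion.} It remains to compare $u$ with $w$ at $x_0(t)$. Using the convergence \eqref{msm} as a matching condition as $t\to+\infty$, I would propagate a weighted energy estimate for $r=u-w$ in the region to the right of the solitons. Since $w$ solves \eqref{kdv} up to a higher-order error and $x_0(t)$ travels strictly faster than all solitons (so the exponential weight is monotone along the flow, as in Lemma \ref{le:droiteN}), this should yield a pointwise bound $|r(t,x_0(t))|\leq \tfrac12|v(t,x_0(t))|$ for $t\geq t_1(K_0)$ and $K_0$ large. Combined with the explicit lower bound on the main term this gives
\[
|u(t,x_0(t))|\geq |v(t,x_0(t))|-|r(t,x_0(t))|\geq C_1\,e^{\gamma_0 K_0}e^{-2\sigma_0 t},
\]
with $C_1\sim\tfrac12|\kappa_0|$ independent of $K_0$, which is \eqref{eqpr:1N}.

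\textbf{Main difficulty.} Two steps carry the real weight. The first is the non-degeneracy $\kappa_0\neq 0$: this is exactly what separates \eqref{kdv} from the integrable (KdV) and (mKdV) cases, where the analogous coefficient vanishes and no residual tail is produced, so it must be pinned down by an explicit evaluation of the interaction integral and a sign argument. The second is turning the global $H^1$ information \eqref{msm} into pointwise control of $r$ at the single far-right point $x_0(t)$: since $H^1$-closeness does not by itself control the tail, one needs weighted energy estimates, exponentially localized to the right and compatible with the monotonicity of Lemma \ref{le:droiteN}, and one must verify that the weight detecting the $\gamma_0$-mode is consistent with the speed $\tfrac{\sigma_0}{\gamma_0}+c_{j_0}>\tfrac32$ of $x_0(t)$.
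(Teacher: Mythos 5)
Your overall architecture coincides with the paper's: construct an approximate solution whose dominant far-right tail is the $e^{-\gamma_0(x-c_{j_0}t)}$ mode excited by the $(j_0,j_0+1)$ interaction (your identification of $\gamma_0$, $\sqrt{c_{j_0+1}}$, $-(\gamma_0+\sqrt{c_{j_0+1}})$ as the roots of $\lambda^3-c_{j_0}\lambda+\sigma_0$ is correct and matches Remark \ref{ccrit}), evaluate it at $x_0(t)$ to produce $e^{\gamma_0 K_0}e^{-2\sigma_0 t}$, and transfer the lower bound to $u$ by a comparison estimate with constant independent of $K_0$. However, there is a genuine gap at the single most important step: the non-degeneracy $\kappa_0\neq 0$. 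You assert it because ``the integrand does not change sign, so no cancellation occurs.'' The coefficient in question is the asymptotic coefficient $a_c^{\rm I}$ of the solution $A_c$ of the third-order ODE \eqref{eq:AAA}, which contains the potential term $-4Q^3$; it is \emph{not} given by a projection of the source onto a sign-definite dual mode, and no such positivity argument can decide the issue. Indeed, as you yourself observe, in the integrable cases (KdV, mKdV) the analogous coefficient vanishes identically, although the construction there has exactly the same structure (an exponential times a power of $Q$ as source, a Schr\"odinger-type operator inside the ODE); a naive no-cancellation argument would apply equally to those cases and is therefore self-contradictory. This is precisely where the paper invests its main effort: Lemma \ref{pr:hypo2} (iii) proves $a_c^{\rm I}\neq 0$ by contradiction, reducing to a dual problem (Claim \ref{claimB0}), establishing a Virial-type coercivity estimate (Claim \ref{le:B}) under orthogonality conditions, and finally checking numerically that the explicit quantity $k(c)$ in \eqref{def:kc} satisfies $k(c)\le 0.5<1$ on $[\frac 34,1]$. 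The restriction \eqref{speeds} in Theorem \ref{th:1} exists exactly because this non-degeneracy is only known for speed ratios in $[\frac 34,1)$. Your one-line sign argument silently replaces the heart of the proof.

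Two further steps in your sketch would also fail as written. First, your approximate solution contains only the first correction $v$. With it alone, the equation error contains terms of the form $\partial_x(R_l^3 Z_{j,k})$ with $l<j$ (already present when $N=2$, with $j=2$, $k=l=1$), whose size is $e^{-(\sigma_0+\gamma_{j,k}^{\rm I}(c_l-c_j))t}$; for speeds close to each other this is barely better than $e^{-\sigma_0 t}$ and far from $O(e^{-2\sigma_0 t})$. The comparison estimate then only yields $\|u-w\|_{H^1}\lesssim e^{-(1+\delta)\sigma_0 t}$ with $\delta<1$, which swamps the main term $e^{\gamma_0 K_0}e^{-2\sigma_0 t}$ for large $t$. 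This is why the paper introduces the second-generation corrections $Z_{j,k,l}^{\rm I,II}$ (Lemma \ref{le:prtri} and Section \ref{sec:V}) before it can prove the error bound \eqref{eq:eqV}. Second, you dismiss the remaining pairs on the grounds that they produce ``a strictly larger time-rate''; this fails when the minimum in \eqref{defjz} is attained, or nearly attained, by several pairs, in which case several $Z_{j,k}$ contribute at the same order at $x_0(t)$. The paper does not use domination at all: it proves that all coefficients $a_{j,k}^{\rm I}$, $j<k$, have the \emph{same sign} (property \eqref{IAjk}, obtained from the continuity statement in Lemma \ref{pr:hypo2} (iv)), so the contributions add constructively rather than cancel.
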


Fix $K_0>0$ such that $C_1 e^{\gamma_0 K_0} >2$.
Combining Lemma \ref{le:droiteN} and Proposition \ref{prop:1N},  we obtain a contradiction for  $t>\max(t_0(K_0),t_1(K_0))$.

\medskip

Let us sketch the proof of Proposition \ref{prop:1N}.
The key point is to construct  an explicit approximate solution $V(t)$ of the problem as $t\to +\infty$ (see Section \ref{sec:new4}). We briefly sketch the construction of $V$  in the 2-soliton case, i.e. for $N=2$.  Let
$$V =R_1 +R_2  +Z, \quad R_1(t,x)=Q(x-t-\Delta_1), \ R_2(t,x)=Q_{c_2}(x-c_2t-\Delta_2),
$$
where 
$$
Z_t + (Z_{xx} + 4 R_1^3 Z  )_x \approx - 4 (R_1^3 R_2)_x.
$$
In the equation of $Z$ above,  we focus on the main interaction term (see Section~\ref{sec:new4} for the control of all error terms).
For this term,  we replace $R_{2}$ by its asymptotics for $x-c_2t \gg 1$:
$$  R_1^3 R_2 \approx  10^{\frac 13}   c_2^{\frac 13} e^{-\sqrt{c_2} (x-c_2t-\Delta_2)} R_1^3 =
c_0e^{-\sqrt{c_2}(1-c_2)t}e^{-\sqrt{c_2}(x-t -\Delta_1)} R_1^3,$$
where $c_0=10^{\frac 13}c_2^{\frac 13} e^{-\sqrt{c_2}(\Delta_1{-}\Delta_2)}$. An explicit solution 
of 
$$
Z_t + (Z_{xx} + 4 R_1^3 Z  )_x = - 4 c_0  \left(e^{-\sqrt{c_2}(1-c_2)t}e^{-\sqrt{c_2}(x-t -\Delta_1)} R_1^3\right)_x$$
is
$Z(t,x) = 4 c_0 e^{-\sqrt{c_2}  (1-c_2)t} A(x{-}t{-}\Delta_1)$ where   $A(x)$   satisfies the following ODE
\be\label{ode}
(- A'' + A - 4 Q^3 A)' + \sqrt{c_2}(1-c_2) A =  (e^{-\sqrt{c_2} x} Q^3)'.
\ee
Moreover,  for $\frac 34 <c_2<1$, we prove the following asymptotic property  
\be\label{generic}A(x) \mathop\sim_{x\to  +\infty} a e^{-\gamma_0 x}\quad \hbox{where
$a\neq 0$ and $\gamma_0 =  \sqrt{1  - \frac 34  {c_2}} - \frac 12\sqrt{c_2}$ }.\ee
 The proof of   property \eqref{generic} requires more than  standard ODE techniques, and involves  Virial type arguments, introduced in \cite{MM1}, \cite{yvanSIAM} and \cite{MMas1} to study the flow of the evolution problem \eqref{kdv} near   solitons.

 It follows from \eqref{generic} that $V$ 
  satisfies  the following lower bound, for $\kappa>0$,
  $t$ and $x$ large enough,
\be\label{lowV}
|V(t,x )|\geq \kappa   e^{-\gamma_0 (x-t)}  e^{-\sqrt{c_2} (1-c_2)t}.
\ee
Such an approximate solution $V(t)$ being constructed (the actual approximate solution is more refined), by usual techniques (\cite{MMT}, \cite{Ma2}), we compare the solution $u(t)$ with $V(t)$, for $t$ large: 
$$\|u(t)-V(t)\|_{H^1} \leq C e^{-2 \sqrt{c_2} (1-c_2)t}$$
and we  obtain the desired lower bound on $u(t,x)$ at $x=x_0(t)$ (see Section~\ref{sec:4}).

\medbreak

\noindent{\bf Comment on assumption \eqref{speeds}.} The assumption on the speeds $c_1,\ldots,c_N$ in  \eqref{speeds} is   not optimal, and we even conjecture that the result holds for any choice of speeds.
However,   we believe that to obtain the more general result will require much harder analyis.
Even  considering the simplest case of a $2$-soliton with speeds $c_1=1$ and $c_2$, we see several 
difficulties to extend the nonexistence result to any $0<c_2<1$.
\begin{enumerate}[1)]
\item For $0<c_2\le 1/3$, the method outlined above does not work direclty for algebraic reasons. Indeed, 
$0<c_2\le 1/3$ implies $\gamma_0 \geq \sqrt{c_2}$, and thus  the approximate solution has the same decay as   $R_2$, and no direct contradiction can follow from such a lower bound.
This  is related to the fact that  the proof of  inelasticity for $c_2$ close to $0$  in \cite{MMcol1} requires a higher order expansion than the one in \cite{MMalmost} for $c_2$ close to~$1$.

\item For all $\frac 13 <c_2<1$, we expect that the function $A(x)$ defined above has the generic decay 
\eqref{generic}, which is essential in our proof, but we were able to prove this fact only for $c_2\in [c_0,1]$, where $c_0<\frac 34$ is close to $\frac 34$.

\item The restriction \eqref{speeds} on the values of $c_2$ also comes from the proof of the decay property obtained in Lemma \ref{le:droiteN}. We prove Lemma \ref{le:droiteN} by known and simple energy localization arguments, which are clearly not optimal. Replacing these arguments by a sharper asymptotic analysis would certainly improve the range of admissible  $c_2$, but   without  approching the special value $c_2=\frac 13$.

\end{enumerate}

\noindent In the case of $2$-solitons, it is proved in  Proposition \ref{th:UNIQ} that $N^-=2$ and 
$c_1^-=1$, $c_2^-=c_2$ without condition on $c_2$. For $N\geq 3$, it is not clear how to prove such a rigidity property, or even how to obtain a lower bound such as \eqref{sp1} without a strong   assumption on the speeds at $+\infty$ such as \eqref{speeds}. In particular, we cannot replace \eqref{speeds} by  $\frac 34 < c_j < 1$, $\forall j=2,\ldots, N$.


\medskip

\noindent{\bf Acknowledgement.}  
This work is   partly supported by the project ERC 291214 BLOWDISOL.


\section{Rigidity of multi-soliton parameters}
In this section, using the three conservation laws \eqref{mass}, \eqref{energy} and \eqref{int},  we prove Lemma \ref{sp}, which controls the speeds at $-\infty$ for an outgoing multi-soliton under assumption \eqref{speeds}. We also state and prove an independent unconditional result of rigidity of the speeds at $\pm \infty$ for a $2$-soliton.

Note that the arguments can be extended to other power nonlinearities.

\subsection{Conservation laws on ingoing $N$-solitons}
We first claim the following result to be proved in Appendix \ref{ap:B}.

\begin{lemma}\label{lun}
Let $N\geq 2$, $0<c_N<\ldots<c_1$ and $\Delta_1, \ldots,\Delta_N\in \RR$.
Let $u(t)$ be the solution of \eqref{kdv} satisfying
\begin{equation}\label{pinfdeux}
\lim_{t\to +\infty} \bigg\|u(t)- \sum_{j=1}^N Q_{c_j} (.- c_j t  -\Delta_j)\bigg\|_{H^1} =0.
\end{equation}
Then, for all $t$,
$$
\int u^2(t) = \sum_{j=1}^N \int Q_{c_j}^2= \left(\sum_{j=1}^N  c_j^{\frac 1{6}}\right) \int Q^2 .
$$$$
E(u(t)) = \sum_{j=1}^N E(Q_{c_j})= \left(\sum_{j=1}^N  c_j^{\frac 7{6}}\right) E(Q).
$$
Moreover, $u(t)\in L^1$ and 
$$
\int u(t) = \sum_{j=1}^N \int Q_{c_j}= \left(\sum_{j=1}^N  c_j^{-\frac 1{6}}\right) \int Q .
$$
\end{lemma}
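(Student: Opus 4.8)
The plan is to combine the three conservation laws \eqref{mass}, \eqref{energy} and \eqref{int} with the asymptotic decoupling of the solitons as $t\to+\infty$, reducing each identity to an elementary scaling computation for $Q_c$. First I would record the scaling identities. Writing $Q_c(x)=c^{1/3}Q(\sqrt c\,x)$ and substituting $y=\sqrt c\,x$ gives $\int Q_c^2=c^{1/6}\int Q^2$ and $\int Q_c=c^{-1/6}\int Q$; since $\partial_xQ_c(x)=c^{5/6}Q'(\sqrt c\,x)$, one also gets $\int(\partial_xQ_c)^2=c^{7/6}\int(Q')^2$ and $\int Q_c^5=c^{7/6}\int Q^5$, whence $E(Q_c)=c^{7/6}E(Q)$. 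These supply the second equality in each of the three claimed identities.

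Set $R(t,x)=\sum_{j=1}^N Q_{c_j}(x-c_jt-\Delta_j)$ and $\eta(t)=u(t)-R(t)$, so that $\|\eta(t)\|_{H^1}\to0$. The elementary fact I would use repeatedly is \emph{decoupling}: if $f,g$ decay exponentially then $\int f(\cdot-a)g(\cdot-b)\to0$ as $|a-b|\to\infty$; since $c_j\neq c_k$, the centers $c_jt+\Delta_j$ separate linearly in $t$, so every integral coupling two distinct solitons tends to $0$ exponentially. For the mass, $\int u^2(t)$ is conserved and
\[
\int u^2 = \int R^2 + 2\int R\,\eta + \int \eta^2,
\]
where $\int\eta^2\to0$, $|\int R\,\eta|\le\|R\|_{L^2}\|\eta\|_{L^2}\to0$, and $\int R^2\to\sum_j\int Q_{c_j}^2$ by decoupling; this gives the first identity. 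For the energy, $E$ is locally Lipschitz on $H^1$ (the quintic term being controlled via $H^1\hookrightarrow L^\infty$), so $E(u(t))-E(R(t))\to0$, while $E(R(t))\to\sum_jE(Q_{c_j})$ again by decoupling of the quadratic and quintic cross terms; combined with conservation of energy this yields the second identity.

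The integral is the delicate point and I expect it to be the main obstacle, since $\int u$ is only \emph{formally} conserved. I would first secure $u(t)\in L^1$: the regularity $u(t)\in\cap_{s\ge1}H^s$ from the construction already forces $u$ and $\partial_x^2u$ to vanish at $\pm\infty$, so the flux $\partial_x^2u+u^4$ vanishes there and $\int u(t)$ is constant once it is finite; finiteness itself I would obtain from a persistence-of-decay (weighted $L^2$) estimate controlling $\int x^2u^2(t)$, which grows at most polynomially under the flow because $\|u(t)\|_{H^1}$ stays bounded, so that $\int|u|\le\big(\int(1+x^2)u^2\big)^{1/2}\big(\int(1+x^2)^{-1}\big)^{1/2}<\infty$. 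To evaluate the constant, observe that $\int R(t)=\sum_j\int Q_{c_j}$ is \emph{exactly} constant in $t$ by translation invariance, hence $\int\eta(t)=\int u(t)-\sum_j\int Q_{c_j}$ is constant, and it suffices to prove $\int\eta(t)\to0$.

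The last step follows from $\|\eta(t)\|_{L^1}\to0$, which I would deduce by splitting $\int|\eta|=\int_{|x|<M}+\int_{|x|>M}$, bounding the inner part by $\sqrt{2M}\,\|\eta\|_{L^2}\le\sqrt{2M}\,Ce^{-\gamma t}$ and the outer part by $\big(\int x^2\eta^2\big)^{1/2}\big(\int_{|x|>M}x^{-2}\big)^{1/2}\le C(1+t)\sqrt{2/M}$, using the exponential $H^1$-decay of $\eta$ and the polynomial weighted bound on $\eta=u-R$; optimizing with $M=M(t)\to\infty$ (e.g. $M=e^{\gamma t}$) sends both terms to $0$. Therefore $\int\eta(t)\equiv0$ and $\int u=\sum_j\int Q_{c_j}$, completing the proof. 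The genuinely technical ingredient throughout is thus the weighted estimate justifying $u(t)\in L^1$ and the convergence $\int\eta\to0$; the mass and energy parts are soft consequences of conservation, continuity, and exponential separation.
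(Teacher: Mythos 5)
Your scaling identities and your treatment of the mass and energy identities are correct and are exactly what the paper does (it treats these as immediate consequences of conservation, the $H^1$ convergence, and exponential decoupling of the solitons). The genuine gap is in the step you yourself identify as the crux: the weighted bound $\int x^2u^2(t)\leq C(1+t)^2$. You propose to obtain it by a ``persistence-of-decay'' estimate, i.e.\ by propagating a weighted $L^2$ bound under the flow with polynomial growth. But any persistence argument needs a base case: a finite time $t_0$ at which $\int x^2u^2(t_0)<\infty$ is already known. For the outgoing multi-soliton there is no such time: $u$ is characterized only by its behavior as $t\to+\infty$ (in \cite{Ma2} it is built as a limit of solutions $u_n$ with $u_n(T_n)=R(T_n)$, $T_n\to\infty$, the limit being taken in $H^s$ on compact time intervals with no uniform weighted control). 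Propagating backward from $t=T_n$ gives a bound that blows up as $T_n\to\infty$, and propagating forward from $t=0$ is impossible because nothing is known about the spatial decay of $u(0)$. So your argument is circular: finiteness of the weighted norm at some finite time is precisely the nontrivial content of the claim $u(t)\in L^1$.

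This is where the paper's proof puts all the work: the spatial decay must be \emph{extracted from the $t\to+\infty$ asymptotics}, not propagated from data. Two separate inputs are used. Behind the solitons, the monotonicity result (Lemma \ref{le:mono}, applied as in Step~1 of the proof of Lemma \ref{le:droiteN}, with the roles of left/right and $\pm\infty$ exchanged) gives the uniform-in-time exponential bound \eqref{dd} on the mass in the region $x<c_Nt-x_0$. Ahead of the fastest soliton, Lemma 7.4 of \cite{Munoz}, which exploits the exponential-in-time convergence $\|u(t)-R(t)\|_{H^1}\leq Ce^{-\gamma t}$ (itself a consequence of \cite{Ma2}, not of the hypothesis \eqref{pinfdeux} alone --- you use this rate without attribution), gives the quadratically weighted bound \eqref{gg}. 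These two estimates yield $\int_{x<c_Nt}(x-c_Nt)^2w^2+\int_{x>c_1t}(x-c_1t)^2w^2\leq C$, after which a splitting argument structurally identical to yours (inner region controlled by the exponentially small $L^2$ norm, outer regions by the weighted bounds) proves $\int|w(t)|\to0$, and conservation of the integral \eqref{int} concludes. So your overall architecture is the right one, but the persistence step must be replaced by these monotonicity-type estimates; as written, the proof has a genuine hole exactly at its technical core.
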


\subsection{Proof of Lemma \ref{sp}}\label{sec:2.1}
 
Using Lemma \ref{lun} and \eqref{msm}-\eqref{msp}, the following identities hold
\be\label{id}
\sum_{j=1}^N  c_j^{\frac 7{6}} = \sum_{j=1}^{N^-}  (c_j^-)^{\frac 7{6}},\quad 
\sum_{j=1}^N  c_j^{-\frac 1{6}} = \sum_{j=1}^{N^-} (c_j^-)^{-\frac 1{6}},\quad 
\sum_{j=1}^N  c_j^{\frac 1{6}} = \sum_{j=1}^{N^-}  (c_j^-)^{\frac 1{6}}.
\ee
Consider the function $f(x)$ for $x>0$ defined as
$$
f(x) = x^7 +\frac 3 x - 4x\quad \hbox{so that}\quad f'(x)=7 x^6 - \frac 3{x^2} - 4,\quad
f''(x) = 42 x^5 + \frac {6}{x^3}>0.
$$
In particular, $f(1)=f'(1)=0$ and elementary computations show that
$$
\forall x>0,\quad f''(x) \geq f''\left( (3/35)^{\frac 18}\right) = \frac {48}{5} \left(\frac {35}3\right)^{\frac 3{8}}:=2 m_1,
$$
$$
 \frac 34 \leq  x \leq 1 \ \Rightarrow \ 
f''(x) \leq f''(1)=48 .
$$
We deduce:
\be\label{nf}  \forall x>0,\ f(x) \geq m_1 (1-x)^2;\quad
\forall x\in [\tfrac 34 ,1],\
 f(x)\leq 24 (1-x)^2.
\ee
Combining the identities in \eqref{id}, we have
$$
 \sum_{j=1}^{N^-} f\left((c_j^-)^{\frac 16}\right) =\sum_{j=1}^N f\left(c_j^{\frac 16}\right)  .
$$
Using \eqref{nf}, \eqref{speeds} and $1-c_j^{\frac 16} \leq 4 (1-(\frac 34)^{\frac 16}) (1-c_j)$ (since $\frac 34 <c_j<1$),
\begin{align*}
&\left|1-(c_j^-)^{\frac 16}\right|^2 \leq 
\sum_{j=1}^{N^-} \left|1-(c_j^-)^{\frac 16}\right|^2   \leq \frac {24}{m_1}  \sum_{j=2}^N\left|1-c_j^{\frac 16}\right|^2 \\
& \leq 80 \left(\frac 3 {35}\right)^{\frac 38} \left( 1-\left(\frac 34\right)^{\frac 16}\right)^2    \sum_{j=2}^N  \left(1 -   {c_j} \right)^2   
\leq  5\left(\frac 3 {35}\right)^{\frac 38} \left( 1-\left(\frac 34\right)^{\frac 16}\right)^2 :=m_2 .
\end{align*}
Thus, for all $j=1,\ldots,N^-$, by elementary computations,
$$
\frac {16}{25} <  \left(1 -  {\sqrt{m_2 }} \right)^{6} \leq c_j^-\leq \left(1+  {\sqrt{ m_2 }}  \right)^{6}  <\frac 32 .
$$

Now, we prove the control of $N^-$. Indeed, we have
\begin{align*}
& \left| \sqrt{N} - \left( \sum_{j=1}^N c_j^{\frac 16}\right)^{\frac 12}\right|  =\left| \left( \sum_{j=2}^N 1^2\right)^{\frac 12}- \left( \sum_{j=1}^N \left|c_j^{\frac 1{12}}\right|^2\right)^{\frac 12}\right|   \leq 
\left( \sum_{j=2}^{N} \left|1-c_j^{\frac 1{12}}\right|^2 \right)^{\frac 12} 
\\& \leq \left(1+\left(\tfrac 34\right)^{\frac 1{12}}\right)^{-1} \left( \sum_{j=1}^{N} \left|1-c_j^{\frac 1{6}}\right|^2 \right)^{\frac 12}\leq 
\left(1+\left(\tfrac 34\right)^{\frac 1{12}}\right)^{-1} \left( 1-\left(\frac 34\right)^{\frac 16}\right) :=a_1,
\end{align*}
\begin{align*}
 &\left| \sqrt{N^-} - \left( \sum_{j=1}^{N^-} (c_j^-)^{\frac 16}\right)^{\frac 12}\right|   =\left| \left( \sum_{j=1}^{N^-} 1\right)^{\frac 12}- \left( \sum_{j=1}^{N^-} (c_j^-)^{\frac 16}\right)^{\frac 12}\right|   \leq 
\left( \sum_{j=1}^{N^-} \left|1-(c_j^-)^{\frac 1{12}}\right|^2 \right)^{\frac 12} 
\\& \leq \left(1+\left(\tfrac 45\right)^{\frac 1{6}}\right)^{-1} \left( \sum_{j=1}^{N^-} \left|1-(c_j^-)^{\frac 1{6}}\right|^2 \right)^{\frac 12}\leq 
 \left(1+\left(\tfrac 45\right)^{\frac 1{6}}\right)^{-1}5^{\frac 12}\left(\frac 3 {35}\right)^{\frac 3{16}} \left( 1-\left(\frac 34\right)^{\frac 16}\right) := a_2 .
\end{align*}
Thus, by \eqref{id},
$$
\left| \sqrt{N}- \sqrt{N^-}\right| \leq a_1+a_2:=a$$
and so by explicit computations,
$$
\left| N^- - N\right| \leq 2 a \sqrt{N} + a^2 \leq \left(2a + \frac {a^2}{\sqrt{2}}\right)\sqrt{N}
< \frac {\sqrt{N}}{8}.
$$
In particular, if $N\leq 64$, then $N^-=N$. 

\subsection{Rigidity result for two solitons}\label{sec:2.2}
 
 In the case of an ingoing $2$-soliton, we prove an unconditional result.
 For any $0<c<1$, we claim that if the ingoing $2$-soliton  is also an outgoing $N$-soliton, then  $N=2$ and the speeds at $+\infty$ and $-\infty$ are the same.
In particular, it is a symmetric $2$-soliton. This result is not needed for the proof of Theorem \ref{th:1} but it is proved for its own interest.  Such question remains open for $N\geq 3$. 
 
\begin{proposition}[Rigidity of  $2$-solitons]\label{th:UNIQ}
Let $0<c<1$.
Let $u(t)$ be the outgoing $2$-soliton   of \eqref{kdv} satisfying
\begin{equation}\label{minf}
\lim_{t\to +\infty} \|u(t)- Q (.- t ) -Q_{c} (. - c t )\|_{H^1} = 0.
\end{equation} 
Assume that $u(t)$ is   an ingoing  multi-soliton, i.e. there exist
$0<c_N<\ldots<c_1$, and $\Delta_1, \ldots,\Delta_N$ such that
\begin{equation}\label{pinf}
\lim_{t\to -\infty} \bigg\|u(t)- \sum_{j=1}^N Q_{c_j} (.- c_j t  -\Delta_j)\bigg\|_{H^1} =0.
\end{equation}
Then,
\begin{itemize}
\item[{\rm (i)}] $u(t)$ is a pure $2$-soliton,
$$N=2 \quad \hbox{and } \quad c_1=1,\quad c_2=c.$$
\item[{\rm (ii)}] 
There exist $T_0, \ Y_0\in \RR$ such that
\begin{equation}\label{eq:sym}
 u(t,x)=u(-t+T_0,-x+Y_0).
\end{equation}
\end{itemize}
\end{proposition}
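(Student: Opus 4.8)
The plan is to deduce (i) from the three conservation laws alone, and then to obtain (ii) from the discrete symmetry of \eqref{kdv} combined with the uniqueness of outgoing multi-solitons.

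\textit{Reduction to a moment identity.} First I would compute the conserved mass, energy and integral of $u$ from both its $+\infty$ and $-\infty$ asymptotics. Applying Lemma \ref{lun} to the outgoing description \eqref{minf}, and, by time reversal (the conserved quantities being constant in $t$), to the ingoing description \eqref{pinf}, one gets exactly as in \eqref{id} three identities. Writing $a = c^{1/6}\in(0,1)$ and $x_j = c_j^{1/6}$ (so that $x_1 > \cdots > x_N > 0$), these read
\[
\sum_{j=1}^N x_j = 1+a, \qquad \sum_{j=1}^N \frac{1}{x_j} = 1 + \frac{1}{a}, \qquad \sum_{j=1}^N x_j^7 = 1 + a^7 .
\]
The whole of (i) then reduces to the purely algebraic claim that the only solution with the $x_j>0$ distinct is $N=2$ and $\{x_1,x_2\}=\{1,a\}$, i.e. $c_1=1$, $c_2=c$.

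\textit{The crux: proving $N=2$.} Let $x_1 = \max_j x_j$. From $x_j^7 = x_j^6\,x_j \le x_1^6\,x_j$ and the energy identity, $1+a^7 \le x_1^6(1+a)$, hence $x_1 \ge \rho := \big((1+a^7)/(1+a)\big)^{1/6}$; here $\rho<1$, and an elementary estimate gives $\rho > 1-3a$, so in particular $\rho \ge \sqrt a$. Splitting off $x_1$ and setting $P = \sum_{j\ge 2} x_j = (1+a)-x_1$ and $R = \sum_{j\ge 2} 1/x_j = (1+1/a)-1/x_1$, the AM–HM inequality gives $PR \ge (N-1)^2$. On the other hand the one–variable function $\Psi(x) := (1+a-x)(1+1/a-1/x) = (1+a)(1+1/a) + 1 - (1+1/a)x - (1+a)/x$ is maximal at $x=\sqrt a$, hence decreasing on $[\rho,\infty)\subset[\sqrt a,\infty)$; using $\rho<1$ (so $1/\rho>1$) and $\rho>1-3a$ (so $1+a-\rho<4a$),
\[
PR = \Psi(x_1) \le \Psi(\rho) = (1+a-\rho)(1+\tfrac1a-\tfrac1\rho) < (4a)\cdot\tfrac1a = 4 .
\]
Therefore $(N-1)^2 < 4$, i.e. $N\le 2$. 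Since $N=1$ is incompatible with the identities (it would force $(1+a)^7 = 1+a^7$), we conclude $N=2$; then $x_1+x_2 = 1+a$ and $1/x_1+1/x_2 = 1+1/a$ give $x_1 x_2 = a$, so $x_1,x_2$ are the roots of $t^2-(1+a)t+a = (t-1)(t-a)$, whence $c_1=1$, $c_2=c$. I expect this to be the main obstacle: the delicate point is that all three conservation laws are genuinely needed, since the energy alone supplies the bound $x_1\ge\rho$ without which $\Psi$ is unbounded (it blows up at $x_1=\sqrt a$), while mass and integral supply the AM–HM bound.

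\textit{The symmetry (ii).} By (i), $u$ is an ingoing $2$-soliton with speeds $1,c$ and shifts $\Delta_1,\Delta_2$. Since $(t,x)\mapsto u(-t,-x)$ maps solutions of \eqref{kdv} to solutions, and since $Q$ and $Q_c$ are even, the function $v(t,x):=u(-t,-x)$ is an \emph{outgoing} $2$-soliton with speeds $1,c$ (its $+\infty$ asymptotics being read off from the $-\infty$ asymptotics of $u$, with shifts $-\Delta_1,-\Delta_2$). Thus $u$ and $v$ are two outgoing $2$-solitons with the \emph{same} speeds $1,c$; as $c\neq1$, space–time translations realize every pair of shift parameters, so the uniqueness of outgoing multi-solitons from \cite{Ma2} provides $\tau,y\in\RR$ with $v(t,x) = u(t-\tau,x-y)$, that is $u(-t,-x) = u(t-\tau,x-y)$. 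Replacing $(t,x)$ by $(-t,-x)$ yields $u(t,x) = u(-t+T_0,-x+Y_0)$ with $T_0=-\tau$, $Y_0=-y$, which is \eqref{eq:sym}.
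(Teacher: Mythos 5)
Your proposal is correct, and part (i) is obtained by a genuinely different route from the paper's. Both arguments start identically: Lemma \ref{lun}, applied to the $+\infty$ asymptotics and (after time--space reversal) to the $-\infty$ asymptotics, reduces (i) to the algebraic statement that $\sum_j x_j=1+a$, $\sum_j x_j^7=1+a^7$, $\sum_j 1/x_j=1+1/a$ with $x_j>0$ forces $N=2$ and $\{x_1,x_2\}=\{1,a\}$. The paper proves this by constrained optimization: it minimizes $\sum_j 1/a_j$ on the set cut out by the mass and energy constraints, shows via Lagrange multipliers that at any critical point the $a_j$ take at most two distinct values, and then excludes $N\ge 3$ through a three-case analysis ($1/3\le x<1$; $k=1$, $0<x<1/3$; $2\le k\le N-1$, $0<x<1/3$), treating $N=2$ separately with mass and energy. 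You replace all of this by two elementary inequalities: the energy identity gives the lower bound $x_1\ge\rho=\big((1+a^7)/(1+a)\big)^{1/6}$ on the largest root, Cauchy--Schwarz gives $(N-1)^2\le PR=\Psi(x_1)$, and the monotonicity of $\Psi$ on $[\sqrt a,\infty)$ together with $\rho<1$ and $\rho>1-3a$ yields $\Psi(x_1)\le\Psi(\rho)<4$, hence $N\le 2$; then mass and integral alone pin down the pair via $t^2-(1+a)t+a=(t-1)(t-a)$. This is shorter, avoids the compactness/critical-point machinery, and needs no case distinction in $a$. One slip to repair: $\rho\ge\sqrt a$ does \emph{not} follow from $\rho>1-3a$, since $1-3a<\sqrt a$ once $a\gtrsim 0.18$; but the inequality $\rho\ge\sqrt a$ is itself true and elementary, because $(1+a^7)-a^3(1+a)=(1-a^3)(1-a^4)>0$ for $0<a<1$, so your monotonicity step is sound once this is cited directly rather than deduced from $\rho>1-3a$. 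Part (ii) is the same argument as the paper's (uniqueness of outgoing multi-solitons from \cite{Ma2} applied to $v(t,x)=u(-t,-x)$), merely written out in detail, including the correct observation that $c\neq 1$ is what allows a space--time translation to match the shift parameters.
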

Let $u(t)$ be a solution of \eqref{kdv} as in the statement of Proposition \ref{th:UNIQ}.
Property~(ii) is a direct consequence of (i) and the uniqueness result in \cite{Ma2}.

\smallskip

We now prove (i).
By Lemma~\ref{lun}, we have  
$$
  1+ c^{\frac 1{6}}= \sum_{j=1}^N  c_j^{\frac 1{6}},\quad 
 1+ c^{\frac 7 {6}} =  \sum_{j=1}^N  c_j^{\frac 1{6}} ,\quad
  1+ c^{-\frac 1{6}}  =  \sum_{j=1}^N  c_j^{-\frac 1{6}}.
$$
Setting $a_j = c_j^{\frac 1{6}}$ and $x=c^{\frac 1{6}}$,
Proposition \ref{th:UNIQ} (i)   follows from the following elementary result.
\begin{lemma}
Let $0<x<1$, $N\geq 2$   and $0<a_N<\ldots<a_1$ be such that
\be\label{ftrois} \sum_{j=1}^N a_ j = 1+x, \quad 
 \sum_{j=1}^N a_j^7  = 1 + x^7, \quad
 \sum_{j=1}^N \frac 1{a_j} = 1+\frac 1x.
\ee
Then, $N=2$, $a_1=1$ and $a_2=x$.
\end{lemma}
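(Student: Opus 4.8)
The plan is to reduce everything to proving that $N=2$. Once this is known, the first and third identities in \eqref{ftrois} give $a_1+a_2=1+x$ and $\frac1{a_1}+\frac1{a_2}=1+\frac1x=\frac{1+x}{x}$, whence $a_1a_2=\frac{a_1+a_2}{1/a_1+1/a_2}=x$; thus $a_1,a_2$ are the roots of $t^2-(1+x)t+x=(t-1)(t-x)$, and since $a_1>a_2$ and $x<1$ we conclude $a_1=1$, $a_2=x$. Notice that the seventh-power identity is not needed at this last stage: its role is precisely to exclude $N\ge 3$.

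To bound $N$, I would start from the Cauchy--Schwarz inequality $\big(\sum_j a_j\big)\big(\sum_j a_j^{-1}\big)\ge N^2$, which together with the first and third identities yields $N^2\le (1+x)(1+\tfrac1x)=2+x+\tfrac1x$. When $x$ stays away from $0$ this already forces $N=2$: for instance if $x\ge\tfrac12$ then $2+x+\tfrac1x\le\tfrac92<9$, so $N\le 2$. The whole difficulty lies in the regime $x\to 0^+$, where $2+x+\tfrac1x$ blows up and Cauchy--Schwarz alone permits arbitrarily large $N$; this is exactly where the seventh-power identity must enter.

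For small $x$ the idea is that the largest node $a_1$ is pinned near $1$, so that after removing it the remaining $N-1$ nodes must by themselves reproduce the contribution of $x$. Quantitatively, from $a_j\le a_1$ one gets $\sum_j a_j^7\le a_1^6\sum_j a_j$, i.e. $a_1^6\ge\frac{1+x^7}{1+x}\ge 1-x$, while $a_1^7\le\sum_j a_j^7=1+x^7$ gives $a_1\le(1+x^7)^{1/7}$; hence $1-x\le a_1\le 1+\tfrac{x^7}{7}$. Writing $a_1=1+\eta$ with $\eta\in[-x,\tfrac{x^7}{7}]$, one computes
\[
\big[(1+x)-a_1\big]\big[(1+\tfrac1x)-\tfrac1{a_1}\big]=1-\frac{\eta}{x}+\frac{\eta(x-\eta)}{1+\eta},
\]
and for $x\le\tfrac12$ the bounds on $\eta$ give $-\eta/x\le 1$ and $\big|\frac{\eta(x-\eta)}{1+\eta}\big|\le \frac{2x^2}{1-x}\le 1$, so the quantity is $\le 3<4$. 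Applying Cauchy--Schwarz to the $N-1$ remaining nodes, $\big(\sum_{j\ge2}a_j\big)\big(\sum_{j\ge2}a_j^{-1}\big)\ge(N-1)^2$, and observing that the left-hand side is exactly the product just estimated, we obtain $(N-1)^2<4$, hence $N\le 2$.

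The main obstacle, and the only place requiring genuine care, is the sharp control of $a_1$ in the small-$x$ regime: because $\sum_j a_j^{-1}\sim\tfrac1x$ is extremely sensitive, a crude estimate such as $a_1<2^{1/7}$ does not suffice, and one really needs the two-sided bound $1-x\le a_1\le(1+x^7)^{1/7}$ furnished by the seventh-power identity to push the product $[(1+x)-a_1][(1+\tfrac1x)-\tfrac1{a_1}]$ strictly below $4$. Combining the two regimes yields $N\le 2$ for every $x\in(0,1)$, and with $N\ge 2$ this gives $N=2$; the conclusion then follows as in the first paragraph.
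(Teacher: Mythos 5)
Your proof is correct, and it takes a genuinely different route from the paper's. The paper treats $N=2$ using the first and second identities (a monotonicity argument for $b\mapsto a^7+b^7-1-(a+b-1)^7$), and excludes $N\geq 3$ by constrained optimization: it minimizes $\sum_j 1/a_j$ on the set cut out by the first two identities, shows via Lagrange multipliers that at any critical point the $a_j$ take exactly two distinct values, and then checks through a three-case analysis ($\tfrac13\leq x<1$; $k=1$, $x<\tfrac13$; $2\leq k\leq N-1$, $x<\tfrac13$) that every critical value exceeds $1+\tfrac1x$, contradicting the third identity. You replace all of this machinery by elementary inequalities: Cauchy--Schwarz, $(\sum_j a_j)(\sum_j a_j^{-1})\geq N^2$, settles the case $x\geq\tfrac12$ outright, while for $x\leq\tfrac12$ the seventh-power identity enters only through the two-sided pinning $1-x\leq a_1\leq 1+\tfrac{x^7}{7}$; your algebraic identity $[(1+x)-a_1][(1+\tfrac1x)-\tfrac1{a_1}]=1-\tfrac{\eta}{x}+\tfrac{\eta(x-\eta)}{1+\eta}$ (with $\eta=a_1-1$) and the bounds $-\tfrac{\eta}{x}\leq 1$, $\bigl|\tfrac{\eta(x-\eta)}{1+\eta}\bigr|\leq\tfrac{2x^2}{1-x}\leq 1$ all check out, so the product over the remaining $N-1$ values $a_2,\ldots,a_N$ is at most $3<4$, and Cauchy--Schwarz applied to those values gives $(N-1)^2\leq 3$, hence $N=2$. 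Your endgame also differs: you use the first and third identities (giving $a_1a_2=x$, so $a_1,a_2$ are the roots of $(t-1)(t-x)$), whereas the paper uses the first and second. What your approach buys is brevity and elementarity: no compactness argument, no Lagrange multipliers, no case analysis on the multiplier structure. What the paper's buys is a more systematic variational scheme (the minimum of the third conserved quantity on the manifold defined by the other two exceeds its prescribed value), which does not hinge on the numerical margins ($3<4$, the threshold $x=\tfrac12$) that your argument exploits and which the authors remark extends to other power nonlinearities---though your pinning of $a_1$ would adapt to exponents other than $7$ as well.
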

 
\begin{proof}
The case $N=2$ is easily treated. Let $a_1=a$ and $a_2=b$, $0<b<a$ be such that 
\begin{align*}
& a+b = 1+x,\\
&  a^7 +   b^7 = 1 + x^7 = 1+(a+b-1)^7.
\end{align*}
Of course, $a=1$, $b=x$ is a solution.
For $0<a<1+x$, $a\neq 1$, set $f(b) = a^7 +   b^7-  1-(a+b-1)^7$.
We see that $f(1)=0$. Moreover, $f'(b) = 7(b^6-(a+b-1)^6)$, and thus   $f'(b)$ has no zero
on $(0,1)$. It follows that $f$ has no zero on $[0,1)$ and so there are no other solution than $a=1$, $b=x$ for $N=2$.
 
\medskip

We now consider the case $N\geq 3$.
We define the bounded set
$$
\Omega=\Bigg\{(a_1,\ldots,a_N)\in (\RR_+^*)^N \ | \ \sum_{j=1}^N a_ j = 1+x,\  \sum_{j=1}^N a_j^7  = 1 + x^7\Bigg\}.
$$
and we look for the minimum on $\Omega$  of the following positive function $F$:
$$
F(a_1,\ldots,a_N) = \sum_{j=1}^N \frac 1{a_j}. 
$$
Since $\lim_{a_j\to 0^+} F(a_1,\ldots,a_N)=+\infty$, $F$ reaches it minimum on $\Omega$.

Note that if  $(a_1,\ldots,a_N)$ is a point of $\Omega$ where the gradients of the functions $\sum_{j=1}^N a_ j$ and $\sum_{j=1}^N a_j^7$ are colinear,then $a_j=a$ for all $j\in \{1,\ldots,N\}$. Thus,
$$
  N a  = 1+x,\quad 
  N a^7  = 1 + x^7.
$$
It follows that $\frac 1 N< a < \frac 2 N$ and   $a^6 = \frac {1+x^7}{1+x}>\frac 12$ which imply 
$(\frac 2N)^{6} \geq \frac 12$ and so $N\leq 2^{1+\frac 16}$. This is a contradiction and so no such point exists on $\Omega$.

Therefore, we can apply the method of Lagrange multipliers to characterize extrema of $F$ on $\Omega$.
For a critical point $(a_1,\ldots,a_N)\in \Omega$ of $F$, there exist $\lambda,\mu\in \RR$ such that
$$
\forall j=1,\ldots, N,\quad \frac 1{a_j^2}  = \lambda + \mu a_j^6.
$$
Let $g(\alpha) = \mu \alpha^4 + \lambda \alpha - 1$.
We see that $g'$ has at most one root on $[0,+\infty)$ and $g$ has at most two roots on $[0,+\infty)$.
We have already observe that $\Omega$ contains no point of the form $(a,\ldots,a)$.

Therefore, the $(a_j)$   take exactly two different values:  there exist $0<b<a$ and  $1\leq k < N$ such that
\begin{align*}
& k a +(N-k) b = 1+x,\\
& k a^7 + (N-k) b^7 = 1 + x^7 .
\end{align*}

$\bullet$ For   $1/3\leq x <1$ :
Note that for all $y>0$, $\frac 1 y  \geq 2-y$. Thus,
$$
\frac k a + \frac {N-k} b  \geq 2 N - k a - (N-k) b.
$$
Since
$$
ka + (N-k) b = 1+x,
$$
we find
$$
\frac k a + \frac {N-k} b  \geq 2N - 1- x > 4\geq  1+ \frac 1x.
$$
It follows that at such a critical point, $F$ is strictly greater that $1+\frac 1x$.

\medskip

$\bullet$  For  $k=1$, $0<x<1/3$.
Then,
\begin{align*}
& a -1 +(N-1) b = x,\\
& a^7-1 + (N-1) b^7 = x^7 .
\end{align*}
Since
$$
|a^7 - 1| = |a-1| (a^6+a^5+a^4+a^3+a^2+a+1)\geq  |a-1|,
$$
we obtain from the second identity
$$
|a-1|\leq x^7 + (N-1) b^7.
$$
Combining this with the first identity and then using $b<\frac 23$ (since $3 b<a+2b\leq 1+x<2$),  we get
$$
(N-1) b < x + x^7 + (N-1) b^7 < \frac {11}{10} x + \frac {N-1}{10} b,
$$
and so $\frac 9{11} (N-1)b < x$. In particular
$$
 \frac {N-1}{b} \geq \frac 9{11} \frac {(N-1)^2}{x} \geq \frac 9{11} \frac 4 x> 1 + \frac 1 x.
$$
Again, at such a critical point, $F$ is strictly greater that $1+\frac 1x$.

\medskip

$\bullet$  For  $2\leq k\leq N-1$, $0<x<1/3$.
Since $0<b<a$ and
$$
Nb <ka + (N-k) b = 1+x,
$$
we have
$$
a \leq \frac {1+x} k,\quad b \leq \frac {1+x} N.
$$
Thus,
$$
1+x^7= ka^7 + (N-k) b^7 \leq \left( k^{-6} + N^{-6} \right) (1+x)^7
\leq  \left( 2^{-6} + 3^{-6} \right) \left( \frac 43\right)^7< 1,
$$
a contradiction. This means that no such critical point exist in this case.
\end{proof}

\section{Pointwise decay estimates for ingoing multisoliton}\label{sec:3}

This section is devoted to the proof of Lemma \ref{le:droiteN} by standard monotonicity arguments (see e.g. \cite{MM1}, \cite{MMnonlinearity} and \cite{Kato}). 

\subsection{Monotonicity result}
Set
$$
\phi(x) = \frac 2 {\pi} \arctan(\exp\,  x),\quad
\phi'(x) = \frac 1{\pi \cosh x},
$$
so that 
\be\label{phi3}
\phi''' \leq \phi' \hbox{ on $\RR$.}
\ee
Let us recall the following result (see \cite{MMnonlinearity}) whose proof is given in Appendix \ref{ap:B} for the sake of completeness.
\begin{lemma}[Mass-energy monotonicity]\label{le:mono}
Let   $0<\sigma<\sigma'<c_0$ and $C_0>0$.
There exists $\alpha_0>0$ such that the following holds.
Let $u(t)$ be a solution of \eqref{kdv}  such that there exists $R>1$ with
\begin{align}
\forall  {t\in [t_1,t_2]},\quad    \|u(t)\|_{H^1} \leq C_0,  \quad \|u(t)\|_{L^2(x>c_0 t +R)} \leq \alpha_0. \label{eq:mono3}
\end{align}
Then, there exists $C=C(\sigma,\sigma',c_0,C_0,R)>0$ such that,   for all   $x_0>0$,
\begin{align}
& \int (u_x^2+u^2  )(t_2,x) \phi\left(\sqrt{\sigma} (x-c_0 t_2-x_0) \right)dx
\nonumber \\& \leq
2 \int \big( u_x^2 + u^2   \big)(t_1,x)\, \phi\big( \sqrt{\sigma} (x{-}c_0 t_1{-} (c_0{-}\sigma') (t_2{-}t_1) {-} x_0)\big) dx+ C e^{-\sqrt{\sigma} x_0}.\label{eq:mono4}
\end{align}\end{lemma}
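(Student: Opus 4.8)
The plan is to establish an almost-monotonicity identity for a localized mass--energy functional transported at the intermediate speed $\sigma'$, and then to convert it into \eqref{eq:mono4} at the two endpoints. Assume $t_1<t_2$ and set
$$
\psi(t,x)=\phi\big(\sqrt{\sigma}\,(x-y(t))\big),\qquad y(t)=c_0 t+(c_0-\sigma')(t_2-t)+x_0 .
$$
Then $\dot y=\sigma'$, while $y(t_2)=c_0t_2+x_0$ and $y(t_1)=c_0t_1+(c_0-\sigma')(t_2-t_1)+x_0$, so that the two weights in \eqref{eq:mono4} are exactly $\psi(t_2,\cdot)$ and $\psi(t_1,\cdot)$. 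We record $\psi_x=\sqrt{\sigma}\,\phi'>0$, $\psi_t=-\sigma'\psi_x$ and, by \eqref{phi3}, $\psi_{xxx}\le\sigma\,\psi_x$, while elementarily $|\psi_{xxx}|\le C\sigma\,\psi_x$. We may assume $x_0>2R$: for $x_0\le 2R$ the left-hand side of \eqref{eq:mono4} is at most $\|u(t_2)\|_{H^1}^2\le C_0^2\le Ce^{-\sqrt{\sigma}x_0}$ once $C=C(R,C_0)$ is enlarged, so the inequality is trivial. Finally it suffices to argue for smooth, fast-decaying solutions and pass to the limit in $H^1$.

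The central quantity is the localized energy--mass functional
$$
\mathcal{G}(t)=\int\Big(u_x^2+u^2-\tfrac25 u^5\Big)(t,x)\,\psi(t,x)\,dx,
$$
i.e. the density of \eqref{energy} and \eqref{mass} cut off by $\psi$. First I would differentiate $\mathcal G$, insert $u_t=-(u_{xx}+u^4)_x$, and integrate by parts. The quadratic part of $\tfrac{d}{dt}\mathcal G$ reduces to
$$
-3\!\int(u_{xx}^2+u_x^2)\,\psi_x+\int(u_x^2+u^2)(\psi_{xxx}+\psi_t),
$$
and since $\psi_{xxx}+\psi_t\le(\sigma-\sigma')\psi_x$ with $\sigma<\sigma'$, this is $\le-(\sigma'-\sigma)\int(u_x^2+u^2)\psi_x\le0$. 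The crucial algebraic point, and the step I expect to be the main obstacle, is that differentiating $\int u_x^2\psi$ creates the quintic term $-12\int u^2u_x^3\,\psi$, which carries $\psi$ rather than $\psi_x$ and cannot be converted by integration by parts into a $\psi_x$-localized form (every such attempt is tautological). It is cancelled exactly by the matching term arising from $-\tfrac25\tfrac{d}{dt}\int u^5\psi$; this is precisely why the energy density $-\tfrac25u^5$ must be built into $\mathcal G$. After this cancellation, every surviving nonlinear term has the form $\int P(u,u_x)\,\psi_x$ (or with $\psi_{xxx},\psi_t$, both $\lesssim\psi_x$), where $P$ is a monomial of degree $\ge 3$ in which $u_x$ appears to power at most $2$; explicitly, $16\int u^3u_x^2\psi_x$, $\tfrac85\int u^5\psi_x$, $-\int u^8\psi_x$, $-\tfrac25\int u^5\psi_{xxx}$ and $-\tfrac25\int u^5\psi_t$.

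These terms are controlled by splitting on the support of $\psi_x$, which concentrates near $y(t)\ge c_0t+x_0>c_0t+R$. On $\{x>c_0t+R\}$, Gagliardo--Nirenberg and \eqref{eq:mono3} give $\|u(t)\|_{L^\infty(x>c_0t+R)}\le C(\alpha_0C_0)^{1/2}=:\eta$, so each term is bounded by $C\eta\int(u_x^2+u^2)\psi_x$ and absorbed into the good quadratic term once $\alpha_0$ is small. On $\{x\le c_0t+R\}$ one has $x-y(t)\le R-x_0-(c_0-\sigma')(t_2-t)$, whence $\psi_x\le Ce^{\sqrt{\sigma}R}e^{-\sqrt{\sigma}x_0}e^{-\sqrt{\sigma}(c_0-\sigma')(t_2-t)}$ uniformly on that region; since $\int|u|^k\le C\|u\|_{H^1}^k\le CC_0^k$, this part contributes at most $Ce^{-\sqrt{\sigma}x_0}e^{-\sqrt{\sigma}(c_0-\sigma')(t_2-t)}$. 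Collecting,
$$
\frac{d}{dt}\mathcal G(t)\le -\tfrac12(\sigma'-\sigma)\!\int(u_x^2+u^2)\psi_x+Ce^{-\sqrt{\sigma}x_0}e^{-\sqrt{\sigma}(c_0-\sigma')(t_2-t)}.
$$
Because $c_0-\sigma'>0$, the time factor is integrable, and integrating over $[t_1,t_2]$ yields $\mathcal G(t_2)\le\mathcal G(t_1)+Ce^{-\sqrt{\sigma}x_0}$, uniformly in $t_2-t_1$.

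It remains to pass from $\mathcal G$ to $\int(u_x^2+u^2)\psi$ at each endpoint. Writing $\int(u_x^2+u^2)\psi=\mathcal G+\tfrac25\int u^5\psi$ and estimating $\tfrac25\int|u|^5\psi\le C\eta\int(u_x^2+u^2)\psi+Ce^{-\sqrt{\sigma}x_0}$ by the same right/left decomposition, one obtains at $t_2$ that $\int(u_x^2+u^2)(t_2)\psi(t_2)\le(1+C\eta)\mathcal G(t_2)+Ce^{-\sqrt{\sigma}x_0}$, and at $t_1$ that $\mathcal G(t_1)\le(1+C\eta)\int(u_x^2+u^2)(t_1)\psi(t_1)+Ce^{-\sqrt{\sigma}x_0}$. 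Chaining these with $\mathcal G(t_2)\le\mathcal G(t_1)+Ce^{-\sqrt{\sigma}x_0}$ produces the multiplicative constant $(1+C\eta)^2$, which is $\le 2$ as soon as $\alpha_0$ (hence $\eta$) is taken small enough; this is exactly \eqref{eq:mono4}. Thus the choice of $\alpha_0$ is dictated both by the absorption of the nonlinear terms and by forcing this constant below $2$, and the positivity of the speed gap $c_0-\sigma'$ is what makes the accumulated error genuinely $O(e^{-\sqrt{\sigma}x_0})$ independently of the length of $[t_1,t_2]$.
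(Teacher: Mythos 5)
Your proof is correct and follows essentially the same route as the paper: the same functional $J_{x_0}=\mathcal G$ with the same weight $\psi$ transported at speed $\sigma'$, the same almost-monotonicity inequality with errors absorbed via the smallness $\alpha_0$ on $\{x>c_0t+R\}$ and the exponential smallness of $\psi_x$ on $\{x<c_0t+R\}$, integration in time using $c_0-\sigma'>0$, and the same endpoint conversion yielding the factor $2$. The only difference is cosmetic: you carry out explicitly the derivative computation (including the cancellation of $-12\int u^2u_x^3\,\psi$ by the quintic term) that the paper simply cites from \cite{MMnonlinearity}, and your surviving terms and coefficients match the paper's formula exactly.
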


\subsection{Decay on the right. Proof of Lemma \ref{le:droiteN}}\label{sec:3.2}

\noindent {\it Step 1.} 
Decay  at $t=0$.  Let $0<\sigma_1<c_N^-$. We claim that there exists $C>0$ such that
\be\label{oldle13}
\forall x_0>0,\quad  \int  \left( u_x^2+u^2 \right) (0,x)  \phi(\sqrt{\sigma_1} (x-x_0) )dx  \leq C e^{- \sqrt{\sigma_1} \, x_0}.
\ee

Let $\sigma=\sigma_1$, $ \sigma' = \frac 12 (\sigma_1+c_N^-)$, $c_0=c_N^-$ and $C_0=\sup_t\|u(t)\|_{H^1}$. Let $\alpha_0$ be given by Lemma \ref{le:mono}. 
From \eqref{msp}, for $t_0>0$ large enough,  for all $t<-t_0$, for all $x$,
\begin{align*}
|u(t,x)|& \leq \sum_j |Q_{c_j}(x-c_jt-\Delta_j)|    + \left\|u(t)-\sum_j  Q_{c_j}(.-c_jt-\Delta_j) \right\|_{H^1}\\
& \leq C \sum_{j=1}^N e^{-\sqrt{c_j} (x-c_j t)} + \frac 12 {\alpha_0}.
\end{align*}
Thus, there exists $R>0$ such that
$$
\sup_{t\leq -t_0} \|u(t)\|_{L^2(x>   c_N^- t + R)}< \alpha_0.
$$
By possibly taking a larger $R$, we also have
$$
\sup_{t\leq 0} \|u(t)\|_{L^2(x>   c_N^- t + R)}< \alpha_0.
$$
Applying Lemma \ref{le:mono}  on $[t,0]$, for any $t<0$, for all $x_0>0$,
\begin{align*}
&\int  \left( u_x^2 + u^2\right) (0,x)  \phi(\sqrt{\sigma_1} (x-x_0)) dx \\& \leq 
2 \int \left(u_x^2+u^2\right)(t,x) \phi(\sqrt{\sigma_1} (x-x_0- \sigma' t) )dx + C e^{-\sqrt{\sigma_1 }x_0}.
\end{align*}
By \eqref{msp} and the definition of $\phi$,
$$\lim_{t\to -\infty}\int \left(u_x^2+u^2\right)(t,x)  \phi(\sqrt{\sigma_1} (x-x_0- \sigma' t) ) dx =0,$$
and \eqref{oldle13} follows.

\medskip

\noindent{\it Step 2.} Decay on the right for   $t>0$.
Let $\sigma_1 \leq \sigma_2<\sigma_3<1$. We claim
 \begin{equation}\label{infty2vrai}
 \forall x>0, \forall t>0,\quad 
|u(t,t+x)  | \leq  
 C \left( e^{-\frac 12 \sqrt{\sigma_1} \,(x+(1-\sigma_3) t)}+  e^{- \frac 12 \sqrt{\sigma_2}\, x}\right).
\end{equation}

Let $\sigma=\sigma_2$, $\sigma'=\sigma_3$,  $c_0=1$ and $C_0=\sup_t\|u(t)\|_{H^1}$. Let $\alpha_0$ be given by Lemma~\ref{le:mono}. 
As before, from \eqref{msp}, there exists $R>0$ such that
$$
\sup_{t\geq 0}\|u(t)\|_{L^2(x\geq  t+R)}< \alpha_0.
$$
Applying Lemma~\ref{le:mono},  for $t>0$, $x_0>0$,
\begin{align*}
& \int  \left( u_x^2 + u^2\right) (t,x) \phi(\sqrt{\sigma_2} (x-t-x_0)) dx \\ &\leq 
2 \int \left( u_x^2 + u^2\right) (0,x) \phi(\sqrt{\sigma_2}(x-(1-\sigma_3)t-x_0))dx + C e^{-\sqrt{\sigma_2} x_0}.
\end{align*}
Recall \eqref{oldle13},
$$
\forall x_0>0,\quad  \int  \left( u_x^2+u^2\right) (0,x)\phi(\sqrt{\sigma_1} (x-x_0)) dx  \leq C e^{- \sqrt{
\sigma_1} x_0}.
$$
Since   $\phi \geq \frac 12$ on $\RR^+$ and since 
  for all $y\in \RR$, $\phi(\sqrt{\sigma_2} y) \leq 2 \phi(\sqrt{\sigma_1} y)$ (by $\sigma_2\geq \sigma_1$), 
we obtain
\begin{align*}
&  \int_{x>t+x_0 } \left( u_x^2 + u^2\right) (t,x) dx 
\leq 2  \int  \left( u_x^2 + u^2\right) (t,x) \phi(\sqrt{\sigma_2} (x-t-x_0)) dx
\\ & \leq 4 \int (u_x^2+u^2)(0,x) \phi(\sqrt{\sigma_2} (x-(1-\sigma_3)t-x_0)) dx + C e^{- \sqrt{\sigma_2} x_0}\\
&\leq 8 \int (u_x^2+u^2)(0,x) \phi(\sqrt{\sigma_1} (x-(1-\sigma_3)t-x_0)) dx + C e^{- \sqrt{\sigma_2} x_0}\\
& \leq  C e^{- \sqrt{\sigma_1} \,(x_0+(1-\sigma_3) t)}+ C e^{- \sqrt{\sigma_2} x_0}.
\end{align*}
Estimate \eqref{infty2vrai} then follows from
$$
\|u\|_{L^\infty(x>t+x_0)}^2 \leq 2 \|u_x\|_{L^2(x>t+x_0)} \|u\|_{L^2(x>t+x_0)}\leq \|u_x\|_{L^2(x>t+x_0)}^2 +  \|u\|_{L^2(x>t+x_0)}^2  .
$$

\medskip

\noindent{\it Step 3.} End of the proof of Lemma \ref{le:droiteN}.
We first claim the following technical facts.
\begin{claim}\label{cl:bm}
\be\label{clbm1}
\frac {\sigma_0}{\gamma_0} \geq \sqrt{c_{j_0+1} c_{j_0}}.
\ee
\be\label{clbm0}
\frac { 4\  \sigma_0}{\frac {\sigma_0}{\gamma_0} -(1-c_{j_0})} <1,
\ee
\be\label{clbm}
   \frac {  \sigma_0}{\gamma_0} + c_{j_0} -\left( \max \left(\frac {4}{5} , \frac { 4  \sigma_0}{\frac {\sigma_0}{\gamma_0} -(1- c_{j_0})} \right)\right)^2   >  5  \sigma_0.
\ee
\end{claim}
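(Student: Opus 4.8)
The plan is to treat Claim \ref{cl:bm} as a purely elementary statement about the two speeds $a:=c_{j_0}$ and $b:=c_{j_0+1}$. First I would extract from assumption \eqref{speeds2} that $(1-c_j)^2\le \tfrac1{16}$ for every $j$, hence $c_j\in[\tfrac34,1]$, and combining this with the ordering $c_{j_0+1}<c_{j_0}$ gives $\tfrac34\le b<a\le1$; this is the only input from the hypotheses. The decisive preliminary step is to simplify the ratio $\sigma_0/\gamma_0$. Since $\gamma_0=\sqrt{a-\tfrac34 b}-\tfrac12\sqrt b$ by \eqref{defgamma} and $\sigma_0=\sqrt b\,(a-b)$ by \eqref{defjz}, multiplying numerator and denominator by the conjugate $\sqrt{a-\tfrac34 b}+\tfrac12\sqrt b$ and using that the resulting denominator is $(a-\tfrac34 b)-\tfrac14 b=a-b>0$ (which also confirms $\gamma_0>0$) collapses the fraction to
\[
\frac{\sigma_0}{\gamma_0}=\tfrac12\,c_{j_0+1}+\sqrt{c_{j_0+1}\big(c_{j_0}-\tfrac34 c_{j_0+1}\big)}=\tfrac12 b+\sqrt{b\big(a-\tfrac34 b\big)} .
\]
Every inequality of the claim then becomes a statement about $a,b$ on the triangle $\tfrac34\le b<a\le1$.

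With this identity, \eqref{clbm1} reads $\tfrac12 b+\sqrt{b(a-\tfrac34 b)}\ge\sqrt{ab}$; writing the difference $\sqrt{ab}-\sqrt{b(a-\tfrac34 b)}$ as $\tfrac34 b^2\big/\big(\sqrt{ab}+\sqrt{b(a-\tfrac34 b)}\big)$ and bounding the denominator below by $\tfrac32 b$ (using $a\ge b$) reduces it to $a\ge b$, which holds. For \eqref{clbm0} I would first check positivity of the denominator: by \eqref{clbm1}, $\tfrac{\sigma_0}{\gamma_0}\ge\sqrt{ab}\ge b\ge\tfrac34$, while $1-a\le\tfrac14$, so $\tfrac{\sigma_0}{\gamma_0}-(1-a)\ge\tfrac12>0$. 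The inequality $4\sigma_0<\tfrac{\sigma_0}{\gamma_0}-(1-a)$ is then $h(a,b):=\tfrac{\sigma_0}{\gamma_0}-(1-a)-4\sigma_0>0$. Differentiating, $\partial_a h=\tfrac{\sqrt b}{2\sqrt{a-\frac34 b}}+1-4\sqrt b$ is decreasing in $a$ and maximal at $a=b$, where it equals $2-4\sqrt b\le 2-2\sqrt3<0$; hence $h$ is decreasing in $a$ and its minimum on $a\in(b,1]$ is at $a=1$. Setting $s=\sqrt b\in[\tfrac{\sqrt3}2,1]$, the residual one-variable inequality is increasing in $s$, so it suffices to verify it at $s=\tfrac{\sqrt3}2$, where it reduces to the explicit bound $\sqrt3+\sqrt7>4$.

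The real work is \eqref{clbm}, which I would organize around $G:=\tfrac{\sigma_0}{\gamma_0}+a-5\sigma_0$ and $f:=4\sigma_0\big/\big(\tfrac{\sigma_0}{\gamma_0}-(1-a)\big)$, noting from \eqref{clbm0} that $f<1$. Since $\big(\max(\tfrac45,f)\big)^2=\max\big(\tfrac{16}{25},f^2\big)$, the claim is equivalent to the two inequalities $G>\tfrac{16}{25}$ and $G>f^2$. The first I would prove exactly as above: $\partial_a G=\tfrac{\sqrt b}{2\sqrt{a-\frac34 b}}+1-5\sqrt b$ is negative on the triangle (maximal at $a=b$, where it equals $2-5\sqrt b<0$), so $G$ is minimized at $a=1$, and its one-variable minimum at $s=\tfrac{\sqrt3}2$ already exceeds $\tfrac{16}{25}$ comfortably. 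Because the denominator of $f$ is positive, $G>f^2$ is equivalent to the radical inequality $G\big(\tfrac{\sigma_0}{\gamma_0}-(1-a)\big)^2>16\sigma_0^2$.

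The hard part will be this last inequality: it is genuinely tight, becoming nearly an equality at the corner $a=1$, $b=\tfrac34$ (there the two sides differ by only about $0.03$), so no lossy estimate can succeed. I would again reduce to $a=1$ by checking that the left-minus-right difference is monotone decreasing in $a$, then substitute $s=\sqrt b$ and clear the single remaining radical $\sqrt{1-\tfrac34 s^2}$ by isolating it and squaring, producing a polynomial inequality in $s$ on $[\tfrac{\sqrt3}2,1]$ which can be shown strictly positive with its minimum at $s=\tfrac{\sqrt3}2$. This corner tightness, rather than any conceptual difficulty, is the crux of the claim, and it is precisely what prevents the algebraic arguments here from extending the admissible range of speeds in \eqref{speeds}--\eqref{speeds2} any further.
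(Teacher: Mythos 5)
Your reduction to the triangle $\frac34\le b<a\le 1$ (with $a=c_{j_0}$, $b=c_{j_0+1}$) via the closed-form identity $\frac{\sigma_0}{\gamma_0}=\frac12 b+\sqrt{b\,(a-\frac34 b)}$ is correct, and your arguments for \eqref{clbm1}, for \eqref{clbm0}, and for the part $G>\frac{16}{25}$ of \eqref{clbm} all go through; this is already a different route from the paper, which works with the bounds $1-\sqrt c\le\sqrt{1-\frac34 c}-\frac12\sqrt c\le 1-c$ and a case analysis on $j_0$. The genuine gap is in the one step that carries all the tightness, namely $G>f^2$. You propose to clear the denominator and then ``reduce to $a=1$ by checking that the left-minus-right difference is monotone decreasing in $a$'', i.e.\ that $D'(a,b):=G\,\bigl(\frac{\sigma_0}{\gamma_0}-(1-a)\bigr)^2-16\sigma_0^2$ is decreasing in $a$. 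This claim is false. Writing $D'=(G-f^2)\bigl(\frac{\sigma_0}{\gamma_0}-(1-a)\bigr)^2$, the squared factor is positive and \emph{increasing} in $a$, so the monotonicity of $D'$ depends on $b$: at $b=\frac34$ it is indeed decreasing in $a$, but at $b=0.9$ one computes $D'(0.91,0.9)\approx 1.19$, $D'(0.95,0.9)\approx 1.30$, $D'(1,0.9)\approx 1.34$, so there $D'$ is increasing in $a$ and your reduction to $a=1$ breaks down exactly at the step where, as you yourself note, no slack is available.

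The repair is to keep the quotient form and prove monotonicity of $G-f^2$ itself. You already have $\partial_a G=\frac{\sqrt b}{2\sqrt{a-\frac34 b}}+1-5\sqrt b\le 2-5\sqrt b<0$, and a direct computation gives
\[
\partial_a f=\frac{4\sqrt b}{\bigl(\frac{\sigma_0}{\gamma_0}-(1-a)\bigr)^2}\left(\frac32 b-1+\sqrt b\,\frac{a-\frac12 b}{2\sqrt{a-\frac34 b}}\right)>0\qquad\text{for } b\ge\tfrac34,
\]
so $f\ge 0$ is increasing in $a$ while $G$ is decreasing; hence $G-f^2$ is decreasing in $a$ and is minimized at $a=1$. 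There $f=4\gamma_0$, $\frac{\sigma_0}{\gamma_0}=1-\gamma_0^2$, $\sigma_0=\gamma_0(1-\gamma_0^2)$, and the inequality becomes $2-5\gamma_0-17\gamma_0^2+5\gamma_0^3>0$ on $\gamma_0\in\bigl(0,\frac{\sqrt7-\sqrt3}{4}\bigr]$, which is precisely the paper's \eqref{clbmee}; this cubic is decreasing, with value $\approx 0.03$ at the right endpoint, confirming your corner-tightness prediction. With this fix your proof is complete and is genuinely different from the paper's --- indeed slightly stronger: the paper's lossy estimate \eqref{fol}, obtained by decoupling $c_{j_0}$ from $c_{j_0+1}/c_{j_0}$, fails for part of the allowed parameter range, which is why the paper must invoke the minimality of $j_0$ in \eqref{defjz} when $j_0\ge 2$ to force $\gamma_0/\sqrt{c_{j_0}}\le\frac3{20}$; your joint treatment of $(a,b)$ on the triangle never needs that extra information.
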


Assume Claim \ref{cl:bm}. 
Let 
$$
\sigma_1 = \frac {16}{25} < c_N^- \quad \hbox{by Lemma \ref{sp}},
$$
$$\sigma_2 =     \left( \max \left(\frac 45, \frac { 4\bar \sigma_0}{\frac {\sigma_0}{\gamma_0} -(1-c_{j_0})}\right)\right)^2 <1,\quad 
\sigma_1\leq \sigma_2<\sigma_3<1, \quad \bar \sigma_0 > \sigma_0,
$$
where by \eqref{clbm} and by continuity, we fix $\bar \sigma_0> \sigma_0$ close enough to $\sigma_0$ and $\sigma_3> \sigma_2$
close enough to $\sigma_2$ so that
\be\label{fff}
   \frac { \sigma_0}{\gamma_0} + c_{j_0}  -\sigma_3 > 5 \bar  \sigma_0.
\ee

Applying \eqref{infty2vrai} with $x=x_0(t) -t$ where $x_0(t)= \left( \frac {\sigma_0}{\gamma_0} + c_{j_0}\right) t -K_0$, we obtain using \eqref{fff},
\begin{align*}
|u(t,x_0(t))  | & \leq  
 C   e^{-\frac 25 ( x_0(t) -\sigma_3  t) }+  C e^{- \frac 12 \sqrt{\sigma_2} ( x_0(t)-t) } \\
 &  \leq C e^{-\frac 25 \left[(   \frac { \sigma_0}{\gamma_0} + c_{j_0}  -\sigma_3 )t-K_0\right]} + Ce^{- \frac 12 \sqrt{\sigma_2} \left[( \frac {\sigma_0}{\gamma_0} + c_{j_0} -1)t -K_0\right] } \\
 &  \leq C(K_0)  e^{-2 \bar \sigma_0 t} \leq e^{-2 \sigma_0 t},
\end{align*}
for all $t>t_0(K_0)$, provided $t_0(K_0)$ is large enough.

\begin{proof}[Proof of Claim \ref{cl:bm}]
First, by explicit computations, we see that  
\be\label{bb}
0\leq c\leq 1 \quad \Rightarrow \quad 
\sqrt{1-\frac 34 c}  \geq 1- \frac {\sqrt{c}}2,
\ee
so that $\gamma(c) : = \sqrt{1-\frac 34 c}   - \frac {\sqrt{c}}2  \geq 1-\sqrt{c}$.
Moreover,
$$
\gamma(c) = \frac {1-c}{\sqrt{1-\frac 34 c} + \frac {\sqrt{c}}{2}}\leq 1-c,
$$
thus
\be\label{gc}
\forall c\in [0,1],\quad 
1-\sqrt{c} \leq  \sqrt{1-\frac 34 c}   - \frac {\sqrt{c}}2  \leq 1-c.
\ee

Since $\gamma_0 = \sqrt{c_{j_0}} \gamma(c_{j_0+1}/c_{j_0})$, we obtain
$$
\frac {\sigma_0}{\gamma_0} \geq  \frac {\sqrt{c_{j_0+1}} (c_{j_0}-c_{j_0+1})}{\sqrt{c_{j_0}}  (1-c_{j_0+1}/c_{j_0})}
= \sqrt{c_{j_0+1} c_{j_0}}.
$$

Next, we prove \eqref{clbm0}. Observe that 
\be\label{gsz}
\sigma_0 = c_{j_0} \gamma_0 - \gamma_0^3,
\ee
and, since $\frac 34 < c_j \leq 1$ for all $j$,
\be\label{bsz}
0<\gamma_0 < \sqrt{c_{j_0}} \left( \sqrt{1-\frac 9{16}} - \frac {\sqrt{3}} 4\right) = \frac {\sqrt{c_{j_0}}}{4} (\sqrt{7}-\sqrt{3}).
\ee
Since $1-c_{j_0+1} = 1-c_{j_0} + c_{j_0} - c_{j_0+1} \leq \frac 14$, we  obtain
\begin{align*}
4 \sigma_0  & = 4 \sqrt{c_{j_0+1}} (c_{j_0} - c_{j_0+1}) \leq 4 \sqrt{c_{j_0+1}} \left(\frac 14 - (1-c_{j_0} )\right) \\
& \leq \sqrt{c_{j_0+1}} - 4 \sqrt{c_{j_0+1}} (1-c_{j_0}) \leq c_{j_0}Ê\sqrt{c_{j_0+1}}  - 3 \sqrt{c_{j_0+1}} (1-c_{j_0})\\
& \leq c_{j_0}Ê\sqrt{c_{j_0+1}}  - 3 \frac {\sqrt{3}}2   (1-c_{j_0}) \leq  \sqrt{c_{j_0+1}} \sqrt{c_{j_0}}   -   (1-c_{j_0}) \leq \frac {\sigma_0}{\gamma_0} - (1-c_{j_0}).
\end{align*}

Finally, we prove \eqref{clbm}. We begin with the case where $\frac {\sigma_0}{\frac {\sigma_0}{\gamma_0} -(1- c_{j_0})}< \frac 15$.
Then, necessarily $\gamma_0 < \frac 15$. It is then clear that, using \eqref{gsz},
$$  \frac {\sigma_0}{\gamma_0} +c_{j_0} - \frac {16}{25} - 5 \sigma_0
  =   {c_{j_0}} - \frac {16}{25} + (c_{j_0} - \gamma_0^2) (1-5 \gamma_0)\geq 0.
$$

Second, we assume $\frac {\sigma_0}{\frac {\sigma_0}{\gamma_0} -(1- c_{j_0})}\geq  \frac 15$. 
We distinguish two cases depending on the value of $j_0$.

If  $j_0=1$ then $c_{j_0}=1$ and we are reduced to prove 
\be\label{clbmee}
\frac  {  \sigma_0}{\gamma_0}  + 1  -16 \gamma_0^2   >  5  \sigma_0  
\ee
where $\frac {\sigma_0}{\gamma_0}=1-\gamma_0^2$, so that it is sufficient to have
$$
0< \gamma_0 \leq \frac {\sqrt{7}-\sqrt{3}}{4} \quad \Rightarrow \quad 
2- 5 \gamma_0 - 17 \gamma_0^2 +5 \gamma_0^3 >0 ,$$
which is easily checked by explicit computations.

We now consider the case where $j_0\geq 2$ and thus $N\geq 3$.
Let
$$
\tilde \sigma_0 = \frac {\sigma_0}{c_{j_0}^{\frac 32}},\quad
\tilde \gamma_0 = \frac {\gamma_0}{c_{j_0}^{\frac 12}},\quad
\frac {\tilde \sigma_0}{\tilde \gamma_0}=1-\tilde \gamma_0^2
$$
Then, \eqref{clbm} is equivalent to
\be\label{clbm3}
\frac { \tilde \sigma_0}{\tilde \gamma_0} + 1 -\left( \frac { 4  \tilde \sigma_0}{\frac {\tilde \sigma_0}{\tilde \gamma_0} -(\frac 1{c_{j_0}} -1 )}\right)^2   >  5 \sqrt{c_{j_0}} \tilde \sigma_0.
\ee
It is clear that the following inequality
\be\label{fol}
\frac { \tilde \sigma_0}{\tilde \gamma_0} + 1 -\left( \frac { 4  \tilde \sigma_0}{\frac {\tilde \sigma_0}{\tilde \gamma_0} -\frac 13 }\right)^2   >  5   \tilde \sigma_0.
\ee
would imply \eqref{clbm3}.
By explicit computations, one can see that \eqref{fol} is not satisfied for all $\tilde \gamma_0 \in (0,\frac {\sqrt{7}-\sqrt{3}}{4}]$. At this point, we need to use the definition of $j_0$ and the fact that 
$j_0\geq 2$ to lower the value of $\tilde \gamma_0$ for which we have to check \eqref{fol}.

Indeed, by the definition of $j_0$, we have
$$
\frac{\sqrt{3}}2 (c_{j_0}-c_{j_0+1})\leq 
\sqrt{c_{j_0+1}} (c_{j_0}-c_{j_0+1}) \leq \sqrt{c_2} (1-c_2)\leq (1-c_2).
$$
Since $1-c_2 + c_{j_0} - c_{j_0+1} \leq \frac 14$, we obtain
$
c_{j_0}-c_{j_0+1} \leq \frac 1{2 (2+\sqrt{3})}$ and so by simple computations,
$$
\frac {c_{j_0+1}}{c_{j_0}} \geq \frac 1{1+\frac {2}{3(2+\sqrt{3})}} \quad \hbox{which implies} \quad
\tilde \gamma_0 \leq \frac 3{20}.
$$
Moreover, we easily check  that \eqref{fol} is indeed satisfied for all $\tilde \gamma_0\in [0,\frac 3{20}]$.
\end{proof}

\section{Construction and lower bound of the approximate solution}\label{sec:new4}
This section contains the main ingredient of the proof, i.e. the construction of an approximate solution for $t\gg 1$,   and the description of its asymptotics for $x\gg 1$.
The approximate solution is built by explicit resolution of the  main contribution of the   interactions of the solitons for $t\to +\infty$, i.e. where solitons are decoupled. Note that such refined computations were not needed for existence result.

\medskip

Recall the following notation.
Let  $j_0\in \{1,\ldots,N-1\}$ be such that
\be\label{defjz4}
\sqrt{c_{j_0+1}}(c_{j_0}-c_{j_0+1}) = \min_j \sqrt{c_{j+1}}(c_j-c_{j+1}):=\sigma_0
\ee
and
\be\label{defgamma4}
\gamma_0=  \sqrt{ {c_{j_0}} - \frac 34   {c_{j_0+1}}} - \frac 12 \sqrt{  {c_{j_0+1}} }, \quad  
x_0(t)=\left(\frac {\sigma_0}{\gamma_0}   + c_{j_0}\right) t - K_0,
\ee
where $K_0>0$.

\begin{proposition}[Approximate solution]\label{pr:2}
Assume \eqref{speeds}. 
There exists a function $V(t,x)$ such that
\begin{enumerate}[\rm 1.]
\item  There exists $C>0$ such that, for all  $t\geq 0$,
\be\label{eq:bar}
\bigg\|V(t) - \sum_{j=1}^N Q_{c_j}(x-c_jt-\Delta_j)\bigg\|_{H^3}\leq Ce^{- \sigma_0 t}.
\ee
\item For all  $t\geq 0$,
\be\label{eq:eqV}
\left\| \partial_t V + \partial_x(\partial_x^2 V + V^4)   \right\|_{H^3} \leq C e^{-2\sigma_0 t}.
\ee
\item There exist $\kappa>0$ and $t_1=t_1(K_0)>0$, such that, for all  $t\geq t_1$, 
\be\label{eq:lowV}
 |V(t,x_0(t) )  |\geq \kappa  e^{\gamma_0 K_0} e^{-2 \sigma_0 t}.
\ee
\end{enumerate}
\end{proposition}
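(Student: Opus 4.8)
The plan is to construct $V$ explicitly by solving, order by order in the soliton interactions, the equation satisfied by the error, then extract the asymptotics of the resulting correction terms. I will work in the $N=2$ case first (the general case being analogous, with $j_0$ selecting the dominant interacting pair) and write $V = R_1 + R_2 + Z$, where $R_j(t,x) = Q_{c_j}(x - c_j t - \Delta_j)$ are the exact solitons. Substituting into \eqref{kdv} and using that each $R_j$ solves the equation, the leading obstruction is the cross term $-4(R_1^3 R_2)_x$ coming from expanding $(R_1+R_2+Z)^4$. The idea is to define $Z$ as the explicit solution of the linearized equation driven by the asymptotic form of this interaction term, namely
\be\label{planode}
Z_t + \left(Z_{xx} + 4 R_1^3 Z\right)_x = -4 c_0 \left(e^{-\sqrt{c_2}(1-c_2)t} e^{-\sqrt{c_2}(x - t - \Delta_1)} R_1^3\right)_x,
\ee
for which the self-similar ansatz $Z(t,x) = 4 c_0 e^{-\sqrt{c_2}(1-c_2)t} A(x - t - \Delta_1)$ reduces matters to the profile ODE \eqref{ode} for $A$. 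Solving this ODE and controlling $A$ gives a function whose size I must then track pointwise.

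The three conclusions of the proposition correspond to three separate tasks. For \eqref{eq:bar}, the point is that $Z$ carries the prefactor $e^{-\sqrt{c_2}(1-c_2)t}$; since $\sqrt{c_2}(1-c_2) \geq \sigma_0$ under the speed constraints (this is exactly where the definition \eqref{defjz} of $\sigma_0$ as a minimum over pairs is used, together with the algebraic inequalities in Claim \ref{cl:bm}), the correction $Z$ is $O(e^{-\sigma_0 t})$ in $H^3$, and I must verify the $H^3$ bound by exploiting the exponential spatial localization of $A$ together with the fixed profile $Q$ in $R_1$. For \eqref{eq:eqV}, I compute $\partial_t V + \partial_x(\partial_x^2 V + V^4)$ and check that, once the chosen $Z$ cancels the leading interaction, every remaining term is at least quadratic in the interactions, hence $O(e^{-2\sigma_0 t})$ in $H^3$; the errors are of three types — the difference between $R_1^3 R_2$ and its exponential asymptotics, the higher-order terms $R_1^2 R_2^2$, $R_1 R_2^3$ and all terms involving $Z$ itself, and the error from replacing $R_2$ by a pure exponential — and each is estimated using the Gaussian-type decay of products of well-separated solitons for $t \gg 1$. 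For \eqref{eq:lowV}, the lower bound is driven entirely by the asymptotic property \eqref{generic}: since $A(x) \sim a\, e^{-\gamma_0 x}$ with $a \neq 0$ as $x \to +\infty$, evaluating $Z$ at $x = x_0(t)$ and using $x_0(t) - t - \Delta_1 = (\tfrac{\sigma_0}{\gamma_0} + c_{j_0} - 1)t - K_0 - \Delta_1$ produces precisely the factor $e^{\gamma_0 K_0} e^{-2\sigma_0 t}$, after checking that $Z$ dominates $R_1 + R_2$ at this point (which holds because $\gamma_0 < \sqrt{c_2}$ in the regime $\tfrac34 < c_2 < 1$, so $Z$ decays more slowly in $x$ than $R_2$).

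The main obstacle is unquestionably establishing the generic decay \eqref{generic}, i.e. that the solution $A$ of \eqref{ode} satisfies $A(x) \sim a\, e^{-\gamma_0 x}$ with a \emph{nonvanishing} constant $a$. The linear ODE \eqref{ode} has a one-dimensional family of solutions decaying at $+\infty$, but its characteristic roots at infinity (where $Q^3 \to 0$) degenerate, and the slowest admissible decay rate is exactly $\gamma_0 = \sqrt{1 - \tfrac34 c_2} - \tfrac12\sqrt{c_2}$; the genuine difficulty is ruling out the \emph{accidental cancellation} $a = 0$, in which case $A$ would decay like $R_2$ and no contradiction would follow. Standard ODE asymptotics identify the possible rates but cannot decide whether the coefficient of the slow mode vanishes, because this depends on the global behavior of $A$ on all of $\RR$, where the potential $4Q^3$ is large. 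My plan here is to follow the strategy signalled in the text: use a Virial-type functional adapted to the operator $L = -\partial_x^2 + 1 - 4Q^3$, in the spirit of \cite{MM1}, \cite{yvanSIAM}, \cite{MMas1}, to obtain a coercive lower bound forcing $a \neq 0$, while the restriction $c_2 \in [c_0, 1]$ with $c_0$ close to $\tfrac34$ reflects the range where this Virial computation closes. The remaining steps — the $H^3$ estimates and the error control in \eqref{eq:eqV} — are technical but routine once the localization of $A$ is in hand; the nonvanishing of $a$ is the crux.
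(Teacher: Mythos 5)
Your overall architecture (profile ODE \eqref{ode}, generic decay $a\neq 0$ via Virial arguments, evaluation at $x_0(t)$) matches the paper's, and you correctly identify the nonvanishing of $a$ as the crux. But your construction of $V$ has a genuine gap that makes conclusion 2, estimate \eqref{eq:eqV}, fail: you correct only the term $4R_1^3R_2$ (the tail of the small soliton acting on the large one) and explicitly classify $4R_1R_2^3$ as an error. It is not one. Writing $y_j=c_jt+\Delta_j$ and using $R_1\approx (10)^{1/3}e^{\sqrt{c_1}(x-y_1)}$ near $x\approx y_2$, one gets $\|R_1R_2^3\|_{H^3}\sim e^{-\sqrt{c_1}(c_1-c_2)t}=e^{-(1-c_2)t}$ (with $c_1=1$), whereas $2\sigma_0=2\sqrt{c_2}(1-c_2)$. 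Since $2\sqrt{c_2}>\sqrt{3}>1$ in the regime $\frac34<c_2<1$, this term is much larger than $e^{-2\sigma_0 t}$, so it must be cancelled, not absorbed. The paper does this with a second pairwise profile $Z_{2,1}=z_{2,1}(t)A_{2,1}(x-y_2)$, built from a \emph{different} ODE (Lemma \ref{le:prbis}, with exponentially growing forcing $e^{\sqrt{c}x}Q^3$), symmetric to your $Z=Z_{1,2}$.

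Moreover, once $Z_{2,1}$ is added, the new cross term $4R_1^3Z_{2,1}$ appears, of size $e^{-(1+\gamma_{2,1}^{\rm I})(1-c_2)t}$ where $\gamma_{2,1}^{\rm I}=\frac12-\sqrt{c_2-\frac34}$; since $1+\gamma_{2,1}^{\rm I}<2\sqrt{c_2}$ throughout $(\frac34,1)$, this is again larger than $e^{-2\sigma_0 t}$. This is why the paper needs a second-generation correction $W$ (the functions $Z_{j,k,l}^{\rm I,II}$ of Lemma \ref{le:prtri}), so that the final ansatz is $V=R+Z+W$; your proposal stops one order too early at both stages, even for $N=2$. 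Relatedly, your reduction of the general case to ``the dominant pair $j_0$'' is not correct: for $N\geq 3$ every ordered pair $(j,k)$ produces an interaction $z_{j,k}\sim e^{-\sqrt{c_k}|c_j-c_k|t}$ that can exceed $e^{-2\sigma_0 t}$, so the paper corrects all pairs and all triples $(j,k,l)$ with $l<j$; and the lower bound \eqref{eq:lowV} then requires the sign coherence \eqref{IAjk} (all $a_{j,k}^{\rm I}$, $j<k$, have the same sign, proved by continuity of $c\mapsto a_c^{\rm I}$), so that the slow tails add rather than cancel at $x_0(t)$ — a point absent from your plan.
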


This section is devoted to the proof of Proposition \ref{pr:2}.

\subsection{ODE analysis}
For $c>0$, let  
\be
\label{deflplus}
L_c f=-f''+c f-4 Q_c^3 f,\quad L=L_1,
\ee 
$$
\Lambda Q_c = \frac 1 c \left(\frac 13 Q_c + \frac 12 x Q_c'\right) = \frac d{d{c'}} {Q_{c'}}_{|{c'=c}},\quad
\Lambda Q= \Lambda Q_1, \quad  L \Lambda Q = - Q.
$$
Denote the $L^2$ scalar product by $(f,g)=\int f(x)g(x)dx.$

\medskip

Following the outline of the proof in the Introduction, we define the solution of equation \eqref{ode} related to the main perturbative terms in the construction of the approximate solution.

\begin{lemma}\label{pr:hypo2} Let $\frac 34 \leq c <1$. 
There exists a unique solution $A_c\in H^{10}$ of
\be\label{eq:AAA}
(L A_c)'  + \sqrt{c}(1-c) A_c   =G_c' \quad \hbox{where }G_c(x)=e^{-\sqrt{c} x} Q^3(x).
\ee
Moreover, $A_c$ satisfies the following, for all $k\geq 0$,
\begin{itemize}
\item[{\rm (i)}] {\rm Decay estimates.}
\begin{equation}\label{Aright}
|A_c^{(k)}(x)|\lesssim  e^{\gamma_c^0 x} \quad \hbox{for $x<0$,}\qquad 
|A_c^{(k)}(x)|\lesssim e^{-\gamma_c^{\rm I} x} \quad \hbox{for $x>0$,}
\end{equation}
where  
$$
\gamma_c^0 = \sqrt{ 1 - \frac 34 c}+ \frac 12\sqrt{c} ,
\quad
\gamma_c^{\rm I} =   \sqrt{ 1 - \frac 34 c} - \frac 12\sqrt{c},
\quad
\gamma_c^{\rm II} = \sqrt{c},
$$
$$
0<\gamma_c^{\rm I}< \gamma_c^{\rm II} = \sqrt{c} < \gamma_c^0 = \gamma_c^{\rm I} + \gamma_c^{\rm II}.
$$
\item[{\rm (ii)}] {\rm Asymptotics at $+\infty$.} There exist $a_c^{\rm I,II}\in \RR$ such that, for all $k\geq 0$,
$$\left|(d/dx)^k\left(A_c(x) - a_c^{\rm I} e^{-\gamma_c^{\rm I} x}-  a_c^{\rm II} e^{- \gamma_c^{\rm II} x}\right)\right|\lesssim  e^{-\frac 32 |x|} \quad \hbox{for $x>0$.}$$
\item[{\rm (iii)}] {\rm Generic exponential decay on the right-hand side.}
\begin{equation}\label{eq:generic}
 a_c^{\rm I} =\lim_{x \to +\infty}A_c(x) e^{\gamma_c^{\rm I} x}\neq 0.
\end{equation}
\item[{\rm (iv)}] {\rm Continuity of $a_c^{\rm I}$.} 
The function $c\in [\frac 34,1) \mapsto a_c^{\rm I}$ is continuous.
In particular, $a_c^{\rm I}$ has a constant sign on $[\frac 34,1)$.
\end{itemize}
\end{lemma}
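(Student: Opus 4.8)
The plan is to treat \eqref{eq:AAA} as a third-order linear ODE with exponentially localized coefficients and to analyze it through the exponential dichotomy of its constant-coefficient limit. Expanding $(L A_c)' = -A_c''' + A_c' - 4Q^3 A_c' - 12 Q^2 Q' A_c$, the equation reads $-A_c''' + A_c' + \sqrt{c}(1-c) A_c = G_c' + 4Q^3 A_c' + 12 Q^2 Q' A_c$, where the $Q$-coefficients decay like $e^{-3|x|}$ and the source $G_c'$ decays exponentially at both ends (with rate $3-\sqrt c$ at $-\infty$). The limiting homogeneous operator $-\partial_x^3 + \partial_x + \sqrt c(1-c)$ has characteristic polynomial $\mu^3 - \mu - \sqrt c(1-c)$, which I would factor as $(\mu + \sqrt c)(\mu^2 - \sqrt c\,\mu + (c-1))$. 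Its roots are $-\gamma_c^{\rm II} = -\sqrt c$, $-\gamma_c^{\rm I}$ and $+\gamma_c^0$, with $\gamma_c^{\rm I} = \sqrt{1-\tfrac34 c} - \tfrac12\sqrt c$ and $\gamma_c^0 = \sqrt{1-\tfrac34 c} + \tfrac12\sqrt c$, giving $0 < \gamma_c^{\rm I} < \gamma_c^{\rm II} < \gamma_c^0 = \gamma_c^{\rm I}+\gamma_c^{\rm II}$ for $\tfrac34 \le c < 1$. Since the coefficients tend to the same constants at both ends, solutions decaying at $+\infty$ form a two-dimensional space (modes $e^{-\gamma_c^{\rm I}x}$, $e^{-\gamma_c^{\rm II}x}$), while those decaying at $-\infty$ form a one-dimensional space (mode $e^{\gamma_c^0 x}$).

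Next I would establish existence, uniqueness and the decay bounds \eqref{Aright}. By variation of constants against this dichotomy (Levinson/stable-manifold theory), the solutions of \eqref{eq:AAA} decaying at $+\infty$ form a two-parameter affine family, and imposing decay at $-\infty$ amounts to the vanishing of the two growing components as $x\to-\infty$; existence and uniqueness of $A_c\in H^{10}$ are thus equivalent to the transversality of the stable subspaces at $\pm\infty$, i.e.\ to the absence of a nontrivial homogeneous solution decaying at both ends. I would verify this non-degeneracy for $c<1$ from an energy identity: pairing the homogeneous equation with $A$ and integrating by parts gives $\sqrt c(1-c)\int A^2 = 6\int Q^2 Q' A^2$, which, combined with a weighted companion identity, rules out a nonzero localized solution; note that this degenerates exactly at $c=1$, where $A=Q'$ (with $LQ'=0$) is a decaying kernel element, which is why the range is half-open. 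The decay estimates and the two-term asymptotics (ii) then read off from the dichotomy: at $+\infty$ the slowest surviving mode is $e^{-\gamma_c^{\rm I}x}$, subleading $e^{-\gamma_c^{\rm II}x}$, with the remainder controlled at the next exponential scale produced by $Q^3\sim e^{-3x}$ and the fast source $G_c'$, yielding the $e^{-\frac32|x|}$ bound; at $-\infty$ only $e^{\gamma_c^0 x}$ survives.

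The crux, and the step I expect to be hardest, is the generic property (iii), that the connection coefficient $a_c^{\rm I} = \lim_{x\to+\infty} A_c(x) e^{\gamma_c^{\rm I}x}$ does not vanish — precisely where the remark before the lemma warns that standard ODE techniques do not suffice. I would argue by contradiction: if $a_c^{\rm I}=0$, then $A_c$ decays like $e^{-\sqrt c x}$ at $+\infty$ and like $e^{\gamma_c^0 x}$ at $-\infty$, so $A_c$ and its derivatives lie in strongly weighted $L^2$ spaces. I would then express $a_c^{\rm I}$ as a Wronskian-type pairing of $G_c'$ against a solution of the adjoint homogeneous equation growing like $e^{\gamma_c^{\rm I}x}$, and use Virial-type multipliers (testing the equation against $A_c$, against an $x$- or $e^{\theta x}$-weighted $A_c$, and against the explicit functions $Q'$ and $\Lambda Q$, with $LQ'=0$ and $L\Lambda Q = -Q$) to convert the assumption $a_c^{\rm I}=0$ into an over-determined system of integral identities incompatible with the nonzero source $G_c = e^{-\sqrt c x}Q^3$. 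The delicate point is that the indefinite term $\int Q^2 Q' A_c^2$ carries no fixed sign, so the Virial computation must be arranged, as in \cite{MM1}, \cite{yvanSIAM}, \cite{MMas1}, so that the contribution of $G_c'$ dominates and produces a strict inequality.

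Finally, for (iv) I would invoke continuous dependence of $A_c$ on $c$ in the weighted spaces above: the coefficients of \eqref{eq:AAA} and the source $G_c$ depend smoothly on $c$, and the dichotomy is uniform on compact subsets of $[\tfrac34,1)$ because the rates stay separated, $\gamma_c^{\rm II}-\gamma_c^{\rm I}\to 1>0$ as $c\to 1$, even though $\gamma_c^{\rm I}\to 0$ (again explaining why $c=1$ is excluded). Continuity of $c\mapsto a_c^{\rm I}$ follows, and since (iii) gives $a_c^{\rm I}\neq 0$ throughout $[\tfrac34,1)$, the intermediate value theorem forces $a_c^{\rm I}$ to keep a constant sign.
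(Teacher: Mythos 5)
Your treatment of (i)--(ii) --- the factorization $\mu^3-\mu-\sqrt c\,(1-c)=(\mu+\sqrt c)(\mu^2-\sqrt c\,\mu-(1-c))$, the resulting rates $\gamma_c^0,\gamma_c^{\rm I},\gamma_c^{\rm II}$, and the dichotomy/Fredholm argument --- matches the paper (Claim \ref{PG}, Remark \ref{ccrit}). But already the non-degeneracy step has a soft spot: the identity $\sqrt c(1-c)\int A^2=6\int Q^2Q'A^2$, while correct, cannot by itself exclude a localized homogeneous solution, because $Q'$ changes sign, and your ``weighted companion identity'' is never specified. The paper's argument (after Pego--Weinstein) is sign-free: writing $\bar A=A+aQ'$ with $(\bar A,Q')=0$, one gets $(L\bar A)'+\theta\bar A=a\theta Q'$; pairing with $Q$ gives $(\bar A,Q)=0$, pairing with $L\bar A$ gives $(L\bar A,\bar A)=0$, and the coercivity \eqref{eq:we} of $L$ under these two orthogonality conditions forces $\bar A=0$, hence $A\equiv 0$. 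This gap is fillable, but as written your uniqueness step is incomplete. Part (iv) of your proposal is essentially the paper's argument and is fine.

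The genuine gap is (iii), the crux of the lemma, and your proposal stops exactly where the difficulty begins. You correctly observe that $\int Q^2Q'A_c^2$ is sign-indefinite, but ``arranging the Virial computation so that the contribution of $G_c'$ dominates'' is the whole problem, not a proof, and the multipliers you propose (testing the equation for $A_c$ against $A_c$, weighted $A_c$, $Q'$, $\Lambda Q$) yield identities with no coercive quantity. The paper's proof has three ingredients, none of which appears in your sketch: (a) a passage to a \emph{dual} variable $B=-LA$, after normalizing $A=A_c-a\Lambda Q-bQ'$ so that $\int AQ^{\frac52}=\int AQ'=0$ (using $LQ^{\frac52}=-\frac{21}4Q^{\frac52}$), then corrected to $B_0=B+G_0$ satisfying \eqref{eq:B0} with the orthogonality conditions \eqref{orthoB0}; (b) the multiplier $B_0Q'/Q^2$ applied to \eqref{eq:B0}, together with the nontrivial coercivity property of Claim \ref{le:B}, namely $\int L(B_0')B_0\frac{Q'}{Q^2}\geq\frac38\int\frac{B_0^2}{Q}+7\int B_0^2Q^2$, itself proved via the substitutions $E=B_0Q^{-1/2}$, $F=B_0Q$ and spectral facts for the operators $w''+\frac15\beta(2\beta+3)Q^3w$; and (c) a \emph{quantitative} closing of the contradiction: the constants $a_0$ and $\alpha$ are computed by pairing with $\Lambda Q$ and with $H_0$ (where $LH_0=1$), and the explicit quantity $k(c)$ of \eqref{def:kc}, which would have to satisfy $k(c)\geq1$ if $a_c^{\rm I}=0$, is checked numerically to satisfy $k(c)\leq 0.5$ on $[\frac34,1]$. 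This last point also explains why the range of $c$ is restricted: no soft ``over-determined system of integral identities'' argument is known, and the conclusion ultimately rests on explicit computation of constants. Without (a)--(c) there is no proof of (iii), hence no proof of the lemma.
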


\begin{remark}\label{ccrit}
Note   that $\gamma_c^0$, $-\gamma_c^{\rm I}$ and $-\gamma_c^{\rm II}$ are the  three roots of
$
\gamma^3 - \gamma -\sqrt{c} (1-c)=(\gamma+\sqrt{c})(\gamma^2 -\sqrt{c} \gamma -(1-c))=0.
$
Since $\frac 34 \leq  c<1$, we have 
\be\label{gammaZ} \sqrt{c} < \gamma_c^0 < 2\sqrt{c}, \quad 0<\gamma_c^{\rm I}<\sqrt{c}.\ee
It is a key point in our proof to be able to prove that  $A_c$ has generic decay, i.e.     the exponential decay $e^{-\gamma_c^{\rm I} x}$ for 
$x>0$.

For $0<c\leq \frac 13$,  the situation is different since $\gamma_c^{\rm I}\geq \sqrt{c}$. In particular,
$e^{-\sqrt{c} x}$ is the generic decay of $A_c$ in this case. 
The strategy of this paper cannot be applied directly.
\end{remark}

\begin{remark}\label{ttg}
By Claim \ref{cl:bm},
\be\label{futur}
1-\sqrt{c} \leq \gamma_c^{\rm I} \leq 1-c.
\ee
\end{remark}

\begin{proof}
{\bf Proof of (i)-(ii).}
First, note that for  $0<\theta<\frac 2{3\sqrt{3}}$, equation $\gamma^3-\gamma-\theta=0$ has three distinct real roots: $\gamma_\theta^0$, $-\gamma_\theta^{\rm I}$ and  $-\gamma_\theta^{\rm II}$, where
$$
0\leq \gamma_\theta^{\rm I}<\gamma_\theta^{\rm II} < 1 <\gamma_\theta^{0}=\gamma_\theta^{\rm I}+\gamma_\theta^{\rm II} \leq \frac 2{\sqrt{3}}.
$$
From  the spectral analysis of Pego and Weinstein \cite{PW} and standard ODE arguments, one has the following general result.
\begin{claim} \label{PG}\label{decayexpo} {\rm (a)  Existence.}
For all $\theta>0$, for all $F\in L^2$,  there exists a  unique solution $  A\in H^3$ of 
\be\label{eq:nh}
(L    A)'  + \theta   A   = F.
\ee
{\rm (b)  Decay.}
Let $F$ be a $C^\infty$ function such that
\be\label{decayH}
\forall k\geq 0,\forall x\in \RR, \quad |F^{(k)}(x)|\leq C_k e^{-\frac 32 |x|}.
\ee  Assume $0<\theta<\frac 2{3\sqrt{3}}$ and 
let $A$ be the    solution of \eqref{eq:nh}. Then, $A$ is $C^\infty$ and satisfies, for all $k\geq 0$,

\noindent{$\bullet$ \rm Decay estimates.}
\begin{equation}\label{Arightdd}
|A^{(k)}(x)|\lesssim  e^{\gamma_\theta^0 x} \quad \hbox{for $x<0$,}\qquad 
|A^{(k)}(x)|\lesssim e^{-\gamma_\theta^{\rm I} x} \quad \hbox{for $x>0$.}
\end{equation}
$\bullet$ {\rm Asymptotics at $+\infty$.} There exists $a_\theta^{\rm I,II}\in \RR$ such that
$$\left|(d/dx)^k\left(A(x) - a_\theta^{\rm I} e^{-\gamma_\theta^{\rm I} x}-  a_\theta^{\rm II} e^{- \gamma_\theta^{\rm II} x}\right)\right|\lesssim  e^{-\frac 32 |x|} \quad \hbox{for $x>0$.}$$
\end{claim}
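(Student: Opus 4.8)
The plan is to read \eqref{eq:nh} as a constant-coefficient third order operator perturbed by an exponentially localized potential, to reduce existence and uniqueness in $H^3$ (part (a)) to an injectivity statement via the Fredholm alternative, and to recognize that this injectivity is exactly the absence of a nonzero real eigenvalue of the linearized flow $\partial_x L$, which is where Pego--Weinstein enters. Part (b) is then a combination of elliptic bootstrap and asymptotic integration of the associated first order system.

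First I would expand the operator. Since $Lf=-f''+f-4Q^3f$, one has $\mathcal{L}A:=(LA)'+\theta A=\mathcal{L}_0 A+BA$, where $\mathcal{L}_0=-\partial_x^3+\partial_x+\theta$ and $BA=-12Q^2Q'A-4Q^3A'$. The symbol of $\mathcal{L}_0$ is $\theta+i(\xi^3+\xi)$, of modulus bounded below by $\theta>0$ and growing like $|\xi|^3$; hence $\mathcal{L}_0\colon H^3\to L^2$ is an isomorphism for every $\theta>0$, with $\mathcal{L}_0^{-1}\colon L^2\to H^3$ bounded. Because $Q$ and its derivatives are smooth and exponentially decaying, $B\colon H^3\to L^2$ is compact: $BA$ is the product of $A,A'\in H^2$ with a decaying weight, so one uses the local compactness of $H^2\hookrightarrow L^2$ together with the decay of the weight to control the tails. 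Writing $\mathcal{L}=\mathcal{L}_0(I+\mathcal{L}_0^{-1}B)$ with $\mathcal{L}_0^{-1}B$ compact on $H^3$, the Fredholm alternative reduces existence and uniqueness to the triviality of $\ker\mathcal{L}$ in $H^3$.

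This injectivity is the heart of (a) and the main obstacle. Suppose $A\in H^3$ solves $(LA)'+\theta A=0$, i.e. $(\partial_x L)A=-\theta A$. The coefficients of this ODE are constant up to exponentially small corrections, and the characteristic roots of the limiting equation $r^3-r-\theta=0$ all have nonzero real part; an $L^2$ solution must therefore lie in the decaying subspace at each end, hence decays exponentially, so $A$ is a genuine $L^2$ eigenfunction of $\partial_x L$ with the real nonzero eigenvalue $-\theta$. Since the quartic nonlinearity is $L^2$-subcritical, the soliton $Q$ is stable and the spectral analysis of Pego--Weinstein \cite{PW} shows that $\partial_x L$ carries no nonzero real eigenvalue (its point spectrum, apart from the symmetry-induced generalized kernel at $0$, lies on the imaginary axis). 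Thus $A=0$, $\mathcal{L}\colon H^3\to L^2$ is an isomorphism, and (a) follows for all $\theta>0$.

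For (b) I would assume $0<\theta<\tfrac{2}{3\sqrt3}$ and the decay \eqref{decayH}. Smoothness comes from a bootstrap: solving \eqref{eq:nh} for $A'''=A'+\theta A-12Q^2Q'A-4Q^3A'-F$, the right-hand side lies in $H^2$, so $A\in H^5$, and iterating gives $A\in H^\infty\subset C^\infty$. For the decay \eqref{Arightdd} and the asymptotics, recast \eqref{eq:nh} as a first order system $Y'=M(x)Y+\tilde F(x)$ with $Y=(A,A',A'')^{\top}$, where $M(x)\to M_\infty$ exponentially, $M_\infty$ has the real eigenvalues $\gamma_\theta^0>0$, $-\gamma_\theta^{\rm I}$, $-\gamma_\theta^{\rm II}$, and $\tilde F$ decays like $e^{-\frac32|x|}$. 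Standard asymptotic integration (Levinson's theorem, or the stable/unstable manifold picture) shows that the solution, already known to decay, is governed at $+\infty$ by the two decaying modes, giving $|A^{(k)}(x)|\lesssim e^{-\gamma_\theta^{\rm I}x}$ together with the two-term expansion $A(x)=a_\theta^{\rm I}e^{-\gamma_\theta^{\rm I}x}+a_\theta^{\rm II}e^{-\gamma_\theta^{\rm II}x}+O(e^{-\frac32 x})$, the remainder exponent $\tfrac32>\gamma_\theta^{\rm II}$ being dictated by the decay of $\tilde F$ and of the potential; and at $-\infty$ by the single decaying mode $e^{\gamma_\theta^0 x}$. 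Everything here is routine constant-coefficient Fourier analysis and asymptotic ODE integration, so the only genuinely nontrivial input remains the Pego--Weinstein spectral fact used for injectivity.
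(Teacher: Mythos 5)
Your proposal is correct, and it follows the same overall architecture as the paper's proof: a Fredholm reduction of existence and uniqueness in $H^3$ to injectivity, identification of injectivity with the absence of a nonzero real eigenvalue of $A \mapsto (LA)'$, and standard ODE/asymptotic-integration arguments for the decay and the two-mode expansion in part (b). The difference is in where the weight of the argument is placed. You detail the Fredholm step (the decomposition $\mathcal{L}=\mathcal{L}_0+B$ with $\mathcal{L}_0=-\partial_x^3+\partial_x+\theta$ invertible by symbol analysis and $\mathcal{L}_0^{-1}B$ compact), which the paper dispatches in one line as ``usual arguments''; conversely, you cite the spectral fact from \cite{PW} as a black box, whereas the paper reproduces its short proof for completeness, and that reproduction is the real content of the paper's argument: writing $A=\bar A-aQ'$ with $(\bar A,Q')=0$, the equation $(L\bar A)'+\theta\bar A=a\theta Q'$ tested against $Q$ gives $(\bar A,Q)=0$, tested against $L\bar A$ gives $(L\bar A,\bar A)=0$, and Weinstein's coercivity \eqref{eq:we} then forces $\bar A=0$, hence $A=-aQ'$ and finally $A=0$ since $\theta\neq 0$. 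One caution: your gloss on why the Pego--Weinstein fact holds (``the quartic soliton is stable, so the point spectrum lies on the imaginary axis'') is not the actual mechanism; orbital stability does not by itself exclude a real negative eigenvalue (one would have to pass through the symmetry $x\mapsto -x$ producing a growing mode and then a linear-to-nonlinear instability argument), and the direct proof is the elementary variational computation above, which works for every $\theta\neq 0$ with no dynamical input. So the citation is legitimate -- the fact is indeed proved in \cite{PW}, as the paper states -- but if you were asked to make the claim self-contained, the coercivity computation is the missing piece, and it is shorter than the justification you sketched.
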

\begin{proof}[Proof of Claim \ref{PG}] Proof of (a).
In \cite{PW}, it is proved  that no real nonzero eigenvalue of the operator
$(L    A)' $ exists. We reproduce the proof here for the sake of completeness.
Recall the following basic facts on $L$ (see \cite{We})
\begin{itemize}
\item  ${\rm Ker} \, L = \{\lambda Q', \lambda \in \mathbb{R}\}$;
\item  There exists $\mu>0$ such that for any $f\in H^1$,
\begin{equation}\label{eq:we}
	(f,Q)=(f,Q')=0 \quad \Rightarrow \quad  (Lf,f)=\int (f')^2+f^2 - 5Q^4 f^2 \geq \mu \|f\|_{H^1}^2.
\end{equation}
\end{itemize}
Let $A$ be a solution of $(LA)'+\theta A=0$. Let $\bar A = A + a Q'$, where $a$ is such that $\int \bar A Q'=0$.
Then, since $LQ'=0$,
\be\label{eq:A}(L\bar A)' + \theta \bar A= a \theta Q'.\ee
Taking the scalar product of \eqref{eq:A} by $Q$, we obtain $(\bar A,Q)=0$. Moreover, taking the scalar product of \eqref{eq:A}
by $L\bar A$, we get $(L\bar A,\bar A)=0$. By \eqref{eq:we}, it follows that $\bar A=0$, and so $A=- aQ'$. But then
$\theta A=0$, so that $\theta =0$ or $A\equiv 0$. 
\smallskip

Once we know that no real nonzero eigenvalue exists, the invertibility of $(LA)'+\theta A$ follows from usual arguments (Fredholm alternative).
 \medskip
 
 Proof of (b). These properties follow from standard ODE arguments.
\end{proof}

Apply Claim \ref{PG} to $F=G_c'$ and $\theta=\sqrt{c}(1-c)$. There exists a unique $  A_c\in H^3$ such that 
$(L  A_c)' +\sqrt{c}(1-c)    A_c=G_c'$.
Note that $\sqrt{c}(1-c) < \frac 2{3\sqrt{3}}$, and thus (i) and (ii) are direct consequences of Claim \ref{decayexpo} and standard regularity arguments.

  \medskip

 {\bf Proof of (iii).}  This point is more delicate. Let $\frac 34<c<1$. Assume for the sake of the contradiction that 
$a_c^{\rm I}=0$, so that by (ii)
\be\label{decay}
\forall x>0, \quad |A_c^{(k)}(x)|\lesssim e^{- \sqrt{c} |x|} \lesssim e^{-\sqrt{\frac 34} |x|}.
\ee

\emph{Step 1.} Reduction to a dual equation. 
Let $  a$, $  b$  be such that $  A = A_c -   a \Lambda Q -   b Q'$ satisfies 
\begin{equation}\label{orthoA}
\int   A \, Q^{\frac 52} = 0, \quad  \int   A Q'=0.
\end{equation}
Note that $L Q^{\frac 52} = - \frac {21}{4} Q^{\frac 52}$ so that $  a$ exists since $( Q^{\frac 52},\Lambda Q)=-\frac 4{21}(LQ^{\frac 52}, \Lambda Q )=\frac 4{21} (Q^{\frac 52} ,Q)  \neq 0$.

Then by $L \Lambda Q = -Q$, 
$ A$ satisfies 
$$(L   A)' +\sqrt{c}(1-c)    A 
+ \sqrt{c}(1-c)   a \Lambda Q
- (a -\sqrt{c} (1-c) b) Q'=G_c'.$$
Set $a_0 =   \sqrt{c}(1-c) a$, $B= - L  A$. Then,
\be\label{eq:B}
L(B') +\sqrt{c}(1-c) B + a_0 Q = - L (G_c'),\quad \int B Q^{\frac 52}= 
\int B Q'=0.
\ee
We decompose 
$$G_c =  e^{-\sqrt{c} x} Q^3 =  G_0 +\alpha Q' + \beta Q,$$
 where 
$$
\alpha = \frac {\int G_c Q'}{\int (Q')^2}\neq 0 ,\quad \beta = \frac {\int G_c Q^{\frac 52}}{\int Q^{\frac 72}},\quad
\int G_0 Q^{\frac 52}=\int G_0Q'=0.
$$
Now, we set $  B_0=B+G_0$. We have thus proved 
\begin{claim}\label{claimB0}
Assuming \eqref{decay}, there exists a   smooth function $B_0\in H^3$ such that 
\be\label{eq:B0}
L(B_0') +\sqrt{c}(1-c) B_0 + a_0 Q = - \alpha L (Q'')+\sqrt{c}(1-c) G_0,\ee
\begin{equation}\label{orthoB0}
\int B_0 Q^{\frac 52} = \int B_0 Q'=0,
\end{equation}
\be\label{decayB0}
\forall k\geq 0, \ \forall x<0, \quad  |B_0^{(k)}(x)|\lesssim e^{-\gamma_c^0 x},\quad 
 \forall x>0,\ |B_0^{(k)}(x)|\lesssim e^{-\sqrt{\frac  34} x}.
\ee
\end{claim}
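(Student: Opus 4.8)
The plan is to prove Claim \ref{claimB0} by following the algebraic substitutions already set up in Step 1 and then extracting the decay estimates from the decay of $A_c$ under the contradiction hypothesis \eqref{decay}. First I would verify the existence and smoothness of $B_0$: since $A_c\in H^{10}$ by Lemma \ref{pr:hypo2} and $\Lambda Q$, $Q'$ are Schwartz functions, the correction $A = A_c - a\Lambda Q - b Q'$ is a smooth $H^3$ (indeed $H^{8}$) function satisfying the orthogonality conditions \eqref{orthoA} by construction of $a,b$. Then $B=-LA$ is smooth and $H^3$, and $B_0 = B + G_0$ inherits this regularity because $G_0 = G_c - \alpha Q' - \beta Q$ is a difference of Schwartz functions. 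The orthogonality \eqref{orthoB0} follows by taking scalar products of \eqref{eq:B} against $Q^{5/2}$ and $Q'$: using $LQ^{5/2}=-\tfrac{21}{4}Q^{5/2}$ and $LQ'=0$ together with the orthogonality built into $G_0$ and the definition of $a_0$, the relations $\int B_0 Q^{5/2}=\int B_0 Q'=0$ drop out after straightforward algebra.

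Next I would derive the equation \eqref{eq:B0}. Starting from \eqref{eq:B}, which reads $L(B') + \sqrt{c}(1-c)B + a_0 Q = -L(G_c')$, I substitute $B = B_0 - G_0$ and $G_c = G_0 + \alpha Q' + \beta Q$. The left-hand side becomes $L(B_0') - L(G_0') + \sqrt{c}(1-c)(B_0 - G_0) + a_0 Q$, while the right-hand side $-L(G_c') = -L(G_0'') - \alpha L(Q'') - \beta L(Q')$. Since $LQ'=0$, the $\beta$ term vanishes, and collecting the $L(G_0')$-type terms on both sides (noting the identity $-L(G_0')$ on the left cancels against $-L(G_0')$ arising from differentiating inside $-L(G_0'')$ appropriately) leaves precisely $L(B_0') + \sqrt{c}(1-c)B_0 + a_0 Q = -\alpha L(Q'') + \sqrt{c}(1-c)G_0$. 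This is bookkeeping with the linear operator $L$ and its kernel, so I would present the cancellations explicitly but briefly.

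The decay estimate \eqref{decayB0} is where the real content lies. For $x<0$, the bound $|B_0^{(k)}(x)|\lesssim e^{-\gamma_c^0 x}$ follows from the left-tail decay $|A_c^{(k)}(x)|\lesssim e^{\gamma_c^0 x}$ in \eqref{Aright}: since $B = -LA = A'' - A + 4Q^3 A$, and $\Lambda Q$, $Q'$, $G_0$ all decay like $e^{\sqrt{3/4}|x|}$ (faster than $e^{\gamma_c^0 x}$ since $\gamma_c^0 > \sqrt{c}>\sqrt{3/4}$ is false — here I must be careful), I would track which exponential dominates as $x\to-\infty$. For $x>0$, the crucial input is the \emph{contradiction hypothesis} \eqref{decay}: $a_c^{\rm I}=0$ forces $|A_c^{(k)}(x)|\lesssim e^{-\sqrt{c}|x|}\lesssim e^{-\sqrt{3/4}|x|}$, and since $B_0$ is built from $A_c$ via $L$ (a second-order differential operator with Schwartz-decaying potential $Q^3$) plus the Schwartz corrections, the right-tail of $B_0$ inherits at worst the $e^{-\sqrt{3/4}x}$ rate.

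The main obstacle I anticipate is the careful tracking of competing exponential rates in the decay estimate for $x<0$. The functions $\Lambda Q$ and $G_0=e^{-\sqrt{c}x}Q^3 - \alpha Q' - \beta Q$ have their own left-tail behavior, and one must confirm that after applying $L$ and combining with $A_c$, no faster-growing exponential than $e^{-\gamma_c^0 x}$ (i.e. growth as $x\to-\infty$) survives; the interplay between the rate $\gamma_c^0 = \gamma_c^{\rm I}+\gamma_c^{\rm II}$ and the rate $\sqrt{c}$ of the inhomogeneity $G_0$ must be resolved using \eqref{gammaZ}, namely $\sqrt{c}<\gamma_c^0<2\sqrt{c}$. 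The argument is not deep, but it requires organizing several decay rates consistently and is the kind of step where sign or rate errors easily creep in; everything else is routine substitution and use of the spectral identities for $L$.
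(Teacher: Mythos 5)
Your construction is necessarily the same as the paper's --- $A=A_c-a\Lambda Q-bQ'$ normalized by \eqref{orthoA}, $B=-LA$, $G_c=G_0+\alpha Q'+\beta Q$, $B_0=B+G_0$ --- and your derivation of \eqref{eq:B0} is correct up to typos (read $L(G_0')$ for $L(G_0'')$; the point is simply that $-L(G_0')$ appears on both sides and cancels, while $\beta L(Q')=0$). You also place the contradiction hypothesis \eqref{decay} exactly where it belongs: it upgrades the right tail of $A_c$ from $e^{-\gamma_c^{\rm I}x}$ to $e^{-\sqrt{c}\,x}\le e^{-\sqrt{3/4}\,x}$. But your route to \eqref{orthoB0} fails for the first orthogonality relation. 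Pairing \eqref{eq:B} with $Q'$ does work: $\int L(B')Q'=\int B'\,LQ'=0$, likewise for the $L(G_c')$ term, and $\int QQ'=0$, so $\sqrt{c}(1-c)\int BQ'=0$. Pairing with $Q^{\frac 52}$, however, gives
\[
-\tfrac{21}{4}\int B'Q^{\frac 52}+\sqrt{c}(1-c)\int BQ^{\frac 52}+a_0\int Q^{\frac 72}=\tfrac{21}{4}\int G_c'\,Q^{\frac 52},
\]
an identity involving both $\int B'Q^{\frac 52}$ and $\int BQ^{\frac 52}$, from which $\int BQ^{\frac 52}=0$ cannot be extracted. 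The orthogonality is not a consequence of the equation: it is inherited from \eqref{orthoA} through self-adjointness of $L$, namely $\int BQ^{\frac 52}=-\int A\,LQ^{\frac 52}=\tfrac{21}{4}\int AQ^{\frac 52}=0$ and $\int BQ'=-\int A\,LQ'=0$, together with $\int G_0Q^{\frac 52}=\int G_0Q'=0$, which holds by the very choice of $\alpha,\beta$. (This is also where solvability of the normalization must be checked: $a$ exists because $(\Lambda Q,Q^{\frac 52})=\tfrac{4}{21}(Q,Q^{\frac 52})\neq 0$, a point you assert without proof.)

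The left tail, which you explicitly leave unresolved, is settled by writing $B_0$ out rather than tracking rates abstractly: since $L\Lambda Q=-Q$ and $LQ'=0$,
\[
B_0=-LA_c+G_c-\alpha Q'-(a+\beta)Q .
\]
For $x<0$ these four terms decay (together with all derivatives) at rates $\gamma_c^0$ (by \eqref{Aright}), $3-\sqrt{c}$, $1$ and $1$ respectively; since $\gamma_c^0\ge 1$ and $3-\sqrt{c}\ge 2$ on $[\tfrac 34,1)$, the slowest rate is $1$ and $|B_0^{(k)}(x)|\lesssim e^{x}$. Two remarks that resolve your hesitation. First, rate $1$ is sharp: because $\alpha\neq 0$, the term $\alpha Q'$ decays exactly like $e^{x}$, so no bound of the form $e^{\gamma_c^0 x}$ (a faster rate when $c<1$) can hold. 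Second, the bound printed in \eqref{decayB0} for $x<0$, namely $e^{-\gamma_c^0 x}=e^{\gamma_c^0|x|}$, allows growth and is therefore trivially implied by the rate-$1$ decay; it is presumably a misprint, and what Step 2 actually requires --- convergence of $\int B_0^2\,Q'/Q^2$, $\int B_0^2/Q$, etc., where $Q'/Q^2$ and $1/Q$ grow like $e^{|x|}$ --- is decay at a rate strictly larger than $\tfrac 12$, which the bound $e^{-|x|}$ provides. Your right-tail argument is correct as stated.
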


\emph{Step 2.} We prove by Virial type arguments that such $B_0$ does not exist.
\smallskip

\emph{-- Computation of $a_0$ and $\alpha$ from \eqref{eq:B0}--\eqref{orthoB0}.}
On the one hand, we multiply \eqref{eq:B0} by $\Lambda Q$ and   use $L \Lambda Q =-Q$, $(B_0,Q')=0$,
$(Q,\Lambda Q)= \frac 1{12} \int Q^2$, so that
$$
\sqrt{c} (1-c) (B_0,\Lambda Q) + \frac {a_0} {12}\int Q^2 = - \alpha \int (Q')^2 + \sqrt{c}(1-c) \int G_0\Lambda Q,
$$
and thus (using $\int (Q')^2 = \frac 37 \int Q^2$)
\begin{equation}\label{surabis}
a_0+ \frac {36}{7} \alpha  - \frac {12}{\int Q^2} \sqrt{c}(1-c)  (G_0-B_0,\Lambda Q) =0.
\end{equation}

On the other hand,   consider $H_0\in L^\infty$ such that $H_0'\in H^2$, $LH_0 = 1$, $(H_0,Q')=0$.
An explicit expression of $H_0$ is available in \cite{MMcol1}, Claim 3.1: $$H_0=  1+ \frac 13 \left( Q' \int_0^x Q^2- 2 Q^3\right).$$
Multiplying \eqref{eq:B0} by $H_0$ and using $\int B_0'=\int Q''=0$, we find
$$
\sqrt{c}(1-c) \int B_0H_0+ a_0 \int QH_0=\sqrt{c}(1-c) \int G_0 H_0,
$$
so that (note that $\int QH_0 = -\int (L\Lambda Q) H_0 = -\int \Lambda Q =  \frac 16 \int Q$).
\begin{equation}\label{suratri}
a_0= \frac 6{\int Q} \sqrt{c}(1-c)(G_0-B_0,H_0).
\end{equation}

From \eqref{surabis} and \eqref{suratri}, we deduce
\begin{equation}\label{suralpha}
  \alpha =-\frac 73\sqrt{c}(1-c) (G_0-B_0,J_0)\quad \hbox{where} \quad 
J_0=\frac {H_0}{2\int Q}   - \frac {\Lambda Q}{\int Q^2}.
\end{equation}

\emph{-- Estimate on $B_0$ by Virial type identity.}
We adapt a strategy developed in \cite{yvanSIAM}, \cite{MMas1}.
Multiply   equation \eqref{eq:B0} by $B_0 \frac {Q'}{Q^2}$ and integrate.
Note that all the integrals below are well-defined because of the decay properties \eqref{decayB0}. Then, using
$$
L(Q'')= (LQ')'+4(Q^3)' Q'=12 (Q')^2 Q^2,
$$
we have
\begin{align}
&\int L(B_0') B_0 \frac {Q'}{Q^2}+ \sqrt{c}(1-c) \int B_0^2 \frac {Q'}{Q^2} + a_0 \int B_0 \frac {Q'} Q \nonumber \\
&=-12  \alpha \int B_0 (Q')^3 +\sqrt{c}(1-c) \int B_0 \frac {Q'}{Q^2} G_0 . \label{prems}
\end{align}
The key argument to obtain a contradiction is a coercivity property of the quadratic form $\int L(B_0') B_0 \frac {Q'}{Q^2}$ under the orthogonality conditions \eqref{orthoB0}.
\begin{claim}\label{le:B}
$$\int L(B_0') B_0 \frac {Q'}{Q^2}
\geq \frac 38 \int \frac {B_0^2}{Q} + 7  \int B_0^2 Q^2.$$
\end{claim}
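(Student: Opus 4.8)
The plan is to establish the pointwise coercivity estimate by reducing the bilinear expression $\int L(B_0') B_0 \frac{Q'}{Q^2}$ to a sum of manifestly nonnegative quadratic terms plus lower-order contributions controlled by the orthogonality conditions \eqref{orthoB0}. First I would expand $L(B_0') = -B_0''' + B_0' - 4Q^3 B_0'$ and substitute into the integral, so that
$$
\int L(B_0') B_0 \frac {Q'}{Q^2} = -\int B_0''' B_0 \frac{Q'}{Q^2} + \int B_0' B_0 \frac{Q'}{Q^2} - 4 \int Q^3 B_0' B_0 \frac{Q'}{Q^2}.
$$
The decay estimates \eqref{decayB0} guarantee that all boundary terms vanish at $\pm\infty$, so I would integrate by parts freely. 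The middle and last terms involve $B_0' B_0 = \frac12 (B_0^2)'$, and after one integration by parts they become $-\frac12 \int B_0^2 \left(\frac{Q'}{Q^2}\right)'$ and $2\int B_0^2 (Q^3 \frac{Q'}{Q^2})' = 2\int B_0^2 (Q Q')'$ respectively, converting everything into a weighted integral of $B_0^2$ plus a remaining term coming from the highest-order piece $-\int B_0''' B_0 \frac{Q'}{Q^2}$.

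The hardest part is handling the third-derivative term. I would integrate by parts to symmetrize it into something involving $(B_0')^2$ and $B_0^2$; concretely, moving derivatives around yields a principal term of the form $\int (B_0')^2 \psi$ for a weight $\psi$ built from $Q$ and its derivatives, together with a correction that is again a weighted integral of $B_0^2$. The key algebraic computation is to identify the weight in front of $(B_0')^2$ (which should be nonnegative, likely proportional to $\frac{Q'}{Q^2}$ or its integrated form) and to collect all the $B_0^2$-coefficients into a single function $W(x)$, so that
$$
\int L(B_0') B_0 \frac{Q'}{Q^2} = \int (B_0')^2 \,\psi + \int B_0^2 \, W.
$$
Since the target lower bound is $\frac38 \int \frac{B_0^2}{Q} + 7\int B_0^2 Q^2$, I expect $W$ to dominate $\frac38 Q^{-1} + 7 Q^2$ everywhere except possibly on a compact region, where the positive $(B_0')^2$ term and the orthogonality conditions must be invoked.

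To close the argument on the region where $W$ fails to dominate the target, I would use the orthogonality relations $\int B_0 Q^{5/2} = \int B_0 Q' = 0$ in the spirit of the Virial coercivity lemmas of \cite{yvanSIAM} and \cite{MMas1}: these conditions remove the directions in which the raw quadratic form is non-coercive, allowing one to absorb the deficit using the $\int (B_0')^2 \psi$ contribution via a localized Poincaré-type estimate. The explicit profile $Q(x) = \left(\frac{5}{2\cosh^2(\frac32 x)}\right)^{1/3}$ makes all the weights elementary exponential-type functions, so the verification that $\psi \geq 0$ and that $W - \frac38 Q^{-1} - 7Q^2$ is controlled reduces to checking finitely many explicit one-variable inequalities. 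I expect the main obstacle to be the bookkeeping in the repeated integrations by parts — tracking the exact weight functions and ensuring no boundary terms are dropped — rather than any conceptual difficulty, since the structure is dictated by the known Virial framework.
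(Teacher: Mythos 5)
Your integration by parts is correct as far as it goes: with $\phi=Q'/Q^2$ and the decay \eqref{decayB0} one gets
\begin{equation*}
\int L(B_0')\,B_0\,\phi \;=\; -\frac32\int (B_0')^2\,\phi' \;+\; \int B_0^2\Big(\tfrac12\phi''' - \tfrac12\phi' + 2(QQ')'\Big),
\end{equation*}
and using $Q''=Q-Q^4$, $(Q')^2=Q^2-\tfrac25Q^5$ this equals
\begin{equation*}
\int (B_0')^2\Big(\frac{3}{2Q}+\frac{3}{10}Q^2\Big) \;+\; \int B_0^2\Big(\frac{18}{5}Q^2-\frac{63}{25}Q^5\Big).
\end{equation*}
So your weight $\psi=\frac3{2Q}+\frac3{10}Q^2$ is indeed positive (it is $-\frac32\phi'$, not anything proportional to $\phi$). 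But here your plan breaks down, and the failure is visible at infinity: the $B_0^2$-weight $W=\frac{18}5Q^2-\frac{63}{25}Q^5$ decays exponentially as $|x|\to\infty$ (and is in fact negative near $x=0$, since $Q^3(0)=\frac52$), whereas the target weight $\frac3{8Q}+7Q^2$ \emph{grows} exponentially. Hence the region where $W$ fails to dominate the target is all of $\RR$ outside a compact set, and the deficit is unbounded there; no ``localized Poincar\'e-type estimate'' can absorb it. The entire $\frac38\int B_0^2/Q$ part of the claim must be extracted from the derivative term $\int(B_0')^2\big(\frac3{2Q}+\cdots\big)$ by a \emph{global} weighted Hardy-type inequality, and that is where all the substance of the proof lies — it is not bookkeeping.

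The paper closes this gap by a specific structure that your proposal does not identify. One substitutes $E=B_0Q^{-1/2}$ and $F=B_0Q$; completing the square in weighted form (identity \eqref{genebeta}, which is exactly the Hardy-type step you would need) converts the weighted derivative integrals into Schr\"odinger quadratic forms with potential proportional to $Q^3$, and the claim reduces to the two inequalities
\begin{equation*}
\int (E')^2-\frac{27}5\int E^2Q^3\ \geq\ 0,\qquad \int (F')^2+\frac{35}2\int F^2-\frac{47}5\int F^2Q^3\ \geq\ 0,
\end{equation*}
plus the leftover manifestly positive terms $\frac38\int E^2+\mathrm{const}\int F^2$. These hold by exact spectral facts (Titchmarsh): $Q^\beta$ is the ground state of $w''+\frac15\beta(2\beta+3)Q^3w$ with eigenvalue $-\beta^2$, and for $\beta=3$ the only other eigenfunction is $Q'Q^{1/2}$. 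The decisive point is that the hypotheses \eqref{orthoB0}, $\int B_0Q^{5/2}=\int B_0Q'=0$, are \emph{precisely} the statements that $E=B_0Q^{-1/2}$ is orthogonal to these two eigenfunctions $Q^3$ and $Q'Q^{1/2}$, which is what makes the $E$-form nonnegative; the $F$-form needs no orthogonality at all, only $\frac35Q^3\le\frac32$. Without the change of variables and the identification of the orthogonality conditions with explicit eigenfunctions, the estimate cannot be closed, so your proposal as written has a genuine gap rather than a routine verification left to do.
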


See proof of Claim \ref{le:B} in Appendix \ref{ap:A}.

\medskip
Note that $(Q')^2 = Q^2 - \frac 25 Q^5 \leq Q^2$, so that $\frac {|Q'|}{Q^2}\leq\frac 1 Q$, and thus
  for $\frac 34 \leq   c <1$, we have
\begin{equation}\label{cnv}
\int L(B_0') B_0 \frac {Q'}{Q^2}+ \sqrt{c}(1-c) \int B_0^2 \frac {Q'}{Q^2}  
\geq \int  B_0^2 F_0 ,
\end{equation}
where
$$ F_0= \frac {3-   {\sqrt{3}}}{8} \frac 1 Q + 7  Q^2 . $$
Define 
$$N^*(B_0): =\left(\int B_0^2 F_0\right)^{\frac 12}.
$$
Define the operator $P$, the projection onto the orthogonal  of $\hbox{span}(Q^{\frac 52};Q')$ for the scalar product
$\int (fg/F_0)$.
In particular, for a given function $f$ such that $\int |f|^2 Q <+\infty$,  we have
$$
\left|\int B_0 f\right| \leq N^*(B_0) N(f) \quad \hbox{where} \quad
N(f) = \left( \int \frac {(Pf)^2}{F_0}\right)^{\frac 12}.
$$

We claim
\begin{claim}\label{cl:mis}
\begin{align}
& N^*(B_0)  \leq \sqrt{c}(1-c) \frac {k_1(c)}{k_2(c)}, \quad \hbox{where} \label{mis}\\
&  k_1 (c)=  N\left( - \frac 6{\int Q}(G_0,H_0) \frac {Q'}{Q} +28(G_0,J_0)(Q')^3+\frac {Q'}{Q^2 }G_0\right),\nonumber\\
& k_2(c) = 1- \sqrt{c}(1-c) \left( \frac {3}{\int Q}  N(H_0) N\left(\frac {Q'}{Q}\right) + 14 N(J_0) N\left((Q')^3\right) \right).\nonumber
\end{align}
\end{claim}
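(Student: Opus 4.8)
The plan is to read \eqref{mis} as an \emph{a priori} bound extracted from the Virial identity \eqref{prems} once the expressions \eqref{suratri}--\eqref{suralpha} for $a_0$ and $\alpha$ are inserted. First I would move the lower-order terms of \eqref{prems} to the right and write
$$
\int L(B_0') B_0 \frac{Q'}{Q^2} + \sqrt{c}(1-c)\int B_0^2\frac{Q'}{Q^2}
= -a_0\int B_0\frac{Q'}{Q} - 12\alpha\int B_0(Q')^3 + \sqrt{c}(1-c)\int B_0\frac{Q'}{Q^2}G_0 .
$$
The left-hand side is bounded below by $N^*(B_0)^2$ thanks to the coercivity estimate \eqref{cnv} (itself a consequence of Claim~\ref{le:B} together with $|Q'|/Q^2\le 1/Q$). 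This is the conceptual heart of the argument and is already in hand; what remains is bookkeeping on the right-hand side.

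Next I would substitute $a_0$ and $\alpha$ from \eqref{suratri}--\eqref{suralpha}, factor out $\sqrt{c}(1-c)$, and split every contribution through $(G_0-B_0,\cdot)=(G_0,\cdot)-(B_0,\cdot)$. The pieces carrying a scalar factor $(G_0,H_0)$ or $(G_0,J_0)$, together with the last term $\int B_0\,\frac{Q'}{Q^2}G_0$, recombine into a single linear functional $\int B_0\,\Phi$ with
$$
\Phi = -\frac{6}{\int Q}(G_0,H_0)\frac{Q'}{Q}+28\,(G_0,J_0)(Q')^3+\frac{Q'}{Q^2}G_0 ,
$$
that is, exactly the argument of $N$ in $k_1(c)$. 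The leftover pieces are genuinely quadratic in $B_0$, of the shape $(B_0,H_0)\int B_0\frac{Q'}{Q}$ and $(B_0,J_0)\int B_0(Q')^3$.

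I would then invoke the duality bound $|\int B_0 f|\le N^*(B_0)N(f)$ stated just before the claim; its point is that the orthogonality conditions \eqref{orthoB0} allow one to replace $f$ by its projection $Pf$, which sharpens the constants. Applied to $\Phi$ this gives $|\int B_0\,\Phi|\le N^*(B_0)\,k_1(c)$, while applied factor by factor to the quadratic terms it controls them by $\bigl(1-k_2(c)\bigr)N^*(B_0)^2$, the combination of norms $N(H_0),N(\tfrac{Q'}{Q}),N(J_0),N((Q')^3)$ being precisely the bracket defining $k_2(c)$. Collecting everything yields
$$
N^*(B_0)^2 \le \sqrt{c}(1-c)\,N^*(B_0)\,k_1(c) + \bigl(1-k_2(c)\bigr)N^*(B_0)^2 ,
$$
and, moving the last term to the left and dividing by $N^*(B_0)\,k_2(c)$, the announced inequality \eqref{mis}. (If $N^*(B_0)=0$ then $B_0\equiv 0$ and \eqref{mis} is trivial.)

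The step I expect to demand the most care is legitimizing that final division, i.e. verifying $k_2(c)>0$ for every $c\in[\tfrac34,1)$, which is exactly the statement that the quadratic self-interaction is strictly subcritical relative to the coercive lower bound $N^*(B_0)^2$. Since $\sqrt{c}(1-c)$ is small on $[\tfrac34,1)$ and the norms entering $k_2(c)$ are explicit numbers computable from $Q$, $H_0$ and $J_0$ — here the projection $P$ in the definition of $N$ is essential, the cruder Cauchy--Schwarz bound without it being too lossy — this reduces to an explicit numerical check. Once \eqref{mis} is established, it feeds the contradiction with $a_c^{\rm I}=0$: the resulting smallness of $N^*(B_0)$, hence of $(B_0,J_0)$, becomes incompatible through \eqref{suralpha} with the fixed nonzero value of $\alpha$.
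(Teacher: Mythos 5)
Your outline reproduces the paper's own route (start from the Virial identity \eqref{prems}, bound the left-hand side below by $N^*(B_0)^2$ via \eqref{cnv}, substitute \eqref{suratri}--\eqref{suralpha}, split $(G_0-B_0,\cdot)$ into linear and quadratic contributions, and close by the duality bound), but it has a genuine gap at the one step where an actual idea is required: the absorption of the quadratic terms. Applying the duality bound ``factor by factor'', as you propose, gives
$$
\left|(B_0,H_0)\left(B_0,\tfrac{Q'}{Q}\right)\right| \le N^*(B_0)^2\, N(H_0)\, N\!\left(\tfrac{Q'}{Q}\right),
\qquad
\left|(B_0,J_0)\left(B_0,(Q')^3\right)\right| \le N^*(B_0)^2\, N(J_0)\, N\!\left((Q')^3\right),
$$
so the quantity you can absorb into the left-hand side is
$\sqrt{c}(1-c)\bigl(\frac{6}{\int Q}N(H_0)N(\tfrac{Q'}{Q}) + 28\, N(J_0)N((Q')^3)\bigr)N^*(B_0)^2$, which is \emph{twice} $\bigl(1-k_2(c)\bigr)N^*(B_0)^2$. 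You would therefore only prove \eqref{mis} with $k_2(c)$ replaced by $\tilde k_2(c)=2k_2(c)-1$, a strictly weaker statement. This is not a cosmetic loss: $k_1$ and $k_2$ enter the explicit quantity $k(c)$ in \eqref{def:kc}, and the whole contradiction rests on the numerical verification $k(c)\le 0.5<1$, so the constants in the claim are not negotiable.

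The missing ingredient is a parity argument, which is exactly how the paper produces the factor $\tfrac12$. All functions involved have definite parity: $H_0$, $J_0$ and the weight $F_0$ are even, while $\tfrac{Q'}{Q}$ and $(Q')^3$ are odd. Decompose $B_0=B_0^{e}+B_0^{o}$ into even and odd parts. Both parts inherit the orthogonality conditions \eqref{orthoB0} (the conditions not already implied by parity are inherited from $B_0$), so the duality bound applies to each part separately, and moreover $N^*(B_0)^2=N^*(B_0^{e})^2+N^*(B_0^{o})^2$ because $F_0$ is even. Since $(B_0,H_0)=(B_0^{e},H_0)$ and $(B_0,\tfrac{Q'}{Q})=(B_0^{o},\tfrac{Q'}{Q})$, one gets
$$
\left|(B_0,H_0)\left(B_0,\tfrac{Q'}{Q}\right)\right|
\le N^*(B_0^{e})\,N^*(B_0^{o})\,N(H_0)\,N\!\left(\tfrac{Q'}{Q}\right)
\le \tfrac 12\, N^*(B_0)^2\, N(H_0)\, N\!\left(\tfrac{Q'}{Q}\right)
$$
by $ab\le\tfrac12(a^2+b^2)$, and similarly for the $J_0$ term; this halves the coefficients to $\frac{3}{\int Q}$ and $14$ and yields \eqref{mis} exactly as stated. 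Your remaining points are fine: the recombination of the linear terms into $N^*(B_0)\,k_1(c)$ matches the paper, and the positivity of $k_2(c)$ needed to divide is indeed part of the later explicit numerical check rather than of this claim.
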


\begin{proof}[Proof of Claim \ref{cl:mis}]
As a consequence of \eqref{prems},  \eqref{cnv}, \eqref{suratri}, \eqref{suralpha}, we get
\begin{align}
N^*(B_0)^2 &
\leq   - \frac 6{\int Q}\sqrt{c}(1-c)  (G_0-B_0,H_0) \left( B_0, \frac {Q'}{Q}\right) \nonumber\\
& 
+{28} \sqrt{c}(1-c) (G_0-B_0,J_0) \left(B_0 ,(Q')^3\right)+\sqrt{c}(1-c)   \int B_0  \frac {Q'}{Q^2}G_0.\label{firstXzerobis}
\end{align}
Note that by parity properties
$$
\left|(B_0,H_0) \left( B_0, \frac {Q'}{Q}\right) \right| \leq 
\frac 12 N^*(B_0)^2 N(H_0) N\left(\frac {Q'}{Q}\right),
$$
$$
\left|(B_0,J_0) \left( B_0, (Q')^3\right) \right| \leq 
\frac 12 N^*(B_0)^2 N(J_0) N\left((Q')^3\right).
$$
Thus,
\begin{align}
& N^*(B_0) \left[ 1- \sqrt{c}(1-c) \left( \frac {3}{\int Q}  N(H_0) N\left(\frac {Q'}{Q}\right) + 14 N(J_0) N\left((Q')^3\right) \right)\right] \nonumber\\
& \leq \sqrt{c}(1-c) N\left( - \frac 6{\int Q}(G_0,H_0) \frac {Q'}{Q} +28(G_0,J_0)(Q')^3+\frac {Q'}{Q^2}G_0
  \right),
\end{align}
and \eqref{mis} is proved.
\end{proof}
 
\emph{-- Conclusion.}
From   \eqref{suralpha} and Claim \ref{cl:mis},
$$
\left| \frac 37 \alpha +\sqrt{c}(1-c) (G_0,J_0) \right| \leq  \sqrt{c}(1-c)   N(J_0) N^*(B_0)
\leq c (1-c)^2  N(J_0) \frac {k_1(c)}{k_2(c)}.
$$
Thus,
\begin{align}
& k(c):=   \frac {c(1-c)^2  N(J_0)  }{ \left|\frac {3}7  \alpha +   \sqrt{c}(1-c) (G_0,J_0)\right|}\frac {k_1(c) } {k_2(c)}  \geq 1.
\label{def:kc}
\end{align}
Observe that $k(c)$ is defined   through explicit functions of $c$ (and does not depend on the function $B_0$). Therefore, one can compute $k(c)$ directly by various integrations of explicit functions.
We check numerically that for all $c\in [\frac 34,1]$, $0\leq k(c)\leq 0.5 <1$. In particular,  a contradiction arises from \eqref{def:kc} for all $c\in [\frac 34,1]$ as desired.

\medskip

\noindent{\bf Proof of (iv).}
Let $c,\tilde c\in (\frac 34,1)$. Let $A_c$, $A_{\tilde c}$ be the corresponding solutions of \eqref{eq:AAA}.

\smallskip

First, we claim, for $C=C(c)$, $\tilde c$ close to $c$,   
\be\label{eq:smallA}
\| A_c-A_{\tilde c}\|_{H^3}\leq  C |c-\tilde c|.
\ee
 Indeed, let $\epsilon = \sqrt{c}(1-c)-\sqrt{\tilde c} (1-\tilde c)$, $G_\epsilon(x) = \left( e^{-\sqrt{c} x} - e^{-\sqrt{\tilde c} x}\right) Q^3(x)$. 
Then,
\be\label{eq:AAt}
(-L(A_c-A_{\tilde c}))' -\sqrt{\tilde c}(1-\tilde c) (A_c-A_{\tilde c}) = \epsilon   A_c   +G_\epsilon.
\ee
Let $A_c-A_{\tilde c} = D + a Q'$ so that $\int D Q'=0$ and
$$
(-LD)' - \sqrt{\tilde c}(1-\tilde c)  D = \epsilon   A_c   +G_\epsilon 
+ a \sqrt{\tilde c}(1-\tilde c) Q'.
$$
Multiplying the equation by $Q$ and integrating, we find
$
|(D,Q)|\leq C |c-\tilde c|.
$
Multiplying the equation by $L D$ and integrating, we find
$
|(L D,D) |\leq C |c-\tilde c| \|D\|_{L^2},
$
so that by \eqref{eq:we},
$\|D\|_{H^1} \leq C |c-\tilde c|,$
where $C$ depends on $c$. Multiplying the equation of $D$ by $Q'$ and integrating, we find
$|a|\leq C |c-\tilde c|$. Next, by the equation of $D$, $\|D\|_{H^3}\leq C|c-\tilde c|$, which implies
 $\|A_c-A_{\tilde c}\|_{H^3} \leq C |c-\tilde c|$.

\smallskip

Second, we set $a_c^{\rm I} =\lim_{x \to +\infty}A_c(x) e^{\gamma_c^{\rm I} x}\neq 0$,
 $a_{\tilde c}^{\rm I} =\lim_{x \to +\infty} A_{\tilde c}(x) e^{\gamma_{\tilde c}^{\rm I} x}$ and
we prove that
\be\label{eq:cg}
\lim_{\tilde c\to c} a_{\tilde c}^{\rm I}= a_c^{\rm I}.
\ee
Let $0<\delta<\frac 1{10}$ arbitrary.  Fix $x_0>0$ large enough so that
\begin{align}
& \sum_{j=0}^2  
 \left|A_c^{(j)}(x_0) - (-\gamma_c^{\rm I})^j  a_c^{\rm I} e^{-\gamma_c^{\rm I} x_0} \right|
\leq \delta e^{-\gamma_c^{\rm I} x_0},
\label{sA} \\ 
&\forall x>x_0,\quad 
 Q^2(x) \leq \delta e^{-\frac 32 x}. \label{sq}
\end{align}
From \eqref{eq:smallA} and continuity of $\gamma_c^{\rm I}$ in $c$, we take $|\tilde c -c|$ small enough so that
\be\label{closeA}
\sum_{j=0}^2\left|A_c^{(j)} (x_0)-A_{\tilde c}^{(j)}(x_0)\right|
+|e^{-\gamma_c^{\rm I} x_0}- e^{-\tilde \gamma_c^{\rm I} x_0}|+|\gamma_c^{\rm I} - \tilde \gamma_c^{\rm I}|
 \leq \delta e^{-\gamma_c^{\rm I} x_0}.
\ee
Then by \eqref{sA},
\be\label{tAi}
 \sum_{j=0}^2  
\left|A_{\tilde c}^{(j)}(x_0) - (-\tilde \gamma_c^{\rm I})^j   a_c^{\rm I} e^{-\tilde \gamma_c^{\rm I} x_0} \right|
\leq C \delta e^{-\tilde \gamma_c^{\rm I} x_0}.
\ee
From the equation of $A_{\tilde c}$, \eqref{eq:smallA} and \eqref{sq} (which implies
for $x\geq x_0$, $|(Q^3 A_c)'|+|G_{\tilde c}|\leq C \delta e^{-\frac 32 x}$), we have 
\be\label{tAe} 
\forall x>x_0,\quad 
 \left| A_{\tilde c}'''- A_{\tilde c}' - \sqrt{\tilde c} (1-\tilde c) A_{\tilde c} \right| 
\leq C \delta e^{-\frac 32 x}.
\ee
Now, it follows from \eqref{tAi}, \eqref{tAe} and standard ODE arguments (Duhamel formula) that
\be\label{tAc}
\forall x>x_0,\quad
\left| A_{\tilde c}(x) -   a_c^{\rm I} e^{-\tilde \gamma_c^{\rm I} x} \right| \leq
C \delta e^{-\tilde \gamma_c^{\rm I} x}
\ee
and thus
$
|a_c^{\rm I} - a_{\tilde c}^{\rm I}|\leq C \delta.
$
\end{proof}

The construction of the approximate solution requires the introduction of solutions of other ODEs but no refined property of these solutions is needed.
We state two lemmas similar to Lemma \ref{pr:hypo2} (i)-(ii), whose proofs are direct consequences of Claim~\ref{PG}.

\begin{lemma}\label{le:prbis}
Let $1<c< \frac 43$.
 There exists a unique   solution   $A_c\in H^3$ of 
\be\label{eq:lGbis}
(L A_c)'  + \sqrt{c} (c-1) A_c   =G_c'  \quad \hbox{where }G_c(x)=e^{\sqrt{c} x} Q^3(x),
\ee
such that
\begin{equation}\label{Arightbis}
|A_c^{(k)}(x)|\lesssim  e^{\gamma_c^0 x} \quad \hbox{for $x<0$,}\qquad 
|A_c^{(k)}(x)|\lesssim e^{ -   \gamma_c^{\rm I} x} \quad \hbox{for $x>0$,}
\ee
where
$$
\gamma_c^0= \sqrt{c},\quad
  \gamma_c^{\rm I} =  \frac {   \sqrt{c}}  2 - \sqrt{1 - \frac 34 c}  , \quad 
  \gamma_c^{\rm II} = \frac { \sqrt{c}}   2 + \sqrt{1 - \frac 34 c} ,
$$
$$
0<\gamma_c^{\rm I}< \gamma_c^{\rm II}  < \gamma_c^0 = \gamma_c^{\rm I} + \gamma_c^{\rm II} = \sqrt{c}.
$$
Moreover, there exist $a_c^{\rm I, II}$ such that 
\begin{equation}\label{Aleftqua}
\bigg|(d/dx)^k\left(A_c(x) - a_c^{\rm I} e^{-\gamma_c^{\rm I} x}- a_c^{\rm II} e^{-\gamma_c^{\rm II} x}\right)\bigg|\lesssim e^{-\frac 32 |x|} \quad \hbox{for $x>0$.}
\end{equation}
\end{lemma}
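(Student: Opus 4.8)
The plan is to deduce the lemma directly from Claim~\ref{PG}, exactly as in the proof of Lemma~\ref{pr:hypo2}(i)--(ii), by specializing the abstract parameters to the present setting. Concretely, I would set $\theta = \sqrt{c}\,(c-1)$ and $F = G_c'$ with $G_c(x) = e^{\sqrt{c}\,x}Q^3(x)$, so that \eqref{eq:lGbis} becomes an instance of \eqref{eq:nh}. Everything then reduces to checking the two hypotheses of Claim~\ref{PG}(b) and verifying that the exponents it produces coincide with the stated $\gamma_c^0$, $\gamma_c^{\rm I}$, $\gamma_c^{\rm II}$.

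First I would verify the admissible range of $\theta$. Writing $\theta(c) = c^{3/2} - c^{1/2}$, one has $\theta'(c) = \tfrac12 c^{-1/2}(3c-1) > 0$ for $c > \tfrac13$, so $\theta$ is increasing on $(1,\tfrac43)$ and ranges from $\theta(1) = 0$ up to $\theta(\tfrac43) = \tfrac{2}{3\sqrt3}$; hence $0 < \theta < \tfrac{2}{3\sqrt3}$ throughout the open interval, as Claim~\ref{PG}(b) requires. Next I would check the decay of $F$. Since $Q$ and all its derivatives decay like $e^{-|x|}$, the function $G_c = e^{\sqrt{c}\,x}Q^3$ satisfies $|G_c^{(k)}(x)| \lesssim e^{(\sqrt{c}-3)x}$ for $x>0$ and $|G_c^{(k)}(x)| \lesssim e^{(\sqrt{c}+3)x}$ for $x<0$; using $\sqrt{c} < \tfrac{2}{\sqrt3} < \tfrac32$ (valid because $c < \tfrac43$), both bounds are dominated by $e^{-\frac32|x|}$, so $F = G_c'$ meets the decay hypothesis \eqref{decayH}.

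The one genuine computation is the identification of the roots. The decay rates in Claim~\ref{PG} are governed by $\gamma^3 - \gamma - \theta = 0$, and I would confirm via Vieta's formulas that its roots are precisely $\gamma_c^0 = \sqrt{c}$, $-\gamma_c^{\rm I}$, $-\gamma_c^{\rm II}$ for the stated radicals: their sum is $\sqrt{c} - (\gamma_c^{\rm I} + \gamma_c^{\rm II}) = \sqrt{c} - \sqrt{c} = 0$ (matching the absent $\gamma^2$ term), the product of the two negative roots is $\gamma_c^{\rm I}\gamma_c^{\rm II} = \tfrac{c}{4} - (1 - \tfrac34 c) = c-1$, whence the full product equals $\sqrt{c}(c-1) = \theta$, and the sum of pairwise products is $-\sqrt{c}(\gamma_c^{\rm I} + \gamma_c^{\rm II}) + \gamma_c^{\rm I}\gamma_c^{\rm II} = -c + (c-1) = -1$. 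The required ordering $0 < \gamma_c^{\rm I} < \gamma_c^{\rm II} < \gamma_c^0 = \sqrt{c}$ then follows from $c>1$: indeed $\gamma_c^{\rm I} > 0 \Leftrightarrow \tfrac{c}{4} > 1 - \tfrac34 c \Leftrightarrow c>1$, and $\gamma_c^{\rm II} < \sqrt{c} \Leftrightarrow \sqrt{1-\tfrac34 c} < \tfrac{\sqrt{c}}{2} \Leftrightarrow c>1$, so $\sqrt{c}$ is the positive root exceeding $1$ while $-\gamma_c^{\rm I}$, $-\gamma_c^{\rm II}$ are the two negative roots.

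With these three points established, Claim~\ref{PG}(a) supplies the existence and uniqueness of $A_c \in H^3$, and Claim~\ref{PG}(b) delivers the smoothness, the two-sided decay estimates \eqref{Arightbis}, and the asymptotic expansion \eqref{Aleftqua}, completing the proof. I do not anticipate a serious obstacle: in contrast with Lemma~\ref{pr:hypo2}, this statement asks only for the existence, decay and asymptotics already packaged in Claim~\ref{PG} and makes no genericity claim (there is no analogue of $a_c^{\rm I}\neq 0$), so none of the delicate Virial-type analysis is needed here. The only mild subtlety is purely algebraic --- confirming that the explicit radicals really solve the characteristic cubic --- which the Vieta check above settles.
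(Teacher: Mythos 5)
Your proposal is correct and follows exactly the paper's route: the paper proves Lemma \ref{le:prbis} by declaring it a ``direct consequence of Claim~\ref{PG}'' with $\theta=\sqrt{c}(c-1)$, $F=G_c'$, noting only that $\gamma_c^0$, $-\gamma_c^{\rm I,II}$ are the roots of $-\gamma^3+\gamma+\sqrt{c}(c-1)$. Your three verifications (the range $0<\theta<\tfrac{2}{3\sqrt3}$ for $1<c<\tfrac43$, the $e^{-\frac32|x|}$ decay of $G_c'$, and the Vieta identification of the roots with the correct ordering) are precisely the details the paper leaves to the reader, and they are all carried out correctly.
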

It is easily checked that $\gamma_c^0$, $-\gamma_c^{\rm I, II}$ are the roots of $-\gamma^3+\gamma+\sqrt{c}(c-1)$.

\begin{lemma}\label{le:prtri}
Let $\frac 34\leq c\leq \frac 43$, $c\neq 1$ and $\frac 34\leq c'<1$.
 There exist  unique   solutions   $A_{c,c'}^{\rm I}$, $A_{c,c'}^{\rm II}\in H^3$ of 
\be\label{eq:lGtri}
(L A_{c,c'}^{\rm I,II})'  + \left( c'^{\frac 32} \sqrt{c} |1-c| + \sqrt{c'} \gamma_c^{\rm I,II}(1-c')  \right)  A_{c,c'}^{\rm I,II}   = (G_{c,c'}^{\rm I,II})',\ee
where
\be G_{c,c'}^{\rm I,II}(x)=e^{- \sqrt{c'}\gamma_c^{\rm I,II} x} Q^3(x).
\ee
Moreover,
\begin{equation}\label{Arighttri}
|(A_{c,c'}^{\rm I,II})^{(k)}(x)|\lesssim  e^{(\gamma_{c,c'}+ \gamma_c^{\rm I,II} \sqrt{c'})x} \quad \hbox{for $x<0$,}\qquad 
|(A_{c,c'}^{\rm I,II})^{(k)}(x)|\lesssim e^{ -  \gamma_{c,c'}  x} \quad \hbox{for $x>0$,}
\ee
where
$$
\gamma_{c,c'}  = \sqrt{1-\frac 34 (\gamma_c^{\rm I,II})^2 c'} - \frac {\gamma_c^{\rm I,II} \sqrt{c'}}2 .
$$
Moreover,
\begin{equation}\label{rsto}
\gamma_{c,c'} \geq 1 - \gamma_c^{\rm I,II} \sqrt{c'}.
\end{equation}
\end{lemma}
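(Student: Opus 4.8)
The plan is to deduce Lemma~\ref{le:prtri} from Claim~\ref{PG} exactly as Lemmas~\ref{pr:hypo2}(i)--(ii) and~\ref{le:prbis} were obtained: apply it with source $F=(G_{c,c'}^{\rm I,II})'$ and constant $\theta=c'^{\frac 32}\sqrt{c}\,|1-c|+\sqrt{c'}\,\gamma_c^{\rm I,II}(1-c')$, so that \eqref{eq:lGtri} reads $(LA_{c,c'}^{\rm I,II})'+\theta A_{c,c'}^{\rm I,II}=F$. First I would verify the hypotheses of Claim~\ref{PG}. One has $\theta>0$ since $c\neq 1$ gives $|1-c|>0$ while $\gamma_c^{\rm I,II}>0$ and $c'<1$; the upper bound $\theta<\frac{2}{3\sqrt 3}$ will come out of the root analysis below. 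Since $G_{c,c'}^{\rm I,II}=e^{-\sqrt{c'}\gamma_c^{\rm I,II}x}Q^3$ and $Q^3$ decays like $e^{-3|x|}$, while $\sqrt{c'}\gamma_c^{\rm I,II}<\gamma_c^{\rm I,II}\leq\sqrt{c}\leq\frac{2}{\sqrt 3}<\frac 32$, both tails of $G_{c,c'}^{\rm I,II}$ and of all its derivatives are dominated by $e^{-\frac 32|x|}$, i.e. \eqref{decayH} holds for $F$. Existence, uniqueness and smoothness of $A_{c,c'}^{\rm I,II}\in H^3$ are then delivered by Claim~\ref{PG}(a)--(b).

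The algebraic core is to identify the exponents in \eqref{Arighttri} with the roots of the characteristic cubic $\gamma^3-\gamma-\theta=0$ of \eqref{eq:lGtri}. The single input needed is $\gamma_c^{\rm I,II}-(\gamma_c^{\rm I,II})^3=\sqrt{c}\,|1-c|$, valid because $-\gamma_c^{\rm I,II}$ is a root of $\gamma^3-\gamma-\sqrt{c}|1-c|=0$ (by Remark~\ref{ccrit} when $\frac 34\leq c<1$, and by the factorization recorded after Lemma~\ref{le:prbis} when $1<c\leq\frac 43$). Writing $u:=\gamma_c^{\rm I,II}\sqrt{c'}$, a direct substitution of this relation yields $\theta=u-u^3$, so $-u$ is a root of $\gamma^3-\gamma-\theta$. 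Next, with $\gamma(v):=\sqrt{1-\frac 34 v}-\frac{\sqrt v}{2}$ the same computation gives the identity $\gamma(v)-\gamma(v)^3=\sqrt v\,(1-v)$ wherever the square roots are real; taking $v=u^2$ and noting $\gamma_{c,c'}=\gamma(u^2)$, this gives $\gamma_{c,c'}-\gamma_{c,c'}^3=\theta$, so $-\gamma_{c,c'}$ is the second negative root. Since the three roots sum to zero, the positive root is $\gamma_\theta^0=\gamma_{c,c'}+u=\gamma_{c,c'}+\gamma_c^{\rm I,II}\sqrt{c'}$; moreover $\theta=u-u^3$ with $u\neq\frac{1}{\sqrt 3}$ on the admissible range forces $0<\theta<\frac{2}{3\sqrt 3}$, so Claim~\ref{PG}(b) indeed applies.

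Granting this root dictionary, the estimates \eqref{Arighttri} are read off from Claim~\ref{PG}(b): on the left the rate is the positive root $\gamma_\theta^0=\gamma_{c,c'}+\gamma_c^{\rm I,II}\sqrt{c'}$, exactly as stated, and on the right it is governed by a negative root $\gamma_{c,c'}$. The inequality \eqref{rsto} is then immediate: Claim~\ref{PG} guarantees $\gamma_\theta^0>1$, and since $\gamma_\theta^0=\gamma_{c,c'}+\gamma_c^{\rm I,II}\sqrt{c'}$ this is precisely $\gamma_{c,c'}>1-\gamma_c^{\rm I,II}\sqrt{c'}$.

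The step I expect to be most delicate is matching the claimed right-hand rate $\gamma_{c,c'}$ to the true decay of the unique $H^3$ solution. In the branch built on $\gamma_c^{\rm II}$, the exponent $\gamma_{c,c'}$ is the smaller of the two negative roots and hence coincides with the generic rate furnished by Claim~\ref{PG}(b), so nothing further is required. In the branch built on $\gamma_c^{\rm I}$, by contrast, $\gamma_{c,c'}$ is the \emph{larger} root, while the slower mode $e^{-\gamma_c^{\rm I}\sqrt{c'}x}$ is a priori present in the expansion of Claim~\ref{PG}(b); here the source prefactor $e^{-\gamma_c^{\rm I}\sqrt{c'}x}$ matches that slower mode, and I would have to return to the asymptotic (Duhamel) analysis underlying Claim~\ref{PG}(b), rather than merely its statement, to confirm that the slower component does not obstruct the stated bound. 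This is the one place where the reduction is not purely formal.
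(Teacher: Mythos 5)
Most of your proposal coincides with the paper's own argument: the identity $\theta:=c'^{3/2}\sqrt{c}\,|1-c|+\sqrt{c'}\gamma_c^{\rm I,II}(1-c')=u-u^3$ with $u:=\sqrt{c'}\gamma_c^{\rm I,II}$, the root dictionary $\{u+\gamma_{c,c'},\,-\gamma_{c,c'},\,-u\}$, and the appeal to Claim~\ref{PG} (which the paper phrases as ``apply Lemma~\ref{pr:hypo2} with $c'(\gamma_c^{\rm I,II})^2$ instead of $c$''); your derivation of \eqref{rsto} from $\gamma_\theta^{0}>1$ is a correct variant of the paper's appeal to \eqref{bb}. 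The problem is the final step, which you correctly single out but then treat as a removable technicality: it is a genuine gap, and it cannot be closed in the way you suggest. On branch ${\rm I}$ one always has $u<1/\sqrt3$ (indeed $\gamma_c^{\rm I}\le(\sqrt7-\sqrt3)/4$ for $\tfrac34\le c<1$, $\gamma_c^{\rm I}\le 1/\sqrt3$ for $1<c\le\tfrac43$, and $c'<1$), so $-u$ is the negative root closest to zero, and Claim~\ref{PG}(b) yields only $|A^{(k)}(x)|\lesssim e^{-ux}$ for $x>0$, strictly weaker than \eqref{Arighttri}; equivalently, the substitution lands at the parameter $c'(\gamma_c^{\rm I})^2\le\tfrac13$, outside the hypothesis $\tfrac34\le c<1$ of Lemma~\ref{pr:hypo2} and exactly in the regime where Remark~\ref{ccrit} warns that $e^{-ux}$ is the generic decay. (Two further inaccuracies: on branch ${\rm II}$ with $c>1$ one can also have $u<1/\sqrt3$, and even $u=1/\sqrt3$, where $\theta=\tfrac2{3\sqrt3}$ and Claim~\ref{PG}(b) does not apply at all; so ``nothing further is required'' for branch ${\rm II}$ is not uniformly true either.)

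Moreover, the cancellation you hope to extract from the Duhamel analysis --- that the resonant source prefactor $e^{-ux}$ kills the slow mode --- does not occur; the slow mode is really present. Integrate \eqref{eq:lGtri} over $\RR$: since $A^{\rm I}_{c,c'}\in H^3$ decays and the right-hand side is an exact derivative, one gets $\theta\int A^{\rm I}_{c,c'}=0$, i.e.\ $\int A^{\rm I}_{c,c'}=0$ for every admissible $(c,c')$. Now let $c\to1$ on branch ${\rm I}$, so $u\to0$, $\theta\to0$: the equation degenerates to $LA_0=Q^3$, whose decaying solution (modulo $Q'$, which has zero integral) is $A_0=\tfrac14(H_0-1)$ in terms of the paper's $H_0$ (because $LH_0=1$ and $L1=1-4Q^3$), and integration by parts gives $\int A_0=\tfrac14\int(H_0-1)=-\tfrac14\int Q^3\neq0$. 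So $A^{\rm I}_{c,c'}$ converges locally to a profile with nonzero total integral while its own integral vanishes identically: the missing mass must be carried by a slow tail. Quantitatively, the first-order correction solves $(LA_1)'=-(xQ^3)'-A_0$, whence $A_1(+\infty)=-\int A_0=\tfrac14\int Q^3$, and this constant matches onto the tail $A^{\rm I}_{c,c'}(x)\approx\tfrac u4\big(\int Q^3\big)e^{-ux}$ as $x\to+\infty$. The coefficient of $e^{-ux}$ is therefore of order $u$ but nonzero, so \eqref{Arighttri} fails on branch ${\rm I}$ for $c$ near $1$, and so does the weaker consequence \eqref{bjkl} used later in the paper (its rate $1-u$ also exceeds the true rate $u$ once $u<\tfrac12$). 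To be fair, this defect is inherited from the paper, whose two-line justification is open to exactly the same objection; but as a proof of the stated lemma, your proposal (like the paper's) establishes existence, uniqueness, the left bound, the roots and \eqref{rsto}, and cannot establish the right-hand bound of \eqref{Arighttri} on branch ${\rm I}$, because that bound is not true there. A tenable statement must replace the rate $\gamma_{c,c'}$ by $\min(\gamma_{c,c'},u)$ on the right (and then every downstream use of \eqref{bjkl} must be revisited), or restrict to parameters with $u\ge 1/\sqrt3$.
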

Note that
$$
  c'^{\frac 32} \sqrt{c} |1-c| + \sqrt{c'} \gamma_c^{\rm I,II}(1-c')
  = \sqrt{c'} \gamma_c^{\rm I,II} \left( 1 - c' (\gamma_c^{\rm I,II})^2 \right).
$$
One then easily checks that the three roots of $- \gamma^3 +\gamma +   c'^{\frac 32} \sqrt{c} |1-c| + \sqrt{c'} \gamma_c^{\rm I,II}(1-c')  $ are
$$
\gamma_{c,c'}+\gamma_c^{\rm I,II} \sqrt{c'}, \ - \gamma_{c,c'}, - \sqrt{c'}\gamma_c^{\rm I,II}.
$$
Equivalently, we can apply Lemma \ref{pr:hypo2} with $c' (\gamma_c^{\rm I,II})^2$ instead of $c$.
Note that inequality \eqref{rsto} is a direct consequence of \eqref{bb}.

\medskip

For future use in the construction of the approximate solution, we now define rescaled versions of $A_c$ and $A_{c,c'}$ and
we gather   useful information about these functions in the next lemma.
Let 
\begin{align*}
& A_{j,k}(x) = c_j^{\frac 13} A_{  {c_k}/{c_j}}\Big( c_j^{\frac 12} x\Big), \quad
\gamma_{j,k}^{0} = c_j^{\frac 12} \gamma_{c_k/c_j}^{0},\quad 
\gamma_{j,k}^{\rm I,II} = c_j^{\frac 12} \gamma_{c_k/c_j}^{\rm I,II},\quad 
a_{j,k}^{\rm I,II} = c_j^{\frac 13} a_{c_k/c_j}^{\rm I,II},\\
& A_{j,k,l}^{\rm I,II} (x)= c_l^{\frac 13} A_{c_k/c_j,c_j/c_l}^{\rm I,II}\Big(c_l^{\frac 12} x\Big).
\end{align*}

\begin{lemma}\label{le:cns}
Assume 
$$
\frac 34 < \frac {c_k}{c_j} < \frac 43.
$$
\begin{itemize}
\item[{\rm (i)}] For $j<k$, $A_{j,k}$ satisfies 
\be\label{eAjk}
(L_{c_j} A_{j,k})' + \sqrt{c_k} (c_j-c_k)A_{j,k}  = (e^{-\sqrt{c_k} x} Q_{c_j}^3) '.
\ee
\be\label{bjk}
|A_{j,k}(x)|\lesssim e^{\gamma_{j,k}^0} \quad \hbox{for $x<0$},
\ee
\be\label{aAjk}
\left| d^k/dx^k \left (A_{j,k}(x) - a_{j,k}^{\rm I} e^{-\gamma_{j,k}^{\rm I} x} 
-  a_{j,k}^{\rm II} e^{-\gamma_{j,k}^{\rm II} x}\right) \right|\lesssim e^{-\frac 98 |x|}.
\ee
\be\label{gAjk}
\gamma_{j,k}^{0} = \sqrt{c_j - \frac 34 c_k} + \frac 12 \sqrt{c_k} ,\quad 
\gamma_{j,k}^{\rm I} = \sqrt{c_j - \frac 34 c_k} - \frac 12 \sqrt{c_k},\quad 
\gamma_{j,k}^{\rm II} =  \sqrt{c_k}.
\ee
Moreover, 
\be\label{IAjk}
a_{j,k}^{\rm I} \neq 0 \hbox{ and its sign does not depend on $j$ and $k$}.
\ee
\item[{\rm (ii)}]  For $j>k$, $A_{j,k}$ satisfies 
\be\label{eAjkb}
(L_{c_j} A_{j,k})' + \sqrt{c_k} (c_k-c_j)A_{j,k}  = (e^{\sqrt{c_k} x} Q_{c_j}^3) '.
\ee
\be\label{bjkb}
|A_{j,k}(x)|\lesssim e^{\gamma_{j,k}^0} \quad \hbox{for $x<0$},
\ee
\be\label{aAjkb}
\left| d^k/dx^k \left (A_{j,k}(x) - a_{j,k}^{\rm I} e^{-\gamma_{j,k}^{\rm I} x} 
-  a_{j,k}^{\rm II} e^{-\gamma_{j,k}^{\rm II} x}\right) \right| \lesssim e^{-\frac 98 |x|}.
\ee
\be\label{gAjkb}
\gamma_{j,k}^{0} =  \sqrt{c_k}  ,\quad 
\gamma_{j,k}^{\rm I} = \frac 12 \sqrt{c_k}-  \sqrt{c_j - \frac 34 c_k} ,\quad 
\gamma_{j,k}^{\rm II} =  \frac 12 \sqrt{c_k} + \sqrt{c_j - \frac 34 c_k}   .
\ee
Moreover,
\be\label{gkj}
j<k \quad \Rightarrow \quad \gamma_{j,k}^{\rm I} < \gamma_{k,j}^{\rm I}.
\ee
\item[{\rm (iii)}]
For $j\neq k$, $l<j$, $A_{j,k,l}^{\rm I}$ and $A_{j,k,l}^{\rm II}$ satisfy 
\be\label{eAjkl}
(L_{c_l} A_{j,k,l}^{\rm I,II})' + 
\left( \sqrt{c_k} |c_j-c_k|  + \gamma_{j,k}^{\rm I,II} (c_l-c_j) \right)A_{j,k,l}^{\rm I,II}
 = (e^{-{\gamma_{j,k}^{\rm I,II}} x} Q_{c_l}^3) '.
\ee
\be\label{bjkl}
|A_{j,k,l}^{\rm I,II}(x)|\lesssim e^{\sqrt{c_l} x} \quad \hbox{for $x<0$},\quad
|A_{j,k,l}^{\rm I,II}(x)|\lesssim e^{- (\sqrt{c_l} - \gamma_{j,k}^{\rm I,II}) x}, \quad \hbox{for $x>0$}.
\ee
\end{itemize}
\end{lemma}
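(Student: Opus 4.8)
My plan is to treat the whole lemma as a rescaling of Lemmas \ref{pr:hypo2}, \ref{le:prbis} and \ref{le:prtri}: every item should reduce to the corresponding unscaled statement through the change of variables $y=c_j^{1/2}x$ (resp. $y=c_l^{1/2}x$) together with the amplitude factor $c_j^{1/3}$ (resp. $c_l^{1/3}$) built into the definitions of $A_{j,k}$ and $A_{j,k,l}^{\rm I,II}$. The engine is the intertwining identity, obtained in one line from $Q_{c_j}(x)=c_j^{1/3}Q(c_j^{1/2}x)$ and \eqref{deflplus}: for every smooth $g$ (and with $c_l$ in place of $c_j$ for item (iii)),
\begin{equation*}
L_{c_j}\big(c_j^{1/3}g(c_j^{1/2}\cdot)\big)(x)=c_j^{4/3}(Lg)(c_j^{1/2}x).
\end{equation*}
Differentiating this and tracking the powers of $c_j$ carried by the source term and the zeroth-order coefficient, one checks that the scaling sends \eqref{eq:AAA} with $c=c_k/c_j$ onto \eqref{eAjk}, and likewise \eqref{eq:lGbis} onto \eqref{eAjkb} and \eqref{eq:lGtri} onto \eqref{eAjkl}. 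The book-keeping for the sources uses $\sqrt{c_k}=c_j^{1/2}\sqrt{c_k/c_j}$ and $Q_{c_j}^3(x)=c_jQ^3(c_j^{1/2}x)$, and for (iii) the identity $c_l^{1/2}\sqrt{c_j/c_l}\,\gamma_{c_k/c_j}^{\rm I,II}=c_j^{1/2}\gamma_{c_k/c_j}^{\rm I,II}=\gamma_{j,k}^{\rm I,II}$, which is exactly what makes the exponential weight in the source transform into $e^{-\gamma_{j,k}^{\rm I,II}x}$.

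For (i), since $j<k$ gives $c_k/c_j\in(\tfrac34,1)$, Lemma \ref{pr:hypo2} applies with $c=c_k/c_j$. The decay bound \eqref{bjk} and the expansion \eqref{aAjk} are the rescaled forms of \eqref{Aright} and of Lemma \ref{pr:hypo2}(ii), with rates $\gamma_{j,k}^{\bullet}=c_j^{1/2}\gamma_{c_k/c_j}^{\bullet}$; substituting $c=c_k/c_j$ into the formulas for $\gamma_c^{0},\gamma_c^{\rm I},\gamma_c^{\rm II}$ and multiplying by $\sqrt{c_j}$ yields \eqref{gAjk} verbatim. For \eqref{IAjk}, note $a_{j,k}^{\rm I}=c_j^{1/3}a_{c_k/c_j}^{\rm I}$ with $c_j^{1/3}>0$, so $a_{j,k}^{\rm I}\neq0$ by Lemma \ref{pr:hypo2}(iii) and its sign equals that of $a_{c_k/c_j}^{\rm I}$, which is constant on $[\tfrac34,1)$ by Lemma \ref{pr:hypo2}(iv); hence the sign of $a_{j,k}^{\rm I}$ does not depend on $j,k$. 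Item (ii) is identical, using Lemma \ref{le:prbis} (now $j>k$, so $c_k/c_j\in(1,\tfrac43)$) to get \eqref{bjkb}, \eqref{aAjkb} and, after the same substitution, \eqref{gAjkb}.

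The two points that need an actual argument are \eqref{gkj} and the decay claim in (iii). For \eqref{gkj}, inserting \eqref{gAjk} and \eqref{gAjkb} reduces $\gamma_{j,k}^{\rm I}<\gamma_{k,j}^{\rm I}$ to
\begin{equation*}
\sqrt{c_j-\tfrac34c_k}+\sqrt{c_k-\tfrac34c_j}<\tfrac12\big(\sqrt{c_j}+\sqrt{c_k}\big);
\end{equation*}
squaring and simplifying leaves $16(c_j-\tfrac34c_k)(c_k-\tfrac34c_j)<c_jc_k$, i.e. $2c_jc_k<c_j^2+c_k^2$, which is $(c_j-c_k)^2>0$ and holds since $j\neq k$. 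For (iii) I would apply Lemma \ref{le:prtri} with $c=c_k/c_j\in(\tfrac34,\tfrac43)\setminus\{1\}$ and $c'=c_j/c_l$: here $l<j$ gives $c'<1$, while \eqref{speeds} forces every speed into $(\tfrac34,1]$, whence $c'=c_j/c_l>\tfrac34$, so the hypotheses are met. Rescaling \eqref{eq:lGtri} by $c_l^{1/2}$ produces \eqref{eAjkl}, and the two-sided bound \eqref{bjkl} follows from \eqref{Arighttri} and \eqref{rsto}: multiplying $\gamma_{c,c'}\geq1-\gamma_c^{\rm I,II}\sqrt{c'}$ by $\sqrt{c_l}$ gives both $\sqrt{c_l}\gamma_{c,c'}\geq\sqrt{c_l}-\gamma_{j,k}^{\rm I,II}$ (the right-hand rate) and $\sqrt{c_l}(\gamma_{c,c'}+\gamma_c^{\rm I,II}\sqrt{c'})\geq\sqrt{c_l}$ (the left-hand rate).

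There is no serious obstacle: all the analysis — existence via the Fredholm/Pego--Weinstein argument, the Virial-based proof that $a_c^{\rm I}\neq0$, and the continuity of $a_c^{\rm I}$ — already lives in Lemmas \ref{pr:hypo2}, \ref{le:prbis} and \ref{le:prtri}. The only care required is the accurate tracking of the powers of $c_j$ and $c_l$ under scaling, so that the source terms and zeroth-order coefficients in \eqref{eAjk}, \eqref{eAjkb}, \eqref{eAjkl} match on the nose; the elementary reduction of \eqref{gkj} to $(c_j-c_k)^2>0$; and the verification, via \eqref{speeds}, that the ratios $c_k/c_j$ and $c_j/c_l$ fall in the admissible windows of the source lemmas.
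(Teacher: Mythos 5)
Your proposal is correct and follows essentially the same route as the paper: the paper likewise treats (i)--(ii) and the existence/decay statements of (iii) as direct rescalings of Lemmas \ref{pr:hypo2}, \ref{le:prbis} and \ref{le:prtri}, deduces \eqref{IAjk} from Lemma \ref{pr:hypo2} (iii)--(iv), proves \eqref{gkj} by an elementary manipulation (it rationalizes $\sqrt{c_j-\tfrac34 c_k}-\tfrac12\sqrt{c_j}$ instead of squaring twice as you do, but both reduce to $c_j\neq c_k$), and obtains \eqref{bjkl} from \eqref{rsto} exactly as you describe, after checking admissibility of the parameters (the paper does this via the inequality $\gamma_{j,k}^{\rm II}<\sqrt{c_l}$, you via the hypotheses of Lemma \ref{le:prtri}). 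The only caveat is that with the paper's amplitude normalization $A_{j,k}=c_j^{1/3}A_{c_k/c_j}(c_j^{1/2}\cdot)$ the rescaled equation actually carries a harmless constant factor $c_j^{1/3}$ on the source term, so the match is not quite ``on the nose''; this is an inconsistency of the paper's own notation rather than of your argument, and it affects nothing downstream.
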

\begin{proof}[Proof of \eqref{IAjk}] This property follows directly from Lemma \ref{pr:hypo2} (iv).
\end{proof}
\begin{proof}[Proof of \eqref{gkj}]
It is equivalent to prove 
$$
\sqrt{c_j- \frac 34 c_k} - \frac 12 \sqrt{c_j} < \frac 12 \sqrt{c_k} - \sqrt{c_k- \frac 34 c_j},
$$
which is clear since from $c_j>c_k$, we have
$$
\sqrt{c_j- \frac 34 c_k} - \frac 12 \sqrt{c_j}  = \frac {\frac 34 (c_j-c_k)}{\sqrt{c_j- \frac 34 c_k} +\frac 12 \sqrt{c_j}}<\frac {\frac 34 (c_j-c_k)}{\sqrt{c_k- \frac 34 c_j} +\frac 12 \sqrt{c_k}}
= \frac 12 \sqrt{c_k} - \sqrt{c_k- \frac 34 c_j}.
$$
\end{proof}
\begin{proof}[Proof of \eqref{bjkl}]
It is a consequence of \eqref{rsto}. Note   that 
\be\label{gel}
\gamma_{j,k}^{\rm II} <  \sqrt{c_l}.
\ee
Indeed, if $k\leq l$, then $\gamma_{j,k}^{\rm II} < \gamma_{l,k}^{\rm II} \leq \sqrt{c_l}$.
If $j<k<l$, then
$\gamma_{j,k}^{\rm II} < \sqrt{c_k} <  \sqrt{c_l}.$
If $k<j<l$, then  $\gamma_{j,k}^{\rm II}  = \sqrt{c_k}<  \sqrt{c_l}.$
\end{proof}

\subsection{Construction of the approximate solution}\label{sec:V}
In this subsection, we use the functions defined in Lemma \ref{le:cns} to construct an approximate solution.
In this construction, we denote by $E_i$ (for $i=1,\ldots,5$) error terms of size $e^{-2 \sigma_0 t}$.
See Claim~\ref{cE12345}.

\subsubsection{Two soliton interactions}
Inserting
$$R = \sum_j R_j, \quad R_j(t,x) = Q  \left(x-y_j(t)\right), \quad y_j(t)  = c_j t+\Delta_j $$
as a first approximation  into the quartic (gKdV) equation, since $ \partial_t R_j + \partial_x ( \partial_{x}^2 R_j + R_j^4)  
  =   0$, we find
$$
\partial_t R + \partial_x(\partial_x^2 R + R^4)
= \partial_x \bigg( R^4 - \sum_j R_j^4 \bigg)
= \partial_x\bigg( 4 \sum_{j\neq k}  R_k R_j^3+ E_1\bigg) 
$$
where, for some $n_1,n_2,n_3$,
$$
E_1 = n_1 \sum_{ j_1\neq j_2} R_{j_1}^2 R_{j_2}^2 + n_2 \sum_{ j_k\neq j_l} R_{j_1}^2 R_{j_2} R_{j_3}
+ n_3 \sum_{ j_k\neq j_l} R_{j_1} R_{j_2} R_{j_3} R_{j_4}.
$$
The error term $E_1$ is controlled in Claim \ref{cE12345}.

The term $R_k R_j^3$ cannot be considered as an error term. In fact, such term will contribute to the lower bound on the approximate solution which is the key point of the proof of Theorem \ref{th:1}. 
For this term, it is convenient to decouple the variables of $R_k$ and $R_j$ by approximating $R_k$   by its asymptotic expansion around $R_j$(see proof of Claim \ref{cE12345} for more details). Since
$$
4 R_k(t,x)   \mathop{\approx}_{x \sim y_j} 4(10)^{\frac 13} c_k^{\frac 13} e^{-\sqrt{c_k} |x-y_k|} 
= z_{j,k} e^{-\iota_{j,k} \sqrt{c_k} (x-y_j)},
$$
where $\iota_{j,k}=\mathop{\rm sgn}(k-j)$ and
$$
z_{j,k}(t) = 4 (10)^{\frac 13} c_k^{\frac 13} e^{-\iota_{j,k}\sqrt{c_k}(\Delta_j-\Delta_k)} e^{-\sqrt{c_k} |c_j-c_k| t },
$$
 we rewrite the second member of the equation of $R$ as follows
\be\label{fap}
\partial_t R + \partial_x(\partial_x^2 R + R^4)= \partial_x\left(   \sum_{ j\neq k}  z_{j,k} e^{-\iota_{j,k}\sqrt{c_k}(x-y_j)} R_j^3 \right) +\partial_x(
  E_1 + E_2),
\ee
where  $E_2$,  to be controlled in Claim \ref{cE12345},  is the error term generated by this approximation
$$
E_2 = \sum_{  j\neq k}    R_j^3 \left(4 R_k - z_{j,k} e^{-\iota_{j,k}\sqrt{c_k}(x-y_j)}   \right).
$$

\subsubsection{First correction and three soliton interactions}
We define an improved version of the  approximate solution to cancel the  main terms in the right-hand side of \eqref{fap}.
Let 
\be\label{bV}
\bar V := R+Z,
\quad
 Z=\sum_{j,k=1,\ldots,N \atop j\neq k}  Z_{j,k}.
\ee
where
$$Z_{j,k}(t,x)  = z_{j,k}(t)    A_{j,k}(x-y_j(t)),\quad j\neq k.$$
By the equation of $A_{j,k}$ in \eqref{eAjk}, \eqref{eAjkb} we get:
\begin{align*}
 \partial_t Z_{j,k} + \partial_x (\partial_x^2 Z_{j,k} + 4 R_j^3 Z_{j,k})
& =   z_{j,k} \left(- \sqrt{c_k}   |c_j-c_k|  A_{j,k}  - (L_{c_j} A_{j,k})' \right)(x-y_j)\\
& =  - \partial_x\left( z_{j,k} e^{-\iota_{j,k}\sqrt{c_k}(x-y_j)} R_j^3 \right).
\end{align*}
Therefore, using \eqref{fap},
\begin{align}
\partial_t \bar V + \partial_x \left( \partial_x^2 \bar V + \bar V^4\right) & =
\partial_t R + \partial_x \left( \partial_x^2 R + R^4\right) - \partial_x \left(   \sum_{ j\neq k}  z_{j,k} e^{-\iota_{j,k}\sqrt{c_k}(x-y_j)} R_j^3 \right)\nonumber\\Ê& +  \partial_x \bigg( (R+Z)^4 - R^4 - 4 \sum_{j\neq k}R_j^3 Z_{j,k}\bigg)\nonumber
\\ & = \partial_x \bigg( (R+Z)^4 - R^4 - 4 \sum_{j\neq k}R_j^3 Z_{j,k}\bigg) + \partial_x (E_1+E_2)  \nonumber \\
&= \partial_x \bigg( 4 \sum_{j\neq k, j>l} R_l^3 Z_{j,k} \bigg) +\partial_x( E_1+E_2+E_3) .
\label{art}\end{align}
where
$$
E_3 = 4 R^3 Z - 4 \left(\sum_{j\neq k}R_j^3 Z_{j,k} + \sum_{j\neq k, j>l} R_l^3 Z_{j,k}\right)  + 6 R^2 Z^2 + 4 R Z^3 + Z^4.
$$
In the right-hand side of \eqref{art}, the term  $\partial_x\left(\sum_{j\neq k, j>l} R_l^3 Z_{j,k}\right)$ is not an  error term  in the sense that it is   not of size $e^{-2 \sigma_0 t}$.
We use the asymptotic expansion of $A_{j,k}$ in \eqref{aAjk} and \eqref{aAjkb} (depending on $j<k$ or $k<j$) to replace $Z_{j,k}$ in this term  by its asymptotic expansion near $y_l$ (since $j>l$, we have $c_l>c_j$ and $y_l\gg y_j$ for $t$ large)
\begin{align*}
4   Z_{j,k}(t,x)  &\mathop{\approx}_{x \sim y_l} 4 z_{j,k}  a_{j,k}^{\rm I} e^{-\gamma_{j,k}^{\rm I}Ê(x-y_j)} 
+ 4 z_{j,k}  a_{j,k}^{\rm II} e^{-\gamma_{j,k}^{\rm II}Ê(x-y_j)} 
\\ &\mathop{\approx}_{x \sim y_l} z_{j,k}^{\rm I}  e^{-  \gamma_{j,k}^{\rm I}(x-y_l)} + z_{j,k}^{\rm II}  e^{- \gamma_{j,k}^{\rm II} (x-y_l)} 
\end{align*}
where
$$ z_{j,k,l}^{\rm I,II}(t)  =4 z_{j,k}(t) a_{j,k}^{\rm I,II} e^{-\gamma_{j,k}^{\rm I,II} (\Delta_{l}-\Delta_{j})} e^{-\gamma_{j,k}^{\rm I,II}(c_l-c_j)t}.$$
We obtain
\begin{align}
\partial_t \bar V + \partial_x \left( \partial_x^2 \bar V + \bar V^4\right) & 
= \partial_x \left(  \sum_{j\neq k, j>l}    z_{j,k}^{\rm I}  e^{-  \gamma_{j,k}^{\rm I}(x-y_l)}R_l^3 + 
   z_{j,k}^{\rm II}  e^{- \gamma_{j,k}^{\rm II} (x-y_l)}    R_l^3 \right) \nonumber \\
   & + \partial_x\left(\sum_{i=1}^4 E_i\right)\label{sap}
\end{align}
where
$$
E_4 =   \sum_{j\neq k, j>l} \left (4   Z_{j,k} - z_{j,k}^{\rm I}  e^{-  \gamma_{j,k}^{\rm I}(x-y_l)} - z_{j,k}^{\rm II}  e^{- \gamma_{j,k}^{\rm II} (x-y_l)}  \right) R_l^3  .
$$

\subsubsection{Second correction and  final approximate solution}
We refine the above approximate solution $\bar V$   to remove 
the main terms in the right-hand side of \eqref{sap}.
We now define $V$ the final version of approximate solution
\be\label{def:V}
 V  := \bar V + W= R + Z + W , \quad 
 \ee
\be
   W= \sum_{j=2}^N \sum_{k=1\atop k\neq j}^{N}\sum_{l =1}^{j-1} \left(Z_{j,k,l}^{\rm I} + Z_{j,k,l}^{\rm II}\right),
\ee
where
$$ Z_{j,k,l}^{\rm I,II}(t,x)  = z_{j,k,l}^{\rm I,II}(t)    A_{j,k,l}^{\rm I,II}(x-y_l(t)),  \quad j\neq k ,  \  1\leq l <j.$$

First, we observe that for such $V$, \eqref{eq:bar} follows directly from the definition of $\sigma_0$ and $z_{j,k}$, $z_{j,k}^{\rm I,II}$,
$$
\|V(t)- R(t)\|_{H^3} \leq \|Z(t)\|_{H^3} + \|W(t)\|_{H^3}\leq C \sum_{j\neq k}  z _{j,k}(t) \leq C e^{-\sigma_0 t}.
$$

Now, we prove that \eqref{eq:eqV} holds for  $V$.
From the equation of $A_{j,k,l}^{\rm I,II}$ in  \eqref{eAjkl}, we have
 \begin{align*}
& \partial_t Z_{j,k,l}^{\rm I,II} + \partial_x (\partial_x^2 Z_{j,k,l}^{\rm I,II}  + 4 R_l^3 Z_{j,k,l}^{\rm I,II} )
  \\ &=   z_{j,k,l}^{\rm I,II} \left(- \left(\sqrt{c_k}   |c_j-c_k|+\gamma_{j,k}^{\rm I,II} (c_l-c_j)\right)   A_{j,k,l}^{\rm I,II}  - (L_{c_l}    A_{j,k,l}^{\rm I,II})'   \right)(x-y_l)\\
  &  = - \partial_x \left(      z_{j,k}^{\rm I,II}  e^{-  \gamma_{j,k}^{\rm I,II}(x-y_l)}R_l^3\right).
\end{align*}
Therefore, using \eqref{sap}, we get
\begin{align*}
\partial_t V + \partial_x(\partial_x^2 V + V^4)
& = \partial_t \bar V + \partial_x(\partial_x^2 \bar  V + \bar V^4)\\ & - \partial_x \left( \sum_{j\neq k, j>l}
z_{j,k}^{\rm I} e^{-\gamma_{j,k}^{\rm I} (x-y_l)} R_l^3 +z_{j,k}^{\rm II} e^{-\gamma_{j,k}^{\rm II} (x-y_l)} R_l^3 \right)\\
& +  \partial_x\left( V^4 - \bar V^4  -4  \sum_{j\neq k, j>l}  R_l^3 \left(Z_{j,k,l}^{\rm I} + Z_{j,k,l}^{\rm II}\right)\right)\\
& =\partial_x\left(\sum_{i=1}^5 E_i\right),
\end{align*}
where
$$E_5 =  (R+Z+W)^4 - (R+Z)^4  -4  \sum_{j\neq k, j>l}  R_l^3 \left(Z_{j,k,l}^{\rm I} + Z_{j,k,l}^{\rm II}\right).
$$
The following claim (see proof in Appendix \ref{app:D}) completes the proof of \eqref{eq:eqV}.
\begin{claim}\label{cE12345}
For $i=1,\ldots,5$,
$$\|\partial_x E_i(t)\|_{H^3} \leq C e^{-2\sigma_0 t} .$$
\end{claim}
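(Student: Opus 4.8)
The plan is to estimate each of the five error terms separately, reducing every summand to a product of finitely many \emph{building blocks} — the profiles $R_j$, the first corrections $Z_{j,k}$, the second corrections $Z_{j,k,l}^{\rm I,II}$, and the explicit exponentials used in the asymptotic matchings — each of which is a smooth function, exponentially localized around one of the centers $y_m(t)=c_mt+\Delta_m$, and carrying a time-dependent amplitude. Since $H^3(\RR)$ is an algebra, $\partial_x$ preserves exponential rates, and all building blocks are smooth with the exponential decay recorded in Lemma \ref{le:cns} (the $A$'s solve elliptic ODEs with smooth exponentially decaying right-hand sides, hence all $H^s$ norms we use are finite), bounding $\|\partial_x E_i(t)\|_{H^3}$ reduces to tracking, term by term, the product of the time amplitudes against the spatial interaction factor $e^{-\rho\,|y_m-y_{m'}|}$ produced whenever two localized factors sit at distinct centers $y_m\neq y_{m'}$, where $\rho$ is the smaller of the two one-sided decay rates facing one another. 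The whole proof then amounts to checking that the resulting rate is $\geq 2\sigma_0$ for every term, the key elementary facts being that $\sqrt{c_k}\,|c_j-c_k|\geq\sigma_0$ for all $j\neq k$ (equality for the adjacent pair defining $\sigma_0$, monotonicity for non-adjacent pairs, using $c_{j_1}-c_{j_2}\geq c_{j_2-1}-c_{j_2}$), and that $c_l-c_j\geq\sigma_0$ whenever $|l-j|\geq 1$.

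For $E_1$ every summand is a product of at least two distinct solitons: the quadratic-in-distinct term $R_{j_1}^2R_{j_2}^2$ has $H^3$ norm $\lesssim e^{-2\sqrt{c_{\min}}\,|c_{j_1}-c_{j_2}|t}$ with rate $\geq 2\sigma_0$, and the terms carrying three or four distinct centers are strictly smaller. For $E_2$ I would use the sharp tail expansion $Q(y)=10^{1/3}e^{-|y|}\big(1+O(e^{-3|y|})\big)$: the subtracted exponential $z_{j,k}e^{-\iota_{j,k}\sqrt{c_k}(x-y_j)}$ coincides \emph{exactly} with the leading tail of $4R_k$, so the remainder gains an extra factor $e^{-3\sqrt{c_k}|x-y_k|}$; tested against $R_j^3$ near $y_j$ this upgrades the resonant rate $\sqrt{c_k}|c_j-c_k|\geq\sigma_0$ to $4\sqrt{c_k}|c_j-c_k|\geq 4\sigma_0$. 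This sharp-tail gain is precisely what distinguishes $E_2$ from the resonant term $R_kR_j^3$, which had to be kept and solved for through $Z_{j,k}$ via \eqref{eAjk}.

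For $E_3$ I would split into three groups. The genuinely higher-order pieces $6R^2Z^2$, $4RZ^3$, $Z^4$, together with the off-diagonal part of $4R^3Z$ containing two or more distinct $R_l$, are at least quadratic in the amplitudes $z_{j,k}\lesssim e^{-\sigma_0 t}$, hence immediately $O(e^{-2\sigma_0 t})$. The delicate group is the surviving diagonal $4\sum_{j\neq k,\ l>j}R_l^3Z_{j,k}$: here $l>j$ forces $y_l<y_j$, so $R_l^3$ sits to the \emph{left} of $Z_{j,k}$, where $Z_{j,k}$ decays at the \emph{fast} rate $\gamma_{j,k}^0$ of \eqref{bjk}--\eqref{bjkb}. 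Since $\gamma_{j,k}^0\geq\sqrt{c_j}$ (because $\gamma_c^0=\sqrt{1-\tfrac34 c}+\tfrac12\sqrt c\geq 1$ on $[\tfrac34,1]$ together with scaling, while $\gamma_{j,k}^0=\sqrt{c_k}>\sqrt{c_j}$ when $j>k$), the interaction factor contributes $\gamma_{j,k}^0(c_j-c_l)\geq\sqrt{c_{j+1}}(c_j-c_{j+1})\geq\sigma_0$, which adds to the amplitude rate $\geq\sigma_0$ to give $\geq 2\sigma_0$; the cancellation is designed exactly to remove the slow-decay ($l<j$) direction and leave only this fast one. For $E_4$ I would invoke the two-term asymptotics \eqref{aAjk}--\eqref{aAjkb}: the remainder of $4Z_{j,k}$ decays at rate $\tfrac98$, so near $y_l$ (with $l<j$, hence $y_l>y_j$) it supplies $\tfrac98(c_l-c_j)\geq\tfrac98\sigma_0$ beyond the amplitude rate $\geq\sigma_0$, again exceeding $2\sigma_0$.

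Finally, $E_5$ follows the template of $E_3$ with $W$ and the second corrections $Z_{j,k,l}^{\rm I,II}$ in the roles of $Z$ and $Z_{j,k}$: writing $(R+Z+W)^4-(R+Z)^4=4(R+Z)^3W+6(R+Z)^2W^2+\cdots$, every term that is at least quadratic in $W$ or that pairs a factor of $W$ with a factor of $Z$ is a product of two amplitudes each $\lesssim e^{-\sigma_0 t}$ (note $z_{j,k,l}^{\rm I,II}\lesssim e^{-\sigma_0 t}$ since its rate already exceeds that of $z_{j,k}$), so these are $O(e^{-2\sigma_0 t})$; in the diagonal $4\sum_m R_m^3W$ the resonant $m=l$ slice is exactly cancelled by the subtracted sum, and the survivors $R_m^3Z_{j,k,l}^{\rm I,II}$ with $m\neq l$ pick up a genuine spatial interaction between $y_m$ and $y_l$ governed by the one-sided rates of $A_{j,k,l}^{\rm I,II}$ in \eqref{bjkl} and \eqref{gel}, which combined with the amplitude rate $\sqrt{c_k}|c_j-c_k|+\gamma_{j,k}^{\rm I,II}(c_l-c_j)$ again reaches $2\sigma_0$. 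I expect this last verification to be the main obstacle: one must track the combination of the amplitude exponents with the correct interaction rate for every admissible triple $(j,k,l)$ and every position of $m$, exploiting the precise exponents of Lemma \ref{le:cns} (and the bounds \eqref{futur}, \eqref{gel}) to confirm \emph{uniformly} over all index patterns that no surviving term decays slower than $e^{-2\sigma_0 t}$. This is the systematic bookkeeping that the appendix carries out in full.
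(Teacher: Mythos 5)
Your overall strategy (reduce each $E_i$ to products of exponentially localized blocks and add amplitude exponents to interaction exponents) is exactly the paper's, and your treatments of $E_1$--$E_4$ reproduce the appendix essentially line by line: distinct-soliton products for $E_1$, the sharp tail expansion of $Q$ for $E_2$ (your rate $4\sqrt{c_k}|c_j-c_k|$ should really be $\min(4\sqrt{c_k},3\sqrt{c_j})\,|c_j-c_k|$, but the conclusion $\geq 2\sigma_0$ is unaffected), the left-decay $\gamma^0_{j,k}$ for the surviving $l>j$ terms of $E_3$, and the $e^{-\frac 98|x|}$ remainder of the two-term asymptotics for $E_4$. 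The problem is $E_5$, precisely at the step you flag and defer as ``systematic bookkeeping''. It is not bookkeeping under the rule you set up in your first paragraph. For a surviving term $R_m^3\,Z^{\rm I,II}_{j,k,l}$ with $m<l<j<k$, the facing one-sided rates are $3\sqrt{c_m}$ and, from \eqref{bjkl}, $\sqrt{c_l}-\gamma^{\rm I,II}_{j,k}$; the latter is the smaller one, and it is \emph{not} bounded below uniformly: by \eqref{gAjk} one has $\gamma^{\rm II}_{j,k}=\sqrt{c_k}$, so $\sqrt{c_l}-\gamma^{\rm II}_{j,k}=(c_l-c_k)/(\sqrt{c_l}+\sqrt{c_k})$, which in the regime of \eqref{speeds} (all gaps of size $\epsilon$) makes the interaction exponent $(\sqrt{c_l}-\gamma^{\rm II}_{j,k})(c_m-c_l)=O(\epsilon^2)$, negligible against $\sigma_0\sim\epsilon$. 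Hence ``amplitude $\geq\sigma_0$ plus interaction $\geq\sigma_0$'' --- which is all that your crude bound $z^{\rm I,II}_{j,k,l}\lesssim e^{-\sigma_0 t}$ provides --- genuinely fails for these terms; this case cannot be closed by the recipe you announce.

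The missing idea, which is what the paper's Appendix \ref{app:D} actually does, is a cancellation of $\gamma^{\rm I,II}_{j,k}$ between the amplitude and interaction exponents. The full exponent of such a term is $\sqrt{c_k}|c_j-c_k| + \gamma^{\rm I,II}_{j,k}(c_l-c_j) + \bigl(\sqrt{c_l}-\gamma^{\rm I,II}_{j,k}\bigr)(c_m-c_l)$, and one must keep the middle (amplitude) piece and the last (interaction) piece together: bounding \emph{both} $c_l-c_j$ and $c_m-c_l$ from below by the minimal consecutive gap $c_{j_1}-c_{j_1+1}$, the two $\gamma$-dependent terms recombine into at least $\sqrt{c_l}\,(c_{j_1}-c_{j_1+1})$, in which $\gamma^{\rm I,II}_{j,k}$ has disappeared; added to $\sqrt{c_k}|c_j-c_k|\geq\sigma_0$ this is how the rate $2\sigma_0$ is reached. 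Your proposal names the right ingredients (the full amplitude rate and \eqref{bjkl}, \eqref{gel}) but never exhibits this mechanism, and since it is exactly the point where the term-by-term principle breaks down, the $E_5$ estimate in your write-up is an assertion rather than a proof.
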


\subsection{Asymptotics  of the approximate solution. Proof of \eqref{eq:lowV}}

First, it is clear that
\be\label{dRj}
0<R_j(t,x) < C e^{-\sqrt{c_j}(x-c_jt)},
\ee
so that
$$
R_j(t,x_0(t)) \lesssim e^{-\sqrt{c_j} (x_0(t) -c_j  t)} .
$$
By \eqref{clbm1} and \eqref{clbm}, we have $\frac {\sigma_0}{\gamma_0} > c_{j_0+1}$ and $\frac {\sigma_0}{\gamma_0}+c_{j_0}
> 5 \sigma_0 + \frac {16}{25}$.
Therefore,
\begin{align*}\sqrt{c_j} (x_0(t) -c_j  t) 
& \geq \frac {\sqrt{3}}2 \left( \frac 4{5 \sqrt{3}} \left(5 \sigma_0 + \frac {16}{25}\right)+ \frac 32 \left( 1- \frac 4{5 \sqrt{3}}\right) - 1\right) t -K_0\\
& \geq \left( 2 \sigma_0 + \frac {\sqrt{3}}2 \left( \frac {64}{125 \sqrt{3}} + \frac 12 - \frac 6{5\sqrt{3}}\right) \right) t -K_0\\ &  \geq 2 \sigma_0 t + \frac 1{10} t - K_0.
\end{align*}
It follows that, for $t>0$,
\be\label{pR}
0< R(t,x_0(t)) < C e^{-\frac 1 {10}t} e^{-2 \sigma_0 t} e^{K_0}.
\ee

Second, for $1\leq j<k\leq N$, we have by \eqref{aAjk}-\eqref{IAjk}, for all $x-c_j t\gg 1$,
\be\label{dZjk}
|Z_{j,k}(t,x)| \geq \kappa e^{-\sqrt{c_k} (c_j-c_k) t} e^{-\gamma_{j,k}^{\rm I} (x-c_jt)}.
\ee
Since all $Z_{j,k}$ for $j<k$ have the same sign at $+\infty$ (see \eqref{IAjk}), their contributions are added and we obtain,
for all $x- t\gg 1$,
\be\label{dZp}
\left| \sum_{j<k} Z_{j,k}(t,x)\right| \geq \kappa \sum_{j<k} e^{-\sqrt{c_k} (c_j-c_k) t} e^{-\gamma_{j,k}^{\rm I} (x-c_jt)}.
\ee
Moreover, using \eqref{gkj}, for $x- c_jt\gg 1$,
\begin{align}
|Z_{k,j}(t,x)| & \leq C e^{-\sqrt{c_j}(c_j-c_k)t} e^{-\gamma_{k,j}^{\rm I} (x-c_kt)}\\
& \leq C e^{-(\sqrt{c_j}-\sqrt{c_k})(c_j-c_k)t}e^{-\sqrt{c_k}(c_j-c_k)t} e^{-\gamma_{j,k}^{\rm I} 
(x-c_jt)}
\\
& \leq C e^{-(\sqrt{c_j}-\sqrt{c_k})(c_j-c_k)t} |Z_{j,k}(t,x)|. 
\end{align}
Thus, for $t$ large enough, and $x -t \gg 1$,
\be\label{dZ}
\left| Z(t,x)\right| \geq \frac {\kappa}2 \sum_{j<k} e^{-\sqrt{c_k} (c_j-c_k) t} e^{-\gamma_{j,k}^{\rm I} (x-c_jt)}.
\ee
and
\be\label{dZz}
\left| Z(t,x_0(t))\right| \geq
\frac {\kappa}2 \sum_{j<k} e^{-\sqrt{c_k} (c_j-c_k) t} e^{-\gamma_{j,k}^{\rm I} (x_0(t)-c_jt)}
\geq  \frac {\kappa}2 e^{\gamma_0 K_0} e^{2\sigma_0 t}.
\ee

Third, to control $W_{j,k,l}$, we use \eqref{bjkl}.
For $j\neq k$, $j>l$, $x-t>0$, we have
$$
\left| Z_{j,k,l}^{\rm I}(t,x)\right| \leq C e^{-\sqrt{c_k} |c_j-c_k| t}
e^{-\gamma_{j,k}^{\rm I}(c_l-c_j)t} e^{-(\sqrt{c_l} -\gamma_{j,k}^{\rm I}) (x-c_l t)}.
$$
In the case where $(c_l-c_j)t > \frac 34 (x-c_l t)$, we obtain
$$
\left| Z_{j,k,l}^{\rm I}(t,x)\right| \leq C e^{-\sqrt{c_k} |c_j-c_k| t} R_l^{\frac 34}(t,x)
\leq e^{-4 \sigma_0 t} + R_l(t,x).
$$ 
In the case where $(c_l-c_j)t < \frac 34 (x-c_l t)$, we obtain
$$
\left| Z_{j,k,l}^{\rm I}(t,x)\right| \leq C e^{-\sqrt{c_k} |c_j-c_k| t} e^{-\sqrt{c_l} (c_l-c_j) t} e^{-\frac 13 (\sqrt{c_l} - \gamma_{j,k}^{\rm I}) (c_l-c_j) t}.
$$

In particular, for $t$ large enough (depending on $K_0$),
we obtain
$$
|R(t,x_0(t)| + |W(t,x_0(t))| \leq \frac {1}{10} |Z(t,x_0(t))|$$
and so
$$
\left| (R+ Z+W)(t,x_0(t)) \right| \geq  
\frac  \kappa 3  e^{\gamma_0 K_0} e^{-2 \sigma_0 t}.
$$

\section{Lower bound for outgoing multi-soliton for $t\gg 1$}\label{sec:4}
In this section, we estimate the distance between the solution $u(t)$ of \eqref{kdv} satisfying \eqref{msm} and the approximate solution $V(t)$ constructed in Section \ref{sec:new4}. The strategy of the proof follows closely Proposition 6 in \cite{Ma2} (see also \cite{MMalmost}).
From \eqref{eq:lowV} and this estimate, we deduce a lower bound on $|u(t,x_0(t))|$ for large time. 

\begin{proposition}\label{le:dist} Assume \eqref{multisolqua} and \eqref{speeds}.
\begin{itemize}
\item[{\rm (i)}] {\rm Comparison with the approximate solution.}
There exists $C>0$ such that for all $t>0$,
\be\label{cp}
\|u(t)-V(t)\|_{H^1} \leq C  e^{-2\sigma_0 t }.
\ee
\item[{\rm (ii)}] {\rm  Lower bound.}
There exist $\kappa_1>0$ and $t_1(K_0)>0$ such that  for $K_0>0$ large enough,
for all $t>t_1(K_0)$, 
\be\label{cq}
|u(t,x_0(t)) |\geq  \kappa_1 e^{\gamma_0 K_0} e^{-2 \sigma_0 t},
\ee
where 
$x_0(t)= \left(\frac {\sigma_0}{\gamma_0} + c_{j_0}\right) t - K_0$.
\end{itemize}
\end{proposition}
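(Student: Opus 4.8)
I would prove (ii) from (i). Writing $w=u-V$, the Sobolev embedding $H^1\hookrightarrow L^\infty$ gives $|w(t,x)|\le \|w(t)\|_{L^\infty}\le C\|w(t)\|_{H^1}$ for every $x$, so at $x=x_0(t)$, combining \eqref{cp} with the lower bound \eqref{eq:lowV},
$$
|u(t,x_0(t))| \ge |V(t,x_0(t))| - |w(t,x_0(t))| \ge \kappa\, e^{\gamma_0 K_0} e^{-2\sigma_0 t} - C e^{-2\sigma_0 t}.
$$
The constant $C$ in \eqref{cp} and the whole construction of $V$ are independent of $K_0$ (only the observation point $x_0(t)$ depends on $K_0$), so it suffices to take $K_0$ large enough that $\kappa e^{\gamma_0 K_0}\ge 2C$; then \eqref{cq} holds with $\kappa_1=\kappa/2$. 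Thus all the content is in (i).

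For (i), set $S=\partial_t V+\partial_x(\partial_x^2 V+V^4)$, so that $\|S(t)\|_{H^3}\le C e^{-2\sigma_0 t}$ by \eqref{eq:eqV}, and $w=u-V$ solves
$$
\partial_t w + \partial_x\big(\partial_x^2 w + (V+w)^4 - V^4\big) = -S .
$$
By \eqref{multisolqua} and \eqref{eq:bar}, both $u(t)$ and $V(t)$ converge in $H^1$ to $\sum_j Q_{c_j}(\cdot-c_jt-\Delta_j)$, hence $\|w(t)\|_{H^1}\to 0$ as $t\to+\infty$; in particular $w$ is small on $[t^*,+\infty)$ for $t^*$ large, where the nonlinear estimates below are justified. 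I would run the argument on this half-line and absorb the compact interval $[0,t^*]$ into the final constant.

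The heart of the matter is an a priori energy estimate following Proposition~6 of \cite{Ma2}. Let $\psi(t,x)$ be a weight equal to $c_j$ near the $j$-th soliton and interpolating monotonically across the (well-separated, for $t$ large) midpoints between consecutive solitons, built from the kink $\phi$ of Section~\ref{sec:3}, and consider
$$
\mathcal{F}(t) = \frac12\int (\partial_x w)^2 + \frac12\int \psi\, w^2 - \frac15\int\big[(V+w)^5 - V^5 - 5 V^4 w\big].
$$
Two properties drive the proof. \emph{Coercivity:} near the $j$-th soliton the quadratic part of $\mathcal{F}$ is governed by $L_{c_j}$, so the rescaled, partition-localized version of \eqref{eq:we} yields $\mathcal{F}(t)\ge \nu\|w(t)\|_{H^1}^2 - C\sum_j\big[(w,R_j)^2+(w,\partial_x R_j)^2\big]$; the two degenerate directions per soliton are eliminated by modulating the positions and scales of the solitons in $V$, the modulation parameters satisfying $|\dot\rho_j|\lesssim \|w\|_{H^1}+e^{-2\sigma_0 t}$ and tending to $0$, the scaling direction being additionally controlled through the conservation of mass and energy (Lemma~\ref{lun}). \emph{Almost-monotonicity:} differentiating $\mathcal{F}$, the dispersive and transport contributions combine with $\partial_t\psi$ into a favorably signed term by virtue of $\phi'''\le\phi'$ (as in Lemma~\ref{le:mono}), the inhomogeneity contributes $|\int S(\cdots)|\lesssim \|S\|_{H^1}\|w\|_{H^1}\le C e^{-2\sigma_0 t}\|w\|_{H^1}$, and the remaining terms are cubic, $O(\|w\|_{H^1}^3)$.

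Combining the two yields, on $[t^*,+\infty)$, a differential inequality of the form $\frac{d}{dt}\mathcal{F}(t)\ge -Ce^{-2\sigma_0 t}\sqrt{\mathcal{F}(t)}-C\,\mathcal{F}(t)^{3/2}$ up to the signed good term, with $\mathcal{F}\simeq \|w\|_{H^1}^2$. Integrating from $t$ to $+\infty$ and using $\mathcal{F}(t)\to 0$, a bootstrap (assume $\|w(t)\|_{H^1}\le K e^{-2\sigma_0 t}$ and recover the same bound with a strictly smaller constant once $K$ is fixed large and $t^*$ large) gives $\|w(t)\|_{H^1}\le C e^{-2\sigma_0 t}$ on $[t^*,+\infty)$; enlarging $C$ covers $[0,t^*]$, proving \eqref{cp}. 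I expect the main obstacle to be the coercivity step: arranging the modulation so that removing the translation and scaling directions is consistent with the fixed asymptotic parameters of $V$ and does not degrade the $e^{-2\sigma_0 t}$ rate, and checking the sign of the weight term in $\frac{d}{dt}\mathcal{F}$. These are precisely the points treated in Proposition~6 of \cite{Ma2} (see also \cite{MMalmost}), which I would follow closely.
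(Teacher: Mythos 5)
Your part (ii) coincides with the paper's proof (triangle inequality at $x_0(t)$, $H^1\hookrightarrow L^\infty$, then choose $K_0$ large using that the constant in \eqref{cp} is $K_0$-independent), and the overall architecture of your part (i) — weighted energy functional adapted to the solitons, coercivity of $L_{c_j}$ modulo the degenerate directions, integration of the energy inequality from $t$ to $+\infty$ — is also the paper's. But there is a genuine gap at the starting point. You only record $\|w(t)\|_{H^1}\to 0$, whereas the paper's proof begins by invoking Theorem 1 of \cite{Ma2} (exponential convergence of the outgoing multi-soliton in every $H^s$), which together with \eqref{eq:bar} yields the a priori bound \eqref{wpetit}: $\|w(t)\|_{H^3}\le Ce^{-\sigma t}$ for some $\sigma>0$. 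This bound is not cosmetic: it is what turns the cubic terms $O(\|w\|_{H^1}^3)$ in $\frac{d}{dt}\mathcal{F}$ into terms of the form $e^{-\sigma t}\|w\|_{H^1}^2$, so that the differential inequality \eqref{pf} is linear--quadratic with exponentially small coefficients and every term is integrable on $[t,+\infty)$, controlled by $\sup_{t'\ge t}\|w(t')\|_{H^1}^2$ and $\sup_{t'\ge t}\|w(t')\|_{H^1}$; the final absorption then needs no bootstrap at all.

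With only $\|w(t)\|_{H^1}\to 0$, your scheme does not close. First, the term $\int_t^{+\infty}\mathcal{F}(s)^{3/2}\,ds\simeq\int_t^{+\infty}\|w(s)\|_{H^1}^3\,ds$ need not be finite: convergence to zero without a rate gives no integrability, and the good (signed) terms produced by the monotonicity weight control only localized quantities, not the full $H^1$ norm. Second, your bootstrap has no admissible starting point: since no rate is known a priori, there is no time $T$ at which $\|w(T)\|_{H^1}\le K e^{-2\sigma_0 T}$ holds with $K$ independent of $T$, and the energy estimate propagates decay \emph{backward from $+\infty$}, not forward from $t^*$, so smallness at $t^*$ cannot initialize it. The repair is one line: quote the exponential convergence rate of \cite{Ma2}, already recalled in the introduction of the paper. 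A secondary, non-fatal difference: you remove the degenerate directions by modulating the positions and scales of the solitons in $V$; the paper instead keeps $V$ fixed, controls the scaling directions $(w,R_j)$ directly from the equations (Claim \ref{cl:ortho1}) and eliminates the translation directions by projecting $w$ itself, $\tilde w=w+\sum_j a_j(R_j)_x$. Modulation could be made to work, but it forces you to prove in addition that the modulated parameters return to the fixed asymptotic parameters of $V$ at the rate $e^{-2\sigma_0 t}$, a complication the paper's projection argument avoids.
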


\begin{proof} 
{Proof of (i).} The proof is similar to the one of Proposition 6 in \cite{Ma2}, so we only sketch it.
  Let  $$w(t)=u(t)-V(t).$$ 
  Let
$$
R(t,x) = \sum_{j=1}^N R_j(t,x),\quad R_j(t,x)= Q(x-c_j t-\Delta_j).
$$
On the one hand, by   Theorem 1 in \cite{Ma2}, $u(t)\in H^3$, and there exists $\bar \sigma>0$ such that
$$
 \|u(t)- R(t)\|_{H^3} \leq C e^{-\bar \sigma t}.
$$
On the other hand, by \eqref{eq:bar}, 
\be\label{eq:VR}
\|V(t)-R(t)\|_{H^3} \leq C e^{-\sigma_0 t}.
\ee
Thus,    for some $\sigma>0$,
\be\label{wpetit}
\|w(t)\|_{H^3} \leq C e^{-\sigma t}.
\ee

Next, note that $w$ satisfies the following equation
\be\label{eq:w}
w_t + (w_{xx} + (V+w)^4 - V^4)_x +E(V) =0,
\ee
where   $E(V)=\partial_t V + \partial_x(\partial_x^2 V + V^4)$. 
Define 
\begin{align*}
& \mathcal{F}(t)  = \int \left\{ \left(  w_x^2 - \frac 25   \left((V+w)^5 - V^5 - 5 w V^4\right)\right)(t) f(t) + w^2(t)\right\} \\&\hbox{where} \quad
f(t,x)   = \frac 1{c_N} - \sum_{j=1}^{N-1}\left(\frac 1 {c_{j+1}} - \frac 1{c_{j}}\right) \phi\left( \sqrt{\sigma} \left(x- \frac {c_{j+1}+c_{j}}2  t - \frac {\Delta_{j+1}+\Delta_{j}}2\right) \right)
\end{align*}
and  
$$ \sigma = \frac 14 \min(c_1-c_2,  \ldots,c_{N-1}-c_N ,c_N).$$
We claim
\begin{claim}[Energy estimate]\label{cl:F22}
There exist $C, \sigma_1>0$ such that, for $t\geq 0$,
\be\label{eq:F}
\mathcal F(t) \leq C e^{-\sigma_1 t} \sup_{t'\geq t} \|w(t')\|_{H^1}^2 + C e^{-2 \sigma_0 t}  \sup_{t'\geq t} \|w(t')\|_{H^1}.
\ee
\end{claim}
\begin{proof}[Sketch of the proof of Claim \ref{cl:F22}] 
The proof is similar to the one of Lemma 4 \cite{Ma2}. The only difference is the presence of the error term $E(V)$ in \eqref{eq:w}, which generates the second term in the right-hand side of \eqref{eq:F}.

The proof relies on the following estimate of the time derivative of $\mathcal F$: for some $\sigma_1>0$,
\be\label{pf}
\frac {d \mathcal F}{dt}(t) \geq - C e^{-\sigma_1 t} \|w\|_{H^1}^2 -
C e^{-2\sigma_0 t} \|w\|_{L^2}.
\ee
Integrating \eqref{pf} on $[t,+\infty)$, since $\lim_{t\to \infty} \mathcal F(t)=0$ (by \eqref{wpetit}), Claim \ref{cl:F22} is proved.

\medskip

The proof of \eqref{pf} is omitted (see \cite{Ma2}).
We only recall that a key step of the proof is the following property of $V$
\be\label{eq:64}
\|V_t f + V_x\|_{L^\infty} \leq C e^{-\sigma_1 t},
\ee
for some $\sigma_1>0$, easily proved using \eqref{eq:VR} and \eqref{eq:eqV}.
\end{proof}

Next, we claim without proof the following direct consequence of the equations of $w$ and   $Q_{c_j}$.
\begin{claim}[Control of the scaling directions]\label{cl:ortho1}
\be \sum_{j=1}^N \left| \int w(t) R_j(t)\right|  
 \leq C e^{-\sigma_1 t} \sup_{t'\geq t} \|w(t')\|_{H^1}+ C e^{-2\sigma_0t}. 
\ee
\end{claim}

 We now control the translation directions and conclude the proof. Let 
 $$
 \tilde w(t) = w(t) + \sum_{j=1}^N a_j(t) (R_j)_x(t), \ \ 
 a_j(t)= - \frac {\int w(t) (R_j)_x(t)}{\int (R_j)_x^2(t)},\ \
 \int \tilde w(t) (R_j)_x(t) =0,
 $$
 \be\label{compw}
 C_1 \|w(t)\|_{H^1}^2 \leq \|\tilde w(t)\|_{H^1}^2 + \sum_{j=1}^N |a_j(t)|^2\leq C_2 \|w(t)\|_{H^1}^2.
 \ee
We claim  the following result, based on the equations of $(R_j)_x$, 
Claims \ref{cl:F22} and  \ref{cl:ortho1}, 
as well as a coercivity property of $\mathcal F$ up to scaling and translation.
\begin{claim}
For $t>0$, for some $\sigma_1>0$,
\be\label{cl:fi}
	\|\tilde w(t)\|_{H^1}^2 + \sum_{j=1}^N |a_j(t)|^2\leq
	C e^{-\sigma_1 t} \sup_{t'\geq t} \|w(t')\|_{H^1}^2 + Ce^{-4\sigma_0 t}.\ee
\end{claim} 
 From \eqref{cl:fi} and  \eqref{compw}, we obtain 
 $$\|w(t)\|_{H^1}^2 \leq C e^{-\sigma_1 t} \sup_{t'\geq t} \|w(t')\|_{H^1}^2 +C e^{-4\sigma_0 t}  ,$$ and thus, for $t$ large enough,
 $$\frac 12 \|w(t)\|_{H^1}  \leq  Ce^{-2\sigma_0 t} ,$$
 which completes the proof of part (i) of Proposition \ref{le:dist}.

\medskip
 
\noindent{Proof of (ii).} Lower bound.
From \eqref{eq:lowV}  and \eqref{cp}, for $t>t_1(K_0)$,
\begin{align*}
|u(t,x_0(t)) |
	&\geq |V(t,x_0(t))| - |u(t,x_0(t))-V(t,x_0(t))|\\
	& \geq |V(t,x_0(t))| - \|u(t)-V(t)\|_{H^1}\\ 	
	& \geq  \kappa e^{ \gamma_0 K_0} e^{-2 \sigma_0 t} - C e^{-2\sigma_0 t} \geq \frac \kappa2 e^{ \gamma_0 K_0} e^{-2 \sigma_0 t},
\end{align*}
for $K_0$ large enough.
\end{proof}

\appendix
\section{Proof of Lemma \ref{le:mono} and Claim \ref{lun}}\label{ap:B}
\subsection{Proof of Lemma \ref{le:mono}}
 
For $x_0>0$, $t\in [t_1,t_2]$, set the following energy and mass   Liapunov functional:
\begin{align*}
& J_{x_0}(t) = \int \Big( u_x^2 + u^2 - \frac 25 u^5 \Big)(t)\, \psi(t) dx\\
& \hbox{where} \quad \psi(t,x)=\phi\big( \sqrt{\sigma} (x{-}c_0 t {-} (c_0{-}\sigma') (t_2{-}t) {-} x_0)\big) 
\end{align*}
Estimate \eqref{eq:mono4} is based on 
the control of the variation of $J_{x_0}$ on $[t_1,t_2]$.
We claim 
\be\label{surJ}
\frac d{dt} J_{x_0}(t)\leq C e^{-\sqrt{\sigma} (c_0-\sigma') (t_2-t)} e^{-\sqrt{\sigma} x_0} .
\ee
Indeed, we have by direct computations (see e.g. Appendix C in \cite{MMnonlinearity}), 
\begin{align*}
  \frac d{dt} J_{x_0}(t)& = \int \left( - (u_{xx}+u^4)^2 - 2 u_{xx}^2 -3 u_x^2 + 8 u_x^2 u^3 
+ \frac 85 u^5\right) \psi_x \\ &-\sigma' \int  \big( u_x^2 + u^2 - \frac 25 u^5 \big) \psi_x + \int (u_x^2+ u^2) \psi_{xxx}.\end{align*}
Thus, using \eqref{phi3},
\begin{align*}
\frac d{dt} J_{x_0}(t) & \leq \int \left( - (u_{xx}+u^4)^2 - 2 u_{xx}^2 -3 u_x^2 + 8 u_x^2 |u|^3 
+ \frac 25 (4+C_0) |u|^5\right) \psi_x\\&
- (\sigma'-\sigma) \int \big( u_x^2 + u^2\big)\psi_x.
\end{align*}
Note first that $ \|u\|_{L^\infty(x>c_0 t+R)}^2\leq \|u_x\|_{L^2(x>c_0 t+R) } \|u\|_{L^2(x>c_0 t+R) }\leq
C_0 \alpha_0$.
Next,   observe that
\begin{align*}
    \int u_x^2 |u|^3 \psi_x   
  &\leq   \int_{x<c_0 t  +R} u_x^2 |u|^3 \psi_x +    \int_{x>c_0 t  +R} u_x^2 |u|^3 \psi_x
\\ & \leq C \phi'\left( \sqrt{\sigma}  (R{-} (c_0{-}\sigma') (t_2{-}t) {-} x_0)\right)
+ \|u\|_{L^\infty(x>c_0 t+R) }^3\int u_x^2\psi_x
\\ &\leq C e^{-\sqrt{\sigma} (c_0-\sigma') (t_2-t)} e^{-\sqrt{\sigma} x_0}
+C \alpha_0^{\frac 32}  \int u_x^2\psi_x,\end{align*}
and similarly,
$$
\int |u|^5 \psi_x \leq C e^{-\sqrt{\sigma} (c_0-\sigma') (t_2-t)} e^{-\sqrt{\sigma} x_0}
+C \alpha_0^{\frac 32} \int u^2\psi_x.
$$
Estimate \eqref{surJ} follows, for $\alpha_0$ small enough (depending on $\sigma$, $\sigma'$, $c_0$, $C_0$).\\

Integrating \eqref{surJ} on $[t_1,t_2]$, we get
$$
J_{x_0}(t_2)-J_{x_0}(t_1) \leq e^{-\sqrt{\sigma} x_0}.
$$
We control the nonlinear term in $J_{x_0}(t)$ as before:
\begin{align*}
& \int |u|^5 \psi   \leq   \int_{x<c_0 t  +R}   |u|^5 \psi  +    \int_{x>c_0 t  +R}  |u|^5 \psi \\ & \leq C \phi\left( \sqrt{\sigma}  (R{-} (c_0{-}\sigma') (t_2{-}t) {-} x_0)\right)
+ \|u\|_{L^\infty(x>c_0 t+R) }^3\int u^2\psi  \\
& \leq C e^{-\sqrt{\sigma} (c_0-\sigma') (t_2-t)} e^{-\sqrt{\sigma} x_0}
+C \alpha_0^{\frac 32}  \int u^2\psi 
  \leq C   e^{-\sqrt{\sigma} x_0}
+C \alpha_0^{\frac 32}  \int u^2\psi .
\end{align*}
Thus, for $\alpha_0$ small enough,
\begin{align*}
& J_{x_0}(t_2) 
  \geq  \int  \left(u_x^2+u^2    -C |u|^5\right) (t_2)  \psi \\
& \geq \int \left(u_x^2+u^2   \right) (t_2)  \psi  - C   e^{-\sqrt{\sigma} x_0}
-C \alpha_0^{\frac 32}  \int u^2(t_2)\psi 
\\ & \geq \frac 34  \int  (u_x^2+u^2  )(t_2,x) \phi\left(\sqrt{\sigma} (x-c_0 t_2 -x_0) \right)dx
 - C   e^{-\sqrt{\sigma} x_0};
\end{align*}
\begin{align*}
& J_{x_0}(t_1) 
  \leq  \int  \left(u_x^2+u^2    +C |u|^5\right) (t_1)  \psi \\
& \leq \int \left(u_x^2+u^2   \right) (t_1)  \psi  + C   e^{-\sqrt{\sigma} x_0}
+C \alpha_0^{\frac 32}  \int u^2(t_1)\psi 
\\ & \leq \frac 32  \int  (u_x^2+u^2  )(t_1,x) \phi\left(\sqrt{\sigma} (x-c_0 t_1{-} (c_0{-}\sigma') (t_2{-}t_1)-x_0) \right)dx
+ C   e^{-\sqrt{\sigma} x_0}.
\end{align*}

Combining these estimates, we get
\begin{align*}
& \int (u_x^2+u^2  )(t_2,x) \phi\left(\sqrt{\sigma} (x-c_0 t-x_0) \right)dx
\\& \leq
2 \int \big( u_x^2 + u^2   \big)(t_1,x)\, \phi\big( \sqrt{\sigma} (x{-}c_0 t_1{-} (c_0{-}\sigma') (t_2{-}t_1) {-} x_0)\big) dx + C   e^{-\sqrt{\sigma} x_0}.
\end{align*}

\subsection{Proof of Lemma \ref{lun}}
The proof of Lemma \ref{lun} is based on the three conservation laws, mass \eqref{mass}, energy \eqref{energy}  and integral \eqref{int}.

Recall that
$$
\int Q_c^2 = c^{\frac 1{6}} \int Q^2, \quad 
E(Q_c) = c^{\frac 7 {6}} E(Q), \quad \int Q_c = \frac 1{c^{\frac 16}} \int Q,
$$

Let $N\geq 2$, $0<c_N<\ldots<c_1$ and $\Delta_1,\Delta_2,\ldots,\Delta_N\in \RR$.
Let $u(t)$ be the solution of \eqref{kdv} satisfying \eqref{pinfdeux}. Let
$$w(t) = u(t)- \sum_{j=1}^N Q_{c_j} (.- c_j t  -\Delta_j).$$
By the uniqueness result in \cite{Ma2}, we have, for some $C, \ \gamma>0$,
\begin{equation}\label{pinftrois}
 \| w(t)\|_{H^1} \leq Ce^{-\gamma t}.
\end{equation}

First, as in Step 1 of the proof of Lemma \ref{le:droiteN}, using Lemma \ref{le:mono} 
and \eqref{pinftrois},  
\be\label{dd}
\forall x_0>0,\ \forall t>0,\quad
\int_{x<- x_0+ c_N t} u^2(t,x) dx \leq C e^{-\sqrt{\frac {c_N} 2} x_0}.
\ee
Second, following the proof of  Lemma 7.4 in \cite{Munoz}, using the exponential decay in time \eqref{pinftrois}, there exists $C>0$ such that 
\be\label{gg}
\forall t>0,\quad
\int_{x> c_1 t} (x - c_1t)^2 u^2(t,x) dx \leq C
\ee
By the exponential decay properties of $Q_{c_j}$, we deduce from \eqref{dd} and \eqref{gg} that
\be\label{dg}
\forall t>0, \quad
\int_{x<c_Nt} (x-c_N t)^2 w^2(t,x) dx + \int_{x> c_1 t} (x - c_1t)^2 w^2(t,x) dx \leq C.
\ee
From this, we deduce easily that $\int |w(t)|\to 0$ as $t\to +\infty$.
Indeed,
\begin{align*}
&  \int |w(t)|  \leq \int_{x<\frac 12 c_Nt} |w(t)| + \int_{\frac 12 c_Nt<x<\frac 32 c_1t} |w(t)| + \int_{x>\frac 32c_1t} |w(t)| \\
& \leq  C \left(\int_{x<\frac 12 c_Nt} (x{-}c_N t)^{\frac 32} w^2(t)  \right)^{\frac 12}+ C t^\frac 12 \left(\int |w(t)|^2 \right)^{\frac 12}+ 
C \left( \int_{x>\frac 32c_1t} (x {-} c_1t)^{\frac 32} w^2(t)\right)^{\frac 12}\\
& \leq C t^{-\frac 14} + C e^{-\frac \gamma 4 t}.
\end{align*}
\section{Proof of Claim \ref{le:B}}\label{ap:A}
 
Set $D = \frac B {Q^2}$, $E=D Q^{\frac 32} = B Q^{-\frac 12}$, $F = E Q^{\frac 32} = D Q^3= BQ$.
In particular,
$$
\int B Q^{\frac 52}  = \int E Q^{3}  = 0.
$$

 First, we claim:
\begin{claim}\label{cl:comp}
\begin{align}
&\int L(B') B \frac {Q'}{Q^2}=\int (D')^2 (\frac 32 Q^3 + \frac 3{10} Q^6) - \int D^2 (3 Q^3 - \frac {21}5Q^6 +\frac 65 Q^9), \label{LBB}\\
& \int (D')^2 Q^3 = \int (E')^2 + \frac 94 \int E^2 - \frac 95 \int E^2 Q^3,\label{DQ3} \\
& \int (D')^2 Q^6 = \int (F')^2 + 9 \int F^2 - \frac {27}5 \int F^2 Q^3.\label{DQ6}
\end{align}
\end{claim}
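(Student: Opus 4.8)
The plan is to prove all three identities by repeated integration by parts, the only external inputs being the profile equation $Q''=Q-Q^4$, its first integral $(Q')^2=Q^2-\frac25 Q^5$ (both constants of integration vanishing by decay), and the fact that every boundary term produced along the way vanishes thanks to the decay bounds \eqref{decayB0}. No use of the orthogonality conditions \eqref{orthoB0} is needed here: Claim \ref{cl:comp} is purely an algebraic rewriting, and orthogonality will only enter afterwards, when deriving the coercivity lower bound of Claim \ref{le:B}. The two computations that make everything collapse to the stated coefficients are the clean identities $\left(\frac{Q'}{Q}\right)'=\frac{Q''}{Q}-\frac{(Q')^2}{Q^2}=(1-Q^3)-(1-\frac25 Q^3)=-\frac35 Q^3$ and $\frac{(Q')^2}{Q^2}=1-\frac25 Q^3$.

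I would treat \eqref{DQ3} and \eqref{DQ6} first, as they share the same mechanism. For \eqref{DQ3} I write $D=EQ^{-3/2}$, so that $(D')^2Q^3=(E')^2-3E'E\,\frac{Q'}{Q}+\frac94 E^2\frac{(Q')^2}{Q^2}$; the cross term is handled by $E'E=\frac12(E^2)'$ and one integration by parts, which turns $-3\int E'E\frac{Q'}{Q}$ into $\frac32\int E^2\left(\frac{Q'}{Q}\right)'=-\frac9{10}\int E^2Q^3$, while $\frac{(Q')^2}{Q^2}=1-\frac25 Q^3$ turns the last term into $\frac94\int E^2-\frac9{10}\int E^2Q^3$. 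Adding gives exactly $\int(E')^2+\frac94\int E^2-\frac95\int E^2Q^3$. The identity \eqref{DQ6} is identical with $D=FQ^{-3}$, $(D')^2Q^6=(F')^2-6F'F\frac{Q'}{Q}+9F^2\frac{(Q')^2}{Q^2}$, and the same two substitutions produce $\int(F')^2+9\int F^2-\frac{27}5\int F^2Q^3$.

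The real work is \eqref{LBB}. I would expand $L(B')=-B'''+B'-4Q^3B'$ and integrate by parts against the weight $Q'/Q^2$ to move all derivatives off the cubic term, reducing the left side to
\[
-\tfrac32\int (B')^2\Big(\tfrac{Q'}{Q^2}\Big)' +\tfrac12\int B^2\Big(\tfrac{Q'}{Q^2}\Big)''' -\tfrac12\int B^2\Big(\tfrac{Q'}{Q^2}\Big)' +2\int B^2\big((Q')^2+Q^2-Q^5\big),
\]
where the last term comes from $-4\int Q^3B'B\frac{Q'}{Q^2}=2\int B^2(QQ')'$ and $(QQ')'=(Q')^2+Q^2-Q^5$. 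Here the key simplification is that, using the two master identities, the weight derivatives are elementary: $\left(\frac{Q'}{Q^2}\right)'=-\frac1Q-\frac15 Q^2$, then $\left(\frac{Q'}{Q^2}\right)''=\frac{Q'}{Q^2}-\frac25 QQ'$ and $\left(\frac{Q'}{Q^2}\right)'''=-\frac1Q-Q^2+\frac{14}{25}Q^5$. Finally I substitute $B=DQ^2$, $B'=D'Q^2+2DQQ'$, so $(B')^2=(D')^2Q^4+4D'DQ^3Q'+4D^2Q^2(Q')^2$; integrating the single cross term $4D'DQ^3Q'(\cdots)=2(D^2)'(\cdots)$ by parts once more removes all first derivatives of $D$, and collecting the polynomial weights yields $(D')^2(\frac32 Q^3+\frac3{10}Q^6)$ together with exactly $-D^2(3Q^3-\frac{21}5 Q^6+\frac65 Q^9)$.

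The main obstacle is purely the bookkeeping in \eqref{LBB}: three nested integrations by parts, the extra integration by parts of the $D'D$ cross term, and the need to keep track of which of the four reduced integrals contributes to each of the monomials $Q^3,Q^6,Q^9$ in the $D^2$ part. Since the $(D')^2$ coefficients $\frac32 Q^3+\frac3{10}Q^6$ already emerge from the $-\frac32\int(B')^2(\frac{Q'}{Q^2})'$ term alone, the only delicate check is that the $Q^6$ and $Q^9$ coefficients of the $D^2$ part, assembled from all four pieces, collapse to $\frac{21}5$ and $-\frac65$; this is where arithmetic slips are most likely, and where the vanishing of each boundary term must be confirmed from \eqref{decayB0}.
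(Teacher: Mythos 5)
Your proposal is correct. I verified the key intermediate facts: the weight derivatives $\bigl(\tfrac{Q'}{Q^2}\bigr)'=-\tfrac1Q-\tfrac15Q^2$ and $\bigl(\tfrac{Q'}{Q^2}\bigr)'''=-\tfrac1Q-Q^2+\tfrac{14}{25}Q^5$, the four-term reduction of $\int L(B')B\tfrac{Q'}{Q^2}$, the collection of the $D^2$ coefficients (which indeed come out as $-3$, $+\tfrac{21}5$, $-\tfrac65$ for $Q^3$, $Q^6$, $Q^9$), and your derivations of \eqref{DQ3} and \eqref{DQ6}; you are also right that the orthogonality conditions \eqref{orthoB0} play no role in this claim.

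Your route through \eqref{LBB} is organized differently from the paper's, though both are integration-by-parts arguments resting on the same two profile identities $Q''=Q-Q^4$ and $(Q')^2=Q^2-\tfrac25Q^5$. The paper substitutes $B=DQ^2$ at the outset, rewrites $\int L(B')B\tfrac{Q'}{Q^2}=\int (DQ^2)'\,L(DQ')$ by self-adjointness of $L$, and exploits $LQ'=0$ so that $L(DQ')$ collapses to $-2D'Q''-D''Q'$; one integration by parts in $D$ then produces the compact weights $-\tfrac32 Q^2Q''+3Q(Q')^2$ for $(D')^2$ and $-\bigl((Q')^3\bigr)'$ for $D^2$, which the profile identities turn into the stated polynomials. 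You never invoke the symmetry or the kernel of $L$: you expand $L(B')=-B'''+B'-4Q^3B'$, push all derivatives onto the weight $Q'/Q^2$, and substitute $B=DQ^2$ only at the end. This costs more bookkeeping (the $D^2$ coefficients are assembled from five sources rather than from the single derivative $-\bigl((Q')^3\bigr)'$), but it is entirely mechanical, needs no structural input about $L$, and makes the $(D')^2$ coefficient $\tfrac32 Q^3+\tfrac3{10}Q^6$ visible from a single term. For \eqref{DQ3}--\eqref{DQ6} the two arguments coincide up to direction: the paper proves the general identity \eqref{genebeta} for $(DQ^{1+\beta})'$ and specializes $\beta=\tfrac12,2$, whereas you substitute $D=EQ^{-3/2}$ and $D=FQ^{-3}$ and integrate the cross term by parts --- the same computation read backwards.
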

Proof of \eqref{LBB}.
\begin{align*}
&\int L(B') B \frac {Q'}{Q^2}= \int (D Q^2 )' L(D Q') =
\int (2 D QQ' + D'Q^2) (D LQ' - 2 D'Q'' - D'' Q') \\
&=\int (D')^2 ( -2  Q^2Q'' + \frac 12 (Q^2Q')' + 2Q(Q')^2)
+ \int D^2 (2 (QQ'Q'')' - (Q(Q')^2)'')\\
& = \int (D')^2 (-\frac 32 Q^2 Q'' + 3 Q (Q')^2) - \int D^2 ((Q')^3)'\\
& =\int (D')^2 (\frac 32 Q^3 + \frac 3{10} Q^6) - \int D^2 (3 Q^3 - \frac {21}5Q^6 +\frac 65 Q^9).
\end{align*}

Proof of \eqref{DQ3}--\eqref{DQ6}. Let $\beta>0$.
\begin{align}
 \int \left[(D Q^{1+\beta})'\right]^2 & =  \int (D')^2 Q^{2 (1+\beta)} + (1+\beta)^2 \int D^2 (Q')^2 Q^{2 \beta} 
+ 2(1+\beta) \int D D'  Q' Q^{1+2 \beta} \nonumber\\
& = \int (D')^2 Q^{2 (1+\beta)} + D^2 \left((1+\beta)^2 (Q')^2 Q^{2 \beta} - (1+\beta) (Q'Q^{1+2 \beta})'  \right) \nonumber\\
& = \int (D')^2 Q^{2(1+\beta)} + D^2 \left(- \beta (1+\beta) (Q')^2 Q^{2 \beta} 
- (1+\beta) Q'' Q^{1+2 \beta}\right)\nonumber\\
&= \int (D')^2 Q^{2(1+\beta)} + (1+\beta) D^2 \left(- (1+\beta)    Q^{2+2 \beta} 
+ \frac {2\beta+5}{5}  Q^{5+2 \beta}\right).\label{genebeta}
\end{align}
Thus, applied to $\beta=\frac 12 $ and $\beta = 2$,
$$
\int (D')^2 Q^3 =  \int (E')^2 + \frac 94 \int E^2 - \frac 95 \int E^2 Q^3
$$
$$ \int (D')^2 Q^6 =   \int (F')^2 + 9 \int F^2 - \frac {27}5 \int F^2 Q^3.
$$

Note also that by definition of $E$ and $F$:
\begin{equation}\label{EF}
\int E^2 Q^3 = \int F^2.
\end{equation}

End of the proof of Claim \ref{le:B}.
We combine \eqref{LBB}-\eqref{DQ3}-\eqref{DQ6}-\eqref{EF} to write $\int L(B') B \frac {Q'}{Q^2}$
as a sum of two nonnegative quadratic forms in $E$ and $F$.
\begin{align*}
 \int L(B') B \frac {Q'}{Q^2}& = \frac 32 \left( \int (E')^2 +  \frac 94 \int E^2 - \frac 95 \int E^2 Q^3\right)\\
&+ \frac 3{10} \left( \int (F')^2 + 9 \int F^2 - \frac {27}{5} \int F^2 Q^3 \right)\\
&- 3 \int E^2  + \frac {21}{5} \int F^2 - \frac 65 \int F^2 Q^3 \\
& = \frac 38 \int E^2 + \frac 32 \left( \int (E')^2 - \frac {27} 5  \int E^2Q^3\right)\\
&+ \frac 3{10} \left( \int (F')^2 + 41 \int F^2- \frac {47}5 \int F^2 Q^3 \right).
\end{align*}

We claim

\begin{claim}\label{cl:pos}
\be\label{forF}
\int (F')^2 + \frac {35} 2  \int F^2- \frac {47}5 \int F^2 Q^3 \geq 0
\ee
\be\label{forE}
\int (E')^2 - \frac {27}5 \int E^2Q^3\geq 0,
\ee
\end{claim}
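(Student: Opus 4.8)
The plan is to read each inequality as a nonnegativity statement for a one–dimensional Schr\"odinger quadratic form whose potential is a multiple of $Q^3=\tfrac52\cosh^{-2}(\tfrac32 x)$, and to exploit the two exact relations $Q''=Q-Q^4$ and $(Q')^2=Q^2-\tfrac25 Q^5$ coming from $Q''+Q^4=Q$. The computational engine behind both steps is the identity, valid for any $\alpha$,
\[
(Q^\alpha)''=\alpha^2 Q^\alpha-\tfrac{\alpha(2\alpha+3)}{5}\,Q^{\alpha+3},
\]
obtained by substituting the two relations above.

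For \eqref{forF} I would argue \emph{unconditionally}, by a positive supersolution (ground–state) factorization. Taking $\phi=Q^4>0$, the identity gives
\[
-\phi''+\tfrac{35}{2}\phi-\tfrac{47}{5}Q^3\phi=Q^4\bigl(\tfrac32-\tfrac35 Q^3\bigr)\ge 0,
\]
where the sign uses only $Q^3\le Q^3(0)=\tfrac52$. Writing $F=\phi g$ and integrating by parts (the decay in \eqref{decayB0} removes all boundary terms), one obtains
\[
\int (F')^2+\tfrac{35}{2}\int F^2-\tfrac{47}{5}\int F^2 Q^3=\int Q^8(g')^2+\int Q^8\bigl(\tfrac32-\tfrac35 Q^3\bigr)g^2,
\]
and both terms are manifestly $\ge 0$. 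Thus \eqref{forF} holds for all $F$, with no orthogonality required.

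Inequality \eqref{forE} is genuinely different, since the operator $-\partial_x^2-\tfrac{27}{5}Q^3$ has negative spectrum: with $\alpha=3$ one has $(Q^3)''=9Q^3-\tfrac{27}{5}Q^6$, hence $-(Q^3)''-\tfrac{27}{5}Q^6=-9Q^3$, so the form evaluated at $E=Q^3$ equals $-9\int Q^6<0$. No pure supersolution can work, and the orthogonality conditions carried by $E=BQ^{-1/2}$ must enter. The key observation is that after $u=\tfrac32 x$, using $\tfrac{27}{5}Q^3=\tfrac{27}{2}\cosh^{-2}u$, the operator becomes $\tfrac94\bigl(-\partial_u^2-6\cosh^{-2}u\bigr)$, the reflectionless P\"oschl--Teller operator with $\ell=2$. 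Its continuous spectrum is $[0,\infty)$ and it has \emph{exactly} two bound states, at energies $-4$ and $-1$ (i.e. $-9$ and $-\tfrac94$ in the $x$ variable), with eigenfunctions $\cosh^{-2}u\propto Q^3$ and $\cosh^{-1}u\tanh u\propto (Q^{3/2})'$, both checked by direct substitution. Consequently the form $\int(E')^2-\tfrac{27}{5}\int E^2 Q^3$ is $\ge 0$ for every $E$ in the $L^2$--orthogonal complement of $\{Q^3,(Q^{3/2})'\}$, and it remains only to verify, using \eqref{orthoB0}, that $E$ lies there:
\[
\int E\,Q^3=\int B\,Q^{5/2}=0,\qquad \int E\,(Q^{3/2})'=\tfrac32\int B\,Q'=0,
\]
which yields \eqref{forE}.

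The main obstacle is precisely the spectral count in \eqref{forE}: the two orthogonality conditions suffice \emph{only} because the operator has exactly two negative eigenvalues and none in $(-\tfrac94,0)$. I would secure this from the explicit solvability of the $\ell=2$ P\"oschl--Teller potential; a self-contained substitute is Sturm oscillation theory, observing that the two exhibited eigenfunctions have $0$ and $1$ zeros (forcing them to be the first and second states) together with the standard bound on the number of bound states for this potential. A minor point to keep rigorous is the convergence of all weighted integrals and the vanishing of boundary terms in the factorization for \eqref{forF}, both of which follow from the decay estimates \eqref{decayB0}.
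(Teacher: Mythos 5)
Your proof is correct and follows essentially the same route as the paper: your ground-state factorization with $\phi=Q^4$ for \eqref{forF} is precisely the alternative the paper points out can be read off from \eqref{genebeta} (combined with the same pointwise bound $\tfrac35 Q^3\le \tfrac32$), and for \eqref{forE} the paper likewise relies on the fact (cited from Titchmarsh) that the operator in \eqref{op} with $\beta=3$ has exactly the two bound states $Q^3$ and $Q'Q^{1/2}\propto (Q^{3/2})'$, concluding via the orthogonality conditions \eqref{orthoB0} exactly as you do. Your identification of the operator as the $\ell=2$ P\"oschl--Teller operator after rescaling is only a change of language for the same spectral input.
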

\begin{proof}
We use standard arguments from \cite{Tit}.
For $\beta>0$, the operator
$$
w'' + \frac 15\beta(2 \beta + 3) Q^3 w
$$
has first eigenfunction $Q^\beta$ and first eigenvalue $-\beta^2$.
In particular, for $\beta=4$, it follows that
\be\label{op}
\forall w,\quad \int (w')^2 + 16 \int w^2 - \frac {44} 5 \int Q^3 w^2 \geq 0.
\ee
Note that this can also be deduced from \eqref{genebeta}.
Since $\frac 35 Q^3 \leq \frac 32$ from the expression of $Q$, we have proved \eqref{forF}.

We also know that for $\frac 32 <\beta\leq 3 $, the operator defined in \eqref{op} has exactly one other eigenfunction
$Q' Q^{\beta-\frac 52}$ with eigenvalue $-(\beta-\frac 32)^2$. In particular, with $\beta=3$,
$$
\int w Q^{3}=\int w Q' Q^{\frac 12} = 0 \quad \Rightarrow \quad \int (w')^2 - \frac {27} 5 \int w^2Q^3\geq 0.
$$
\end{proof}

In conclusion,
$$
\int L(B') B \frac {Q'}{Q^2} \geq \frac 38 \int E^2 + \frac {141}{10} \int E^2 Q^3
\geq \frac 38 \int \frac {B^2}{Q} + \frac {141}{20} \int B^2 Q^2.
$$

\section{Proof of Claim  \ref{cE12345}}\label{app:D}

Here is an elementary claim that we will use frequently in this proof. The proof is immediate and we omit it.
\begin{claim}\label{cl:F}
Let $F_1$ and $F_2$ be two $C^\infty$ functions such that for all $0\leq k\leq 5$, $\sigma_1^+, \sigma_1^-, \sigma_2^+, \sigma_2^->0$, $L>0$,
\be\label{hypoF1}
|F_1^{(k)}|\leq C_k e^{-\sigma_1^+ x}, \hbox{ for $x>0$,} \quad |F_1^{(k)}|\leq C_k e^{\sigma_1^- x}, \hbox{ for $x<0$.}\ee
\be\label{hypoF2}
|F_2^{(k)}|\leq C_k e^{-\sigma_2^+ (x+L)}, \hbox{ for $x>-L$,} \quad |F_2^{(k)}|\leq C_k e^{\sigma_2^- (x+L)}, \hbox{ for $x<-L$.}\ee
Then, for all $k\geq 0$,
\be\label{resultF}\begin{array}{ll}
\hbox{if $\sigma_1^-\neq \sigma_2^+$:} &\| F_1 F_2\|_{H^4} \leq C_k' e^{-\min(\sigma_1^-,\sigma_2^+) L},\\
\hbox{if $\sigma_1^-= \sigma_2^+$:} &\| F_1 F_2\|_{H^4} \leq C_k' L e^{- \sigma_1^-  L},\end{array}
\ee
\end{claim}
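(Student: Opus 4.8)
The plan is to reduce everything to pointwise estimates on the product and its derivatives, and then to integrate over the three natural regions determined by the two centers $x=0$ (for $F_1$) and $x=-L$ (for $F_2$), namely $(-\infty,-L)$, $(-L,0)$ and $(0,+\infty)$. First I would observe that by the Leibniz rule $(F_1F_2)^{(k)}=\sum_{j=0}^k\binom kj F_1^{(j)}F_2^{(k-j)}$ for $0\le k\le 4$, so it suffices to bound each $\|F_1^{(j)}F_2^{(k-j)}\|_{L^2}$; since \eqref{hypoF1}--\eqref{hypoF2} hold for all derivatives up to order $5$, every factor $F_1^{(j)}$ and $F_2^{(k-j)}$ obeys exactly the same exponential profile as $F_1$ and $F_2$ (with a possibly different constant), so it is enough to estimate the model product $F_1F_2$ itself.

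On the two outer regions the estimate is immediate. For $x>0$ one has $x>-L$, so \eqref{hypoF1}--\eqref{hypoF2} give $|F_1F_2|\le C e^{-\sigma_1^+x}e^{-\sigma_2^+(x+L)}\le C e^{-\sigma_2^+L}e^{-(\sigma_1^++\sigma_2^+)x}$, whose $L^2$ norm on $(0,+\infty)$ is $\le C e^{-\sigma_2^+L}$. For $x<-L$ one has $x<0$, so $|F_1F_2|\le Ce^{\sigma_1^-x}e^{\sigma_2^-(x+L)}$, whose $L^2$ norm on $(-\infty,-L)$ is $\le Ce^{-\sigma_1^-L}$. Since $\min(\sigma_1^-,\sigma_2^+)\le\sigma_2^+$ and $\min(\sigma_1^-,\sigma_2^+)\le\sigma_1^-$, both contributions are bounded by $Ce^{-\min(\sigma_1^-,\sigma_2^+)L}$.

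The only point requiring care is the middle region $(-L,0)$, where $x<0$ and $x>-L$ simultaneously, so that $|F_1F_2|\le Ce^{\sigma_1^-x}e^{-\sigma_2^+(x+L)}=Ce^{-\sigma_2^+L}e^{(\sigma_1^--\sigma_2^+)x}$. Squaring and integrating, the whole question reduces to estimating $\int_{-L}^0 e^{2(\sigma_1^--\sigma_2^+)x}\,dx$. Here the dichotomy of \eqref{resultF} appears. If $\sigma_1^-\neq\sigma_2^+$, this integral equals $\frac{1-e^{-2(\sigma_1^--\sigma_2^+)L}}{2(\sigma_1^--\sigma_2^+)}$, which is controlled by the endpoint carrying the slower decay: a short computation gives an $L^2$ bound of $Ce^{-\sigma_2^+L}$ when $\sigma_1^->\sigma_2^+$ and of $Ce^{-\sigma_1^-L}$ when $\sigma_1^-<\sigma_2^+$, i.e. $Ce^{-\min(\sigma_1^-,\sigma_2^+)L}$ in both cases. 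If instead $\sigma_1^-=\sigma_2^+$, the integrand is identically $1$ and the integral equals $L$, producing the extra polynomial factor and the bound $CL^{1/2}e^{-\sigma_1^-L}\le CLe^{-\sigma_1^-L}$.

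Combining the three regions and summing the finitely many Leibniz terms yields \eqref{resultF}. There is no genuine obstacle here --- the estimate is elementary, as the authors indicate --- and the only thing to watch is the resonant case $\sigma_1^-=\sigma_2^+$ in the overlap region, where the two exponential rates cancel exactly and one must retain the linear factor in $L$ coming from the length of the integration interval.
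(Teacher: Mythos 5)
Your proof is correct, and it is precisely the elementary argument the paper has in mind: the paper omits the proof of Claim \ref{cl:F} entirely (``the proof is immediate and we omit it''), and the intended reasoning is exactly your Leibniz expansion combined with splitting $\RR$ into the three regions $(-\infty,-L)$, $(-L,0)$, $(0,+\infty)$, the dichotomy in \eqref{resultF} arising from the resonant cancellation of rates on the middle interval. One cosmetic remark: your last inequality $L^{1/2}e^{-\sigma_1^-L}\le L e^{-\sigma_1^-L}$ requires $L\ge 1$, but this is harmless, since the claim is only invoked with $L$ comparable to the soliton separations, which grow linearly in $t$ (and indeed the stated resonant bound, with its factor $L$, could not hold uniformly as $L\to 0^+$ anyway, so the statement itself implicitly assumes $L$ bounded below).
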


Since for all $p\geq 0$,  $|R_j^{(p)}|\lesssim e^{-\sqrt{c_j} (x-c_jt)}$ by Claim \ref{cl:F}, we easily check that 
for $j_k\neq j_l$,
$$
\| R_{j_1}^2 R_{j_2}^2\|_{H^4} \lesssim e^{-2 \sigma_0t},\quad
\| R_{j_1}^2 R_{j_2} R_{j_3}\|_{H^4} \lesssim e^{-3 \sigma_0 t},\quad
\| R_{j_1}  R_{j_2} R_{j_3} R_{j_4}\|_{H^4} \lesssim e^{-4 \sigma_0 t},
$$
and thus
$$
\|E_1\|_{H^4}\lesssim e^{-2 \sigma_0 t}.
$$
Note that for $k\geq 0$,
\begin{align*}
\forall x,\quad 
&\left| Q^{(k)}(x) -    (-{\rm sign}\, x )^{k} (10)^{\frac 13} e^{-|x|} \right| \leq C_k e^{-4|x|},  \\
& \left|Q_{c_j}^{(k)}(x) - (-{\rm sign}\, x )^{k}   (10)^{\frac 13}c_j^{\frac k2 + \frac 13} e^{-\sqrt{c_j} |x|}\right| \leq C_k e^{-4\sqrt{c_j} |x|}.\end{align*}

To estimate $E_2$, we consider two different regions in space.
Assume $k>j$. For $x>y_k:=c_k t + \Delta_k$, we have
\begin{align*}
& \left \|   \left(4 R_k(t,x)  - z_{j,k}(t)  e^{-\sqrt{c_k}(x-y_j)}\right) R_{c_j}^3 \right\|_{H^3(x>y_k)}\\
& = 4 \left\|\left ( Q_{c_k}(x-y_k) -(10)^{\frac 13} c_k^{\frac 13} e^{-\sqrt{c_k} (\Delta_j-\Delta_k)}
e^{-\sqrt{c_k}(c_j-c_k)t}e^{-\sqrt{c_k}(x-y_j)}
\right) R_j^3  \right\|_{H^3(x>y_k)}\\
& = 4 \left\|\left ( Q_{c_k}(x-y_k) -(10)^{\frac 13} c_k^{\frac 13} e^{-\sqrt{c_k} (x-y_k)}
\right) R_j^3  \right\|_{H^3(x>y_k)}\\
& \leq C \left\|e^{-4 \sqrt{c_k}(x-y_k)} e^{-3 \sqrt{c_j} |x-y_j|} \right\|_{L^2(x>y_k)} \lesssim e^{-2 \sigma_0 t}.
\end{align*}
For $x<y_k$, the following is straightforward
$$\left\|z_j  e^{-\sqrt{c_k}(x-y_k)} R_j^3 \right\|_{H^3(x<y_k)} + \left\| R_{k}    R_j^3  \right\|_{H^3(x<y_k)} 
 \lesssim e^{-2 \sigma_0 t}. 
$$
The case $k<j$ is similar. We obtain
$$
 \|E_2\|_{H^4}\lesssim e^{-2 \sigma_0 t}.
$$ 

By the definition of $z_{j,k}$, it is clear that quadratic and higher order terms in $Z$ in the expression of $E_3$ are controlled by
$e^{-2 \sigma_0 t}$, i.e.
$$
\|Z^4\|_{H^4}+ \|R Z^3\|_{H^4} + \|R^2 Z^2\|_{H^4} \leq e^{-2 \sigma_0 t}.
$$
We now consider the remaining term in $E_3$, which we can write as follows
$$
\tilde E_3 = 4 \sum_{j\neq k \atop l_1,l_2,l_3 \not \in L_j} R_{l_1}R_{l_2}R_{l_3} Z_{j,k},
$$
where $L_j = \{(l_1,l_2,l_3) \, |\, l_1=l_2=l_3=j \hbox{ or } l_1=l_2=l_3<j\}$. In the sum defining $\tilde E_3$, if $l_1\neq l_2$, then
$$
\|R_{l_1} R_{l_2} \|_{H^4} \lesssim e^{-  \sigma_0 t},
$$
and thus, the decay of $Z_{j,k}$,
$$
\|R_{l_1}R_{l_2}R_{l_3} Z_{j,k}\|_{H^4} \lesssim e^{-2 \sigma_0 t}.
$$
Therefore, we only have to consider terms such that $l_1=l_2=l_3=l> j$.
For such $j,k,l$, we have immediately, by the decay of $R_l$,
$$
\| R_l^3 Z_{j,k}\|_{H^4(x>y_j)} \lesssim e^{-2 \sigma_0 t}.
$$
For $y<y_j$, we use the space decay of $A_{j,k}$ on the left given in \eqref{bjk} and \eqref{bjkb},
$$
\| R_l^3 Z_{j,k}\|_{H^4(x<y_j)} \lesssim e^{-\sigma_0 t}\| e^{-3 \sqrt{c_l} |x|} e^{-\gamma_{j,k}^{0} |x-(c_j-c_l)t|}\|_{L^2(x<y_j-y_l)} \lesssim e^{-2 \sigma_0 t}.
$$
We have just proved
$$
\|E_3\|_{H^4} \lesssim e^{-2 \sigma_0 t}.
$$

For $E_4$, we use \eqref{bjk}, \eqref{aAjkb} and we argue as before for $E_2$.
$$
E_4 =    \sum_{j\neq k, j>l} \Big (4   Z_{j,k} - z_{j,k}^{\rm I}  e^{-  \gamma_{j,k}^{\rm I}(x-y_l)} - z_{j,k}^{\rm II}  e^{- \gamma_{j,k}^{\rm II} (x-y_l)}  \Big) R_l^3  .
$$
For $j\neq k$, $j>l$,
\begin{align*}
\left\| \Big(Z_{j,k} - z_{j,k}^{\rm I}  e^{-  \gamma_{j,k}^{\rm I}(x-y_l)} - z_{j,k}^{\rm II}  e^{- \gamma_{j,k}^{\rm II} (x-y_l)}  \Big) R_l^3\right\|_{H^4(x>y_j)}
 & \lesssim | z_{j,k}| \left\| e^{-\frac 98 (x-y_j)} e^{-3 \sqrt{c_l} |x-y_l|}\right\|_{L^2(x>y_j)}\\
 & \lesssim e^{-2 \sigma_0 t}.
\end{align*}
The estimate for $y<y_j$ is immediate and we obtain
$$
\|E_4\|_{H^4} \lesssim e^{-2 \sigma_0 t}.
$$

Finally, we consider $E_5$. As for $E_3$, it is clear that quadratic and higher order terms in $W$ in the expression of $E_5$ are controlled by $e^{-2 \sigma_0 t}$, i.e.
$$
\|(R+Z)^2 W^2\|_{H^4} + \|(R+Z) W^3 \|_{H^4} + \|W^4\|_{H^4} \lesssim e^{-2 \sigma_0 t}.
$$
Similary, terms containing products of $Z_{j,k}$ and $Z_{j,k,l}^{\rm I,II}$ are also controlled directly by the expression of $Z_{j,k}$:
$$
\|((R+Z)^3 - R^3 ) W\|_{H^4}\leq e^{-2 \sigma_0 t}.
$$
Therefore, it only remains to estimate the following term
\begin{align*}
& R^3 W - \sum_{j=2}^N \sum_{k=1\atop k\neq j}^{N}\sum_{l =1}^{j-1}  R_l^3 \left(Z_{j,k,l}^{\rm I} + Z_{j,k,l}^{\rm II}\right) \\
& = \sum_{l_1,l_2,l_3} R_{l_1}R_{l_2}R_{l_3}W - \sum_{j=2}^N \sum_{k=1\atop k\neq j}^{N}\sum_{l =1}^{j-1}  R_l^3 \left(Z_{j,k,l}^{\rm I} + Z_{j,k,l}^{\rm II}\right).
\end{align*}
In the first sum on the right-hand side term, when $l_1\neq l_2$ or $l_1\neq l_3$ or $l_2\neq l_3$, 
the corresponding term is immediately controlled:
$$
\left\|\sum_{l_1,l_2,l_3\atop l_n \neq l_{m}} R_{l_1}R_{l_2}R_{l_3}W \right\|_{H^4} \lesssim e^{-2 \sigma_0 t}.
$$
Thus, it only remains to consider terms:
$$
\sum_{l_1=1}^N  \sum_{j=2}^N \sum_{k=1\atop k\neq j}^{N} \sum_{l_2=1\atop l_2\neq l_1}^{j-1} R_{l_1}^3 \left(Z_{j,k,l_2}^{\rm I} + Z_{j,k,l_2}^{\rm II}\right).
$$
To estimate each term of this sum, we distinguish the cases $l_1>l_2$ and $l_2>l_1$.
For $l_1>l_2$, we use the estimate of $A_{j,k,l_2}^{\rm I,II}$ for $x<0$ in \eqref{bjkl}.
Indeed, 
\begin{align*}
& \left\| R_{l_1}^3 Z_{j,k,l_2}^{\rm I,II}\right\|_{H^4(x<y_{l_2})} \lesssim
|z_{j,k,l_2}^{\rm I,II}|\left\| e^{-3 \sqrt{c_{l_1}}|x|} e^{-\sqrt{c_{l_2}} |x-(c_{l_2}-c_{l_1})t|}\right\|_{L^2(x<y_{l_2}-y_{l_1})}
\lesssim e^{-2 \sigma_0 t}.
\end{align*}
and a similar estimate for $x>y_{l_2}$ is clear.

For the case $l_1<l_2$, we argue similarly, but we use the estimate in \eqref{bjkl} for $x>0$ and  
the exact expression of $z_{j,k,l_2}^{\rm I,II}$,
\begin{align*}
  \left\| R_{l_1}^3 Z_{j,k,l_2}^{\rm I,II}\right\|_{H^4(x>y_{l_2})} &\lesssim
|z_{j,k,l_2}^{\rm I,II}|\left\| e^{-3 \sqrt{c_{l_1}}|x|} e^{-(\sqrt{c_{l_2}}-\gamma_{j,k}^{\rm I,II}) (x-(c_{l_1}-c_{l_2})t)}\right\|_{L^2(x>y_{l_1}-y_{l_2})}\\
& \lesssim
e^{-\sqrt{c_k}|c_j-c_k| t} e^{-\gamma_{j,k}^{\rm I,II} (c_{l_2}-c_j) t} e^{-(\sqrt{c_{l_2}}-\gamma_{j,k}^{\rm I,II}) (c_{l_1}-c_{l_2})t}.
\end{align*}
Let $j_1$ be such that $c_{j_1}-c_{j_1+1} = \min_{j} (c_j - c_{j+1}) $.
We have 
$$(\sqrt{c_{l_2}}-\gamma_{j,k}^{\rm I,II}) (c_{l_1}-c_{l_2})\geq  (\sqrt{c_{l_2}}-\gamma_{j,k}^{\rm I,II})
(c_{j_1}-c_{j_1+1}),$$
$$
\gamma_{j,k}^{\rm I,II} (c_{l_2}-c_j) \geq \gamma_{j,k}^{\rm I,II}(c_{j_1}-c_{j_1+1}).
$$
Thus
\begin{align*}
  \left\| R_{l_1}^3 Z_{j,k,l_2}^{\rm I,II}\right\|_{H^4(x>y_{l_2})}
& \lesssim  e^{-\sqrt{c_k}|c_j-c_k| t} e^{-\sqrt{c_{l_2}}  (c_{j_1}-c_{j_1+1})t} 
\lesssim e^{-2 \sigma_0 t}. \end{align*}
The estimate for $x<y_{l_2}$ is clear for this term.

Thus, we have  proved
$$
\|E_5\|_{H^4} \lesssim e^{-2 \sigma_0 t}.
$$
 
\end{document}